\def\ps@pprintTitle{%
 \let\@oddhead\@empty
 \let\@evenhead\@empty
 \def\@oddfoot{\centerline{\thepage}}%
 \let\@evenfoot\@oddfoot}
\newtheorem{theorem}{Theorem}[section]
\newtheorem{corollary}[theorem]{Corollary}
\newtheorem{lemma}[theorem]{Lemma}
\newtheorem{proposition}[theorem]{Proposition}
\newtheorem{definition}[theorem]{Definition}
\newtheorem{remark}[theorem]{Remark}
\newtheorem{notation}[theorem]{Notation}
\newcommand{\hooklongrightarrow}{\lhook\joinrel\longrightarrow}
\newcommand{\twoheadlongrightarrow}{\relbar\joinrel\twoheadrightarrow}
\newcommand{\ra}{\rightarrow}
\newcommand{\lra}{\longrightarrow}
\newcommand{\bA}{\mathbb A}
\newcommand{\bC}{\mathbb C}
\newcommand{\bG}{\mathbb G}
\newcommand{\Q}{\mathbb Q}
\newcommand{\bR}{\mathbb R}
\newcommand{\Z}{\mathbb Z}
\newcommand{\bP}{\mathbb P}
\newcommand{\co}{\mathcal O}
\newcommand{\cH}{\mathcal H}
\newcommand{\cC}{\mathcal C}
\newcommand{\cS}{\mathcal S}
\newcommand{\cI}{\mathcal I}
\newcommand{\cW}{\mathcal W}
\newcommand{\cM}{\mathcal M}
\newcommand{\cF}{\mathcal F}
\newcommand{\cV}{\mathcal V}
\newcommand{\cU}{\mathcal U}
\newcommand{\fm}{\mathfrak{m}}
\newcommand{\ub}{\mathfrak b}
\newcommand{\ug}{\mathfrak g}
\newcommand{\ft}{\mathfrak t}
\newcommand{\sL}{\mathscr L}
\DeclareMathOperator{\la}{\mathrm la}
\DeclareMathOperator{\GL}{\mathrm GL}
\DeclareMathOperator{\et}{\text{\'et}}
\DeclareMathOperator{\Frac}{\mathrm Frac}
\DeclareMathOperator{\Sym}{\mathrm Sym}
\DeclareMathOperator{\Gal}{\mathrm Gal}
\DeclareMathOperator{\Hom}{\mathrm Hom}
\DeclareMathOperator{\End}{\mathrm End}
\DeclareMathOperator{\an}{\mathrm an}
\DeclareMathOperator{\Frob}{\mathrm Frob}
\DeclareMathOperator{\Ind}{\mathrm Ind}
\DeclareMathOperator{\unr}{\mathrm unr}
\DeclareMathOperator{\lp}{\mathrm lp}
\DeclareMathOperator{\Spm}{\mathrm Spm}
\DeclareMathOperator{\lalg}{\mathrm lalg}
\DeclareMathOperator{\cl}{\mathrm cl}
\DeclareMathOperator{\dett}{\mathrm det}
\DeclareMathOperator{\alg}{\mathrm alg}
\DeclareMathOperator{\red}{\mathrm red}
\DeclareMathOperator{\wt}{\mathrm wt}
\DeclareMathOperator{\SO}{\mathrm SO}
\DeclareMathOperator{\Tr}{\mathrm Tr}
\DeclareMathOperator{\sph}{\mathrm sph}
\DeclareMathOperator{\Fra}{\mathrm Frac}
\DeclareMathOperator{\val}{\mathrm val}
\DeclareMathOperator{\sm}{\mathrm sm}
\begin{document}
\title{A note on critical $p$-adic $L$-functions}
\author{Yiwen Ding \\ yiwen.ding@bicmr.pku.edu.cn}
\date{}
\maketitle\begin{abstract}
We study the adjunction property of the Jacquet-Emerton functor in certain neighborhoods of critical points in the eigencurve. As an application, we construct two-variable $p$-adic $L$-functions around critical points via Emerton's representation theoretic approach.
\end{abstract}
\numberwithin{equation}{section}

\tableofcontents
\section{Introduction}\addtocontents{toc}{\protect\setcounter{tocdepth}{1}}
Let $N\in \Z_{>1}$, $p\nmid N$, $k\in \Z_{\geq 0}$, and $f$ be a classical newform of level $\Gamma_1(N)$ of weight $k+2$ over $E$ (which is a finite extension of $\Q_p$ sufficiently large). Let $a_p$ (resp. $b_p$) be the eigenvalue of the Hecke operator $T_p$ (resp. $S_p$) on $f$, and  $\alpha$ be a root of the Hecke polynomial $X^2-a_p X+pb_p$. To $f$ and $\alpha$, one can associate an  eigenform $f_{\alpha}$ of level $\Gamma_1(N)\cap \Gamma_0(p)$ satisfying that $f_{\alpha}$ has the same prime-to-$p$ Hecke eigenvalues as $f$, and that $U_p(f_{\alpha})=\alpha f_{\alpha}$. The eigenform $f_{\alpha}$ is called a \emph{refinement} (or a \emph{$p$-stabilization}) of $f$. We have $\val_p(a_p)\geq 0$ and $\val_p(b_p)=k$ (where $\val_p$ the additive $p$-adic valuation on $\overline{\Q_p}$ normalized with $\val_p(p)=1$). Since $\alpha^2-a_p \alpha+pb_p=0$,  we easily deduce $\val_p(\alpha)\leq k+1$. The refinement $f_{\alpha}$ is called
\begin{itemize}
  \item \emph{of non-critical slope} if $\val_p(\alpha)<k+1$,
  \item \emph{of critical slope} if $\val_p(\alpha)=k+1$,
  \item \emph{critical} if $\rho_{f,p}$ is split (which implies $\val_p(\alpha)=k+1$),
\end{itemize}where $\rho_{f,p}$ is the $2$-dimensional $\Gal_{\Q_p}$-representation associated to $f$.

To the form $f_{\alpha}$, we can associate a $p$-adic $L$-function $L(f_{\alpha}, -)$, which is a distribution on $\Z_p^{\times}$ (for example see \cite{MTT} \cite{PSc} \cite{Bel} \cite{LLZ}... see also Proposition \ref{intpl} of this note). When $f_{\alpha}$ is non-critical, $L(f_{\alpha},-)$ interpolates the critical values of the classical $L$-function attached to $f_{\alpha}$ (e.g. see (\ref{equ: clp-int})). If  $f_{\alpha}$ is  moreover of non-critical slope, we know $L(f_{\alpha}, -)$ can be determined (up to non-zero scalars) by the interpolation property. When $f_{\alpha}$ is of critical slope, the interpolation property is however not enough to determine $L(f_{\alpha}, -)$.  And in this case,  the problem of proving that the $p$-adic $L$-functions constructed by different methods coincide is not completely settled yet (to the author's knowledge).
We remark that when $f_{\alpha}$ is critical, then $L(f_{\alpha}, \phi x^j)=0$ for all smooth characters $\phi$ on $\Z_p^{\times}$ and $j\in \{0,\cdots, k\}$ where $x^j$ denotes the algebraic character $a\mapsto a^j$ on $\Z_p^{\times}$ (e.g. see Proposition \ref{cri0}).   In all cases,  $L(f_{\alpha}, -)$ fits  into the  so-called two-variable $p$-adic L-functions $L(-,-)$ where the first variable runs around points on the eigencurve (we recall that such points correspond to overconvergent eigenforms $g$), and $L(g,-)$ is, up to non-zero scalars, equal to the $p$-adic $L$-function of $g$ when  $g$ is classical.

In \cite{Em05}, Emerton provided a representation theoretic approach of the construction of $p$-adic $L$-functions, using results in the $p$-adic Langlands program.   Via this approach, in \cite[\S~4.5]{Em1}, Emerton also constructed two-variable $p$-adic $L$-functions in certain neighborhoods of non-critical points in the eigencurve (we call a point critical if its associated eigenform is critical)). A key ingredient in this construction is an adjunction property of the Jacquet-Emerton functor around non-critical points.  In this note, we study the adjunction property around critical points in the eigencurve. We show that by adding ``poles", the adjunction can extend to critical points (cf. Theorem \ref{thm: clp-rps} and Theorem \ref{adj003}). Using this result, together with the smoothness of the eigencurve at critical points (due to Bella\"iche \cite{Bel}), we construct two-variable $p$-adic $L$-functions (see (\ref{padicL})) in some neighborhoods of  critical points via Emerton's approach. Evaluating the two-variable $p$-adic $L$-functions at the critical points then allows us to associate $p$-adic $L$-functions to the corresponding critical eigenforms.

The note is organised as follows. In \S~\ref{sec: clp-1.1}, \S \ref{sec: clp-1.2}, we recall some basic facts on the completed $H^1$ of modular curves,  and recall Emerton's construction of the eigencurves. Nothing is new in these twos sections. In \S~\ref{sec: clp-1.3}, we show an adjunction property of the Jacquet-Emerton functor in neighborhoods of a critical point on the eigencurve. In \S~\ref{sec: 21}, we use this adjunction property to construct two-variable $p$-adic $L$-functions around critical points. Finally, we study some properties of our $p$-adic $L$-functions in \S~\ref{sec: 22}.

\subsection{Notations}
In this note, $E$ will be a finite extension of $\Q_p$ with $\co_E$ its ring of integers, $\varpi_E\in \co_E$ a uniformiser and $k_E:=\co_E/\varpi_E$ its residue field.
We let $B$ denote the Borel subgroup of $\GL_2$ of upper triangular matrices, $T\subseteq B$ the subgroup of diagonal matrices,  and  $N\subseteq B$ the unipotent radical.
We use $\ft$ to denote the Lie algebra of $T(\Q_p)$ over $E$, $\ug$ the Lie algebra of $\GL_2(\Q_p)$ over $E$.
For a continuous character $\chi: T(\Q_p) \ra E^{\times}$, we denote by $d\chi: \ft \ra E$ the induced $E$-linear map with
\begin{equation*}
  d\chi (X)=\lim_{t\ra 0} \frac{\chi(\exp{tX})-1}{t} |_{t=0},
\end{equation*}
for $X\in \ft$.
Similarly, for a continuous character $\chi: \Q_p^{\times} \ra E^{\times}$, we denote by $\wt(\chi): \Q_p \ra E$ the induced $E$-linear map, called the weight of $\chi$. For $a\in E^{\times}$, we denote by $\unr(a): \Q_p^{\times} \ra E^{\times}$ the unramified character (i.e. $\unr(a)|_{\Z_p^{\times}}=1$) with $\unr(a)(p)=a$.

 For an $E$-vector space $V$ that is equipped with an $E$-linear action of $A$ (with $A$ a set of operators), and for a system of eigenvalues $\chi$ of $A$,  we denote by $V[A=\chi]$ the $\chi$-eigenspace for $A$, and $V\{A=\chi\}$ the generalized $\chi$-eigenspace for $A$.
\subsection*{Acknowledgement}
I want to thank Matthew Emerton for suggesting the problem of extending the adjunction formula to critical points on the eigencurve, that led to the note. I  thank Daniel Barrera Salazar,  John Bergdall, Xin Wan, Shanwen Wang for helpful discussions or remarks. I also thank the anonymous referee for the reading and helpful suggestions. This work was supported by EPSRC grant EP/L025485/1 and by Grant No. 7101500268 from Peking University.

\addtocontents{toc}{\protect\setcounter{tocdepth}{2}}
\section{Eigencurves}
\subsection{Completed cohomology of modular curves}\label{sec: clp-1.1}Let $\bA^{\infty}$ denote the finite adeles of $\Q$. For a compact open subgroup $K$ of $\GL_2(\bA^{\infty})$, we denote by $Y_K$ the affine modular curve over $\Q$ such that the $\bC$-points of $Y_K$ are given by
\begin{equation*}
  Y_K(\bC)=\GL_2(\Q)\backslash \GL_2(\bA)/\big(\bR_+^{\times}\SO_2(\bR)K\big).
\end{equation*}
Let $K^p=\prod_{\ell\neq p} K_{\ell}$ be a compact open subgroup of $\GL_2(\bA^{\infty,p})$, and
\begin{equation*}\Sigma(K^p):=\{p\}\cup \{\ell\neq p, \ \text{$K_{\ell}$ is not maximal}\}.
\end{equation*}Let  $\cH(K^p)$ denote the Hecke $\co_E$-algebra of $K^p$ double cosets in $\GL_2(\bA^{\infty,p})$ (which is non-commutative in general). Following Emerton (cf. \cite[(2.1.1)]{Em1}), we put
\begin{equation*}
\widetilde{H}^1_{\et,c}(K^p,\co_E):=\varprojlim_{n} \varinjlim_{K_p} H^1_{\et,c}\big(Y_{K_pK^p,\overline{\Q_p}}, \co_E/\varpi_E^n\big)
\end{equation*}
which is a complete $\co_E$-module equipped with a continuous action of $\GL_2(\Q_p)\times \cH(K^p)\times \Gal_{\Q}\times \pi_0$, where $\pi_0$ is the $2$-element group generated by the archimedean Hecke operator $$T_{\infty}:=(\bR^{\times}_+ \SO_2(\bR))\begin{pmatrix} 1 & 0 \\ 0 & -1\end{pmatrix}\SO_2(\bR).$$
Let $\cH^p$ be a commutative $\co_E$-subalgebra of $\cH(K^p)$ containing $\cH(K^p)^{\sph}:=\otimes'_{\ell\notin \Sigma(K^p)} \co_E[T_{\ell},S_{\ell}^{\pm}]$ with $T_{\ell}:=K^p\begin{pmatrix} \ell & 0 \\ 0 & 1 \end{pmatrix}K^p$, $S_{\ell}:=K^p\begin{pmatrix}\ell & 0 \\ 0 & \ell \end{pmatrix}K^p$. One has the Eichler-Shimura relations on $\widetilde{H}^1_{\et,c}(K^p,\co_E)$ (e.g. see \cite[Prop. 3.2.3]{BE} and the proof):
\begin{equation*}
  \Frob_{\ell}^{-2}-T_{\ell} \Frob_{\ell}^{-1}+\ell S_{\ell}=0
\end{equation*}
for $\ell\notin \Sigma(K^p)$, where $\Frob_{\ell}$ denotes an arithmetic Frobenius at $\ell$. Put $$\widetilde{H}^1_{\et,c}(K^p,E):=\widetilde{H}^1_{\et,c}(K^p,\co_E)\otimes_{\co_E} E,$$
 which is a unitary admissible $E$-Banach space representation of $\GL_2(\Q_p)$ equipped with an action of $\cH^p \times \Gal_{\Q}\times \pi_0$, commuting with $\GL_2(\Q_p)$.

\begin{notation}(1) For an $\co_E$-module $M$ equipped with a $\pi_0$-action, denote by $M^{\pm}$ the $\pm 1$-eigenspace for $T_{\infty}$, one has thus $M=M^{+}\oplus M^-$.

(2) Let $\overline{\rho}$ be a $2$-dimensional continuous representation of $\Gal_{\Q}$ over $k_E$, unramified outside $\Sigma(K^p)$, we denote by $\fm(\overline{\rho})$ the maximal ideal of  $\cH(K^p)^{\sph}$ attached to $\overline{\rho}$ satisfying that the quotient map $\cH(K^p)^{\sph}\twoheadrightarrow \cH(K^p)^{\sph}/\fm(\overline{\rho})\cong k_E$ sends $T_{\ell}$ to $\Tr(\overline{\rho}(\Frob_{\ell}^{-1}))$ and $S_{\ell}$ to $\ell^{-1}\det(\overline{\rho}(\Frob_{\ell}^{-1}))$ for $\ell\notin \Sigma(K^p)$.

(3) For a finite $\varpi_E$-torsion $\co_E$-module $M$ that is equipped with an $\cH(K^p)^{\sph}$-action, denote by $M_{\overline{\rho}}$ the localisation of $M$ at $\fm(\overline{\rho})$. We put
\begin{equation*}
\widetilde{H}^1_{\et,c}(K^p,\co_E)_{\overline{\rho}}:=\varprojlim_{n} \varinjlim_{K_p} H^1_{\et,c}\big(Y_{K_pK^p,\overline{\Q_p}}, \co_E/\varpi_E^n\big)_{\overline{\rho}},
\end{equation*}
and $\widetilde{H}^1_{\et,c}(K^p,E)_{\overline{\rho}}:=\widetilde{H}^1_{\et,c}(K^p,\co_E)_{\overline{\rho}} \otimes_{\co_E} E$.
\end{notation}
In the sequel, we fix a $2$-dimensional continuous representation $\overline{\rho}$ of $\Gal_{\Q}$ over $k_E$, which is absolutely irreducible, unramified outside $\Sigma(K^p)$ and satisfies $\widetilde{H}^1_{\et,c}(K^p,E)_{\overline{\rho}}\neq 0$.
We summarize some properties of $\widetilde{H}^1_{\et,c}(K^p,E)_{\overline{\rho}}$:
\begin{theorem}\label{thm: clp-cco}

(1) $\widetilde{H}^1_{\et,c}(K^p,E)_{\overline{\rho}}$ is a direct summand of $\widetilde{H}^1_{\et,c}(K^p,E)$ stable under $\GL_2(\Q_p)\times \Gal_{\Q}\times \cH^p\times \pi_0$. For any compact open pro-$p$-subgroup $K_p$ of $\GL_2(\Z_p)$, there exists $r\in \Z_{>0}$ such that we have an isomorphism of $K_p$-representations:
\begin{equation}
  \widetilde{H}^1_{\et,c}(K^p,E)_{\overline{\rho}}|_{K_p}\cong \cC(K_p,E)^{\oplus r},
\end{equation}
where $\cC(K_p,E)$ denotes the space of continuous functions on $K_p$ with values in $E$, equipped with the right regular $K_p$-action.

(2) We have:
\begin{equation}\label{localg}
  \widetilde{H}^1_{\et,c}(K^p,E)_{\overline{\rho}}^{\lalg}\xlongrightarrow{\sim} \oplus_{k\in \Z_{\geq 2}, w\in \Z} H^1_{\et,c}(K^p,\cF_{\alg(k,w)})_{\overline{\rho}} \otimes_E \alg(k,w)^{\vee}.
\end{equation}
where $\widetilde{H}^1_{\et,c}(K^p,E)_{\overline{\rho}}^{\lalg}$ denotes the locally algebraic vectors for $\GL_2(\Q_p)$ inside $\widetilde{H}^1_{\et,c}(K^p,E)_{\overline{\rho}}$, $\alg(k,w)$ denotes the algebraic representation $\Sym^{k-2} E^2 \otimes_E \dett^w$  of $\GL_2(\Q_p)$, $\cF_{\alg(k,w)}$ denotes the local system associated to $\alg(k,w)$ on $Y_{K_pK^p}$ for compact open subgroups $K_pK^p$ of $\GL_2(\bA^{\infty})$, and where $$H^1_{\et,c}(K^p,\cF_{\alg(k,w)})_{\overline{\rho}}:=\varinjlim_{K_p} H^1_{\et,c}(Y_{K^pK_p,\overline{\Q}},\cF_{\alg(k,w)})_{\overline{\rho}}$$ denotes the (classical) \'etale cohomology with compact support of modular curves (localized at $\fm(\overline{\rho})$) with tame level $K^p$ and coefficients in $\cF_{\alg(k,w)}$ (with $K_p$ running through compact open subgroups of $\GL_2(\Z_p)$).
\end{theorem}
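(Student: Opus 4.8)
Since the statement only collects known structural properties of the completed cohomology of modular curves, the plan is to assemble the relevant results of Emerton (\cite{Em1}), the one point requiring care being that everything survives localization at the absolutely irreducible residual representation $\overline{\rho}$. The direct summand assertion in (1) I would get from a semilocal Hecke-algebra argument: for each $n$, the reduction $\widetilde{H}^1_{\et,c}(K^p,\co_E/\varpi_E^n)$ is an admissible smooth representation of $\GL_2(\Q_p)$, so its Pontryagin dual is finitely generated over the Noetherian complete local ring $\co_E/\varpi_E^n[[K_p]]$; hence the image of $\cH(K^p)^{\sph}$ in its $\GL_2(\Q_p)$-equivariant endomorphism ring is a finite, hence semilocal, $\co_E/\varpi_E^n[[K_p]]$-algebra, and $\fm(\overline{\rho})$ picks out an idempotent $e_n$. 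This idempotent lies in the (completed) Hecke algebra and therefore commutes with the actions of $\GL_2(\Q_p)$, $\Gal_{\Q}$, $\cH^p$ and $\pi_0$; the $e_n$ are compatible in $n$, and passing to $\varprojlim_n$ and inverting $p$ exhibits $\widetilde{H}^1_{\et,c}(K^p,E)_{\overline{\rho}}$ as the asserted stable direct summand.

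For the freeness statement in (1), I would fix $K_p\subseteq\GL_2(\Z_p)$ pro-$p$ and small enough that each $Y_{K_pK^p}$ is smooth, so that (by Emerton's formalism) $R\Gamma_{\et,c}$ of the $p$-tower is computed by a bounded complex of finitely generated free $\co_E/\varpi_E^n[[K_p]]$-modules; localizing at $\overline{\rho}$, i.e. applying the idempotent of the previous paragraph, keeps the terms free over the local ring $\co_E/\varpi_E^n[[K_p]]$, giving such a complex $C^\bullet$ computing $\widetilde{H}^\bullet_{\et,c}(K^p,\co_E/\varpi_E^n)_{\overline{\rho}}$. Now the Eichler--Shimura relations recalled above, together with the affineness of the open modular curves, show that $H^0_{\et,c}(Y_{K_pK^p,\overline{\Q}},\co_E/\varpi_E^n)_{\overline{\rho}}=0$ and $H^2_{\et,c}(Y_{K_pK^p,\overline{\Q}},\co_E/\varpi_E^n)_{\overline{\rho}}=0$, the latter because $H^2_{\et,c}$ carries only Eisenstein Hecke eigensystems whereas $\overline{\rho}$ is absolutely irreducible; hence $C^\bullet\otimes^{\mathbb{L}}_{\co_E/\varpi_E^n[[K_p]]}k_E$ has cohomology concentrated in degree $1$. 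Replacing $C^\bullet$ by a minimal complex over the local ring $k_E[[K_p]]$ then forces $C^\bullet$ to reduce to a single finitely generated free module of some rank $r$, independent of $n$, placed in degree $1$, so that $\varprojlim_n$ identifies $\widetilde{H}^1_{\et,c}(K^p,\co_E)_{\overline{\rho}}$ with the continuous $\co_E$-dual of a free rank-$r$ $\co_E[[K_p]]$-module, i.e. (Schikhof duality) with $\cC(K_p,\co_E)^{\oplus r}$ as a $K_p$-representation; tensoring with $E$ gives the claim.

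For (2), I would feed (1) into Emerton's comparison between completed and classical cohomology. For an irreducible algebraic $W=\alg(k,w)$ the completed cohomology does not see the local system in the limit over $K_p$, whence $\widetilde{H}^\bullet_{\et,c}(K^p,\co_E)_{\overline{\rho}}\otimes_{\co_E}W_{\co_E}\cong\widetilde{H}^\bullet_{\et,c}(K^p,\cF_{W})_{\overline{\rho}}$; combined with (1) this vanishes outside degree $1$, so the Hochschild--Serre-type spectral sequence $H^i_{\mathrm{cont}}(K_p,\widetilde{H}^j_{\et,c}(K^p,\cF_{W})_{\overline{\rho}})\Rightarrow H^{i+j}_{\et,c}(Y_{K_pK^p,\overline{\Q}},\cF_{W})_{\overline{\rho}}$ degenerates and yields $H^1_{\et,c}(Y_{K_pK^p,\overline{\Q}},\cF_{W})_{\overline{\rho}}\cong\big(\widetilde{H}^1_{\et,c}(K^p,E)_{\overline{\rho}}\otimes_E W\big)^{K_p}$. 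Passing to $\varinjlim_{K_p}$ identifies $H^1_{\et,c}(K^p,\cF_{W})_{\overline{\rho}}$ with the smooth vectors of $\widetilde{H}^1_{\et,c}(K^p,E)_{\overline{\rho}}\otimes_E W$, equivalently with the multiplicity space of $W^\vee$ inside the locally algebraic vectors of $\widetilde{H}^1_{\et,c}(K^p,E)_{\overline{\rho}}$; summing over all pairs $(k,w)$ and contracting with $\alg(k,w)^\vee$ then gives (\ref{localg}).

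The main obstacle is not a single hard computation but the bookkeeping in (1): one must know that completed cohomology is computed by a \emph{bounded} complex of finitely generated free Iwasawa modules, that localization at $\fm(\overline{\rho})$ is exact and commutes with all the operators as well as with reduction mod $\varpi_E^n$, and that $H^0_{\et,c}$ and $H^2_{\et,c}$ of the open modular curves are annihilated by the non-Eisenstein localization. Each of these ingredients is available in \cite{Em1} (and, for Eichler--Shimura, in the discussion recalled above), but they have to be combined with care; once (1) is in place, (2) is essentially formal.
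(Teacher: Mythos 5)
Your proposal is correct; the paper itself gives no argument here, deferring entirely to \cite[\S 5.3, Cor.~5.3.19]{Em4} for (1) and to \cite[Thm.~4.1]{Br11b} (cf.\ \cite[Cor.~2.2.18, (4.3.4)]{Em1}) for (2), and what you have written is a faithful reconstruction of exactly those arguments: non-Eisenstein localization via Hecke idempotents, vanishing of $H^0_{\et,c}$ and $H^2_{\et,c}$ after localization, the minimal-perfect-complex/Nakayama argument over the local Iwasawa algebra $\co_E[[K_p]]$, and Emerton's spectral sequence comparing completed and classical cohomology. So you take essentially the same route as the sources the paper cites, and there is no gap to report.
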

\begin{proof}
  For (1), see the discussion in \cite[\S~5.3]{Em4} and \cite[Cor. 5.3.19]{Em4}. For (2), see \cite[Thm. 4.1]{Br11b} (see also \cite[Cor. 2.2.18, (4.3.4)]{Em1}).
\end{proof}
\subsection{Eigencurves (eigensurfaces)}\label{sec: clp-1.2}
We first briefly recall Emerton's construction of the  eigencurves (eigensurfaces), and we refer to \cite[\S~2.3]{Em1} for details. We let  $\widetilde{H}^1_{\et,c}(K^p,E)_{\overline{\rho}}^{\an}$ be the locally analytic subrepresentation of $\widetilde{H}^1_{\et,c}(K^p,E)_{\overline{\rho}}$. By \cite[Thm. 0.5]{Em11}, applying the Jacquet-Emerton functor to $\widetilde{H}^1_{\et,c}(K^p,E)_{\overline{\rho}}^{\an}$, one gets an essentially admissible locally analytic representation $J_B\big(\widetilde{H}^1_{\et,c}(K^p,E)_{\overline{\rho}}^{\an}\big)$ of $T(\Q_p)$. Here $J_B\big(\widetilde{H}^1_{\et,c}(K^p,E)_{\overline{\rho}}^{\an}\big)$ being essentially admissible means that there exists a coherent sheaf $\cM$ over $\widehat{T}$ such that (cf. \cite[\S 6.4]{Em04}, \cite[Prop. 2.3.2]{Em1})
\begin{equation}\label{jacEss}\cM(\widehat{T})\cong J_B\big(\widetilde{H}^1_{\et,c}(K^p,E)_{\overline{\rho}}^{\an}\big)^{\vee}_b,
\end{equation}
where ``$-^{\vee}_b$"  denotes the continuous dual equipped with the strong topology, and where $\widehat{T}$ denotes the rigid space parametrizing locally analytic characters of $T(\Q_p)$ (e.g. see \cite[Prop. 6.4.5]{Em04}).
Moreover, $J_B\big(\widetilde{H}^1_{\et,c}(K^p,E)_{\overline{\rho}}^{\an}\big)$ inherits from $\widetilde{H}^1_{\et,c}(K^p,E)_{\overline{\rho}}^{\an}$ a continuous action of $\cH^p\times \pi_0$ commuting with $T(\Q_p)$. Hence $\cM$  is equipped with a natural $\co(\widehat{T})$-linear action of
$\cH^p\times \pi_0$ such that the isomorphism in (\ref{jacEss}) is $\cH^p\times \pi_0$-equivariant.

From $\{\cM,\widehat{T},\cH^p\}$, one can  construct as in \cite[\S~2.3]{Em1} a rigid analytic space $\cS$ over $E$ equipped with a natural finite morphism $\kappa_1: \cS\ra \widehat{T}$ such that
for any admissible affinoid open $U=\Spm A\subset \widehat{T}$, we have $\kappa_1^{-1}(\cU)\cong \Spm B$ where $B$ is the finite $A$-subalgebra of $\End_A(\cM(\cU))$ generated by $\cH^p$ \big(noting that $\cM(\cU)$ is equipped with an $A$-linear $\cH^p$-action\big). The rigid space $\cS$ is referred to as the eigensurface of tame level $K^p$.  An $E$-point $z$ of $\cS$ can be parametrized as $(\chi_z,\lambda_z)$ where $\chi_z$ is a locally analytic character of $T(\Q_p)$ over $E$, and $\lambda_z: \cH^p \ra E$ is a system of eigenvalues of $\cH^p$. Moreover,  such a point $(\chi_z,\lambda_z)$ lies in $\cS$ if and only if the corresponding eigenspace
\begin{equation*}
  J_B\big(\widetilde{H}^1_{\et,c}(K^p,E)_{\overline{\rho}}^{\an}\big)[T(\Q_p)=\chi_z,\cH^p=\lambda_z]\neq 0.
\end{equation*}
The $\co(\widehat{T})$-module $\cM$ has a natural $\co(\cS)$-action, which makes $\cM$ to be a coherent $\co(\cS)$-module. For any $z=(\chi_z,\lambda_z)\in \cS$ with $k_z$ the residue field, we have a natural isomorphism of finite dimensional $k_z$-vector spaces (cf. \cite[Prop. 2.3.3 (iii)]{Em1})
\begin{equation}\label{fib0}
(z^*\cM)^{\vee} \cong J_B\big(\widetilde{H}^1_{\et,c}(K^p,E)_{\overline{\rho}}^{\an}\big)[T(\Q_p)=\chi_z,\cH^p=\lambda_z].
\end{equation}
Since the action of the center $\Q_p^{\times}$ of $\GL_2(\Q_p)$ on $\widetilde{H}^1_{\et,c}(K^p,E)_{\overline{\rho}}$ is unitary, we easily see
\begin{equation}\label{center}
\val_p(\chi_z(p))=0
\end{equation}
if $(\chi_z, \lambda_z)\in \cS$. The following definition is standard.
\begin{definition}\label{class00} (1) A point $z=(\chi_z,\lambda_z)\in \cS$ is called classical if
\begin{equation*}
  J_B\big(\widetilde{H}^1_{\et,c}(K^p,E)_{\overline{\rho}}^{\lalg}\big)[T(\Q_p)=\chi_z,\cH^p=\lambda_z]\neq 0.
\end{equation*}
A point $z$ is called very classical if $z$ is classical and the natural injection
\begin{equation*}
  J_B\big(\widetilde{H}^1_{\et,c}(K^p,E)_{\overline{\rho}}^{\lalg}\big)[T(\Q_p)=\chi_z,\cH^p=\lambda_z]
  \hooklongrightarrow J_B\big(\widetilde{H}^1_{\et,c}(K^p,E)_{\overline{\rho}}^{\an}\big)[T(\Q_p)=\chi_z,\cH^p=\lambda_z]
\end{equation*}
is bijective.

(2) Let $z=(\chi_z, \lambda_z)$ be a point in $\cS$ with $\chi_z=(\psi_{z,1}x^{k_1})\otimes (\psi_{z,2} x^{k_2})$ where $\psi_{z,i}$ are smooth characters of $\Q_p^{\times}$, $k_1$, $k_2\in \Z$ and $k_2\geq k_2$ (we call such character locally algebraic of dominant weight). We call $z$  of non-critical slope if $\val_p(p\psi_{z,1}(p))<1-k_2$.
 \end{definition}
We have by \cite[Prop. 2.3.6]{Em1}:
\begin{proposition}\label{prop: clp-cla}
 Let $z=(\chi_z,\lambda_z)\in \cS$ with $\chi_z$ locally algebraic of dominant weight. If $z$ is of non-critical slope, then $z$ is very classical.
\end{proposition}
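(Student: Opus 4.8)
The plan is to upgrade the canonical injection of Definition \ref{class00}(1) to a bijection; non-vanishing of the source (i.e. classicality) will drop out along the way. Set $V:=\widetilde{H}^1_{\et,c}(K^p,E)_{\overline{\rho}}$, which by Theorem \ref{thm: clp-cco}(1) is an admissible unitary Banach representation of $\GL_2(\Q_p)$, so $V^{\an}$ is very strongly admissible and Emerton's Jacquet-module machinery applies. Write $\chi_z=(\psi_{z,1}x^{k_1})\otimes(\psi_{z,2}x^{k_2})$ with $k_1\geq k_2$.

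The first step is to apply Emerton's adjunction formula for the Jacquet--Emerton functor \cite{Em11}: there is a canonical $\cH^p$-equivariant injection
\[
  \Hom_{\GL_2(\Q_p)}\big((\Ind_{\overline{B}(\Q_p)}^{\GL_2(\Q_p)}\chi_z\delta_B)^{\an},\,V^{\an}\big)\hooklongrightarrow J_B\big(V^{\an}\big)[T(\Q_p)=\chi_z]
\]
identifying the left-hand side with the subspace of \emph{balanced} vectors, and, in parallel, using the smooth Jacquet functor together with Emerton's computation of $J_B$ on locally algebraic representations, a canonical identification of $J_B(V^{\lalg})[T(\Q_p)=\chi_z]$ with a $\Hom$-space out of the locally algebraic principal series $\Sym^{k_1-k_2}E^2\otimes\dett^{k_2}\otimes(\Ind_{\overline{B}(\Q_p)}^{\GL_2(\Q_p)}\psi_{z,1}\otimes\psi_{z,2})^{\sm}$ into $V^{\lalg}$. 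I would first recall these two adjunctions and their compatibility with the inclusion of the locally algebraic principal series into the locally analytic one as the latter's subspace of locally algebraic vectors.

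The second step is to exploit the non-critical slope hypothesis $\val_p(p\psi_{z,1}(p))<1-k_2$ twice. First, combined with (\ref{center}) (so $\val_p(\chi_z(p))=0$), a direct check of the inequality defining ``balanced'' in \cite{Em11} shows that \emph{every} vector of $J_B(V^{\an})[T(\Q_p)=\chi_z]$ is balanced, so the first $\Hom$-space above exhausts the whole eigenspace. Second, the same numerical range guarantees that the non-locally-algebraic constituent $\cC$ of $(\Ind_{\overline{B}(\Q_p)}^{\GL_2(\Q_p)}\chi_z\delta_B)^{\an}$ carries no $\GL_2(\Q_p)$-invariant norm (equivalently, its universal unitary completion vanishes), hence admits no nonzero continuous $\GL_2(\Q_p)$-map into the unitary Banach representation $V$; for this unitarizability input I would quote the relevant results of Breuil and Emerton (\cite{Em05}, \cite{Br11b}). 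Consequently every balanced map out of $(\Ind_{\overline{B}(\Q_p)}^{\GL_2(\Q_p)}\chi_z\delta_B)^{\an}$ factors through its locally algebraic subrepresentation, so the two $\Hom$-spaces coincide and the injection of Definition \ref{class00}(1) is onto; since $z\in\cS$ forces $J_B(V^{\an})[T(\Q_p)=\chi_z,\cH^p=\lambda_z]\neq 0$, the locally algebraic eigenspace is nonzero too, and $z$ is very classical.

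The main obstacle is the normalization bookkeeping in the first two steps: matching Emerton's abstract balancedness condition in \cite{Em11} (phrased via the central character and a weight inequality on the Jacquet module) with the concrete slope inequality $\val_p(p\psi_{z,1}(p))<1-k_2$ of Definition \ref{class00}(2), keeping careful track of the $\delta_B$-twists and of which component of $\chi_z$ governs which bound, and --- relatedly --- pinning down precisely which constituents of $(\Ind_{\overline{B}(\Q_p)}^{\GL_2(\Q_p)}\chi_z\delta_B)^{\an}$ are unitarizable in this range (and handling, routinely, the case where the smooth principal series is reducible). Everything else --- very strong admissibility of $V^{\an}$ from Theorem \ref{thm: clp-cco}(1), the adjunction formula, the Jacquet module of locally algebraic principal series, and the comparison of $V^{\lalg}$ with classical \'etale cohomology from Theorem \ref{thm: clp-cco}(2) --- enters as a quoted black box; equivalently, one may simply invoke the classicality criterion underlying \cite[Prop. 2.3.6]{Em1} directly, which is where this proposition originates.
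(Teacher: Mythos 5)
The paper offers no argument here beyond the citation of \cite[Prop. 2.3.6]{Em1}, which is exactly the fallback you name in your closing sentence, so at that level your proposal agrees with the paper. Your expanded sketch is also the right circle of ideas (it is essentially a reconstruction of Emerton's proof of that proposition, i.e.\ of the classicality criterion \cite[Thm. 4.4.5]{Em11}), but its central step is mislocated: being \emph{balanced} is not ``defined by an inequality'' that one can verify directly against $\val_p(p\psi_{z,1}(p))<1-k_2$. For a locally algebraic dominant character of weight $k=k_1-k_2$, balancedness of an eigenvector $v$ amounts to the vanishing $X_-^{k+1}\cdot\widetilde{v}=0$ for the canonical lift $\widetilde{v}\in (\widetilde{H}^1_{\et,c}(K^p,E)_{\overline{\rho}}^{\an})^{N_0}$ (compare Lemma \ref{lem: clp-ber} of this paper), and the slope hypothesis enters through a norm estimate: if $X_-^{k+1}\cdot\widetilde{v}\neq 0$, it would be a $T(\Q_p)^+$-eigenvector on which the normalized $U_p$-operator acts with eigenvalue of negative valuation, which is impossible by the unitarity/admissibility of Theorem \ref{thm: clp-cco}(1) together with (\ref{center}). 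That norm argument, not the definition of balancedness, is where the inequality of Definition \ref{class00}(2) is used. Once this is in place, your second sub-step (non-unitarizability of the companion constituent, via \cite{Br} or \cite{Em05}) is superfluous: for a single dominant locally algebraic character the closed subrepresentation $I_{\overline{B}(\Q_p)}^{\GL_2(\Q_p)}(\chi_z\delta_B^{-1})$ occurring in Emerton's adjunction \cite[Thm. 0.13]{Em2} is already the locally algebraic principal series, so balanced vectors are automatically classical without any appeal to invariant norms; conversely, relying on the invariant-norm route would import results that are strictly harder than what is needed.
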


Using Theorem \ref{thm: clp-cco} (1) and \cite[Prop. 4.2.36]{Em11}, one can actually reformulate the construction of $\cS$ using the spectral theory of compact operators (e.g. see \cite[Lem. 3.10]{BHS1}).
Let $\widehat{T}_0$  be the rigid space over $E$ parametrizing locally analytic characters of $T(\Z_p)$, and we denote by $\kappa$ the composition
\begin{equation*}
  \kappa: \cS \xlongrightarrow{\kappa_1} \widehat{T} \lra \widehat{T}_0.
\end{equation*}
Let $\varpi_1:=\begin{pmatrix}
  p & 0 \\ 0 & 1 \end{pmatrix}$,  $\varpi_2:=\begin{pmatrix}
    p & 0 \\ 0 & p
  \end{pmatrix} \in T(\Q_p)$.
By the same argument as in the proof of \cite[Prop. 3.11]{BHS1}, we have:
\begin{proposition}\label{prop: eigna}
  There exists an admissible covering $\{\cU_i\}_{i\in I}$ of $\cS$ by affinoids $\cU_i$ such that for all $i$ there exits an open affinoid $W_i$ of $\widehat{T}_0$ such that
\begin{itemize}
  \item the morphism $\kappa$ induces a finite surjective morphism from each irreducible component of $\cU_i$ onto $W_i$,
  \item $\cM(\cU_i)$ is a finite projective $\co(W_i)$-module equipped with $\co(W_i)$-linear operators $\varpi_1$, $\varpi_2$ and $T\in \cH^p$,
  \item $\co(\cU_i)$ is isomorphic to a $\co(W_i)$-subalgebra of $\End_{\co(W_i)}(\cM(\cU_i))$ generated by $\varpi_1$, $\varpi_2$ and the operators in $\cH^p$.
\end{itemize}
\end{proposition}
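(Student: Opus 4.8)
The plan is to follow the proof of \cite[Prop.~3.11]{BHS1} essentially verbatim: I would reformulate the construction of $\cS$ through the spectral theory of compact operators, and then read off the required local structure from the theory of Fredholm hypersurfaces. Let me indicate the main points.

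First I would set up a compact-operator model for $\cM$ over weight space. By Theorem~\ref{thm: clp-cco}~(1), for a compact open pro-$p$-subgroup $K_p\subseteq\GL_2(\Z_p)$ one has $\widetilde{H}^1_{\et,c}(K^p,E)_{\overline{\rho}}|_{K_p}\cong\cC(K_p,E)^{\oplus r}$, which is exactly the finiteness hypothesis needed to apply \cite[Prop.~4.2.36]{Em11} (together with \cite[\S~6.4]{Em04}): for every admissible affinoid open $W=\Spm A\subseteq\widehat{T}_0$ one obtains an orthonormalizable $A$-Banach module $M_W$, functorial in $W$, carrying commuting continuous $A$-linear endomorphisms $\varpi_1$, $\varpi_2$ and $T$ for each $T\in\cH^p$, with $\varpi_1$ compact, such that $J_B\big(\widetilde{H}^1_{\et,c}(K^p,E)_{\overline{\rho}}^{\an}\big)$ over $W$ — hence the coherent sheaf $\cM$ over $\kappa^{-1}(W)$ together with its $T(\Q_p)$- and $\cH^p$-actions — is recovered from the $\varpi_1$-finite-slope part of $M_W$. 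I would also record here that, by unitarity of the $\GL_2(\Q_p)$-action on $\widetilde{H}^1_{\et,c}(K^p,E)_{\overline{\rho}}$, and in particular of the centre, the element $\varpi_2$ acts on $M_W$ by an automorphism all of whose eigenvalues have $p$-adic valuation $0$ (compare (\ref{center})); this is why $\widehat{T}_0$, rather than $\widehat{T}$, is the natural base.

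Next I would invoke the theory of Fredholm hypersurfaces for the compact operator $\varpi_1$. Writing $P_W(X)=\det(1-X\varpi_1\mid M_W)$ for its Fredholm series over $W=\Spm A$, the usual spectral theory (see the references in \cite[\S~3]{BHS1}) shows that for each $z\in\cS$ there are a connected — hence, as $\widehat{T}_0$ is smooth, irreducible — affinoid open $W\ni\kappa(z)$ and a slope $h\in\Q_{\geq 0}$ admitting a factorisation $P_W=P_W^{\leq h}\cdot P_W^{>h}$ with $P_W^{\leq h}\in A[X]$ of constant degree, such that the associated slope-$\leq h$ summand $M_W^{\leq h}$ of $M_W$ is finite projective over $A=\co(W)$ and stable under $\varpi_1$, $\varpi_2$ and $\cH^p$, and such that the affinoid $\cU_z:=\Spm B_z\subseteq\cS$ — where $B_z$ is the $A$-subalgebra of $\End_A(M_W^{\leq h})$ generated by $\varpi_1$, $\varpi_2$ and $\cH^p$, and $\cM(\cU_z)$ is the finite projective $A$-module underlying $M_W^{\leq h}$ (or its $A$-linear dual) with its $B_z$-action — is an open neighbourhood of $z$. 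Standard properties of the Fredholm hypersurface let one choose these so that $\{\cU_z\}_{z\in\cS}$ is an admissible covering of $\cS$; set $\cU_i:=\cU_z$ and $W_i:=W$.

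Finally I would check the three bullet points. The second and third hold by construction: $\cM(\cU_i)$ is the finite projective $\co(W_i)$-module underlying $M_{W_i}^{\leq h}$, with its operators $\varpi_1$, $\varpi_2$, $T$, and $\co(\cU_i)=B_i$ is by definition the $\co(W_i)$-subalgebra of $\End_{\co(W_i)}(\cM(\cU_i))$ generated by $\varpi_1$, $\varpi_2$ and $\cH^p$. For the first, since $\End_{\co(W_i)}(\cM(\cU_i))$ is a finite $\co(W_i)$-module, so is its submodule $\co(\cU_i)$, whence $\kappa|_{\cU_i}\colon\cU_i\to W_i$ is finite, in particular of closed image; and since $\End_{\co(W_i)}(\cM(\cU_i))$, a fortiori $\co(\cU_i)$, is torsion-free over the domain $\co(W_i)$, the contraction to $\co(W_i)$ of any minimal prime of the Noetherian ring $\co(\cU_i)$ is zero (an element of $\co(W_i)\setminus\{0\}$ lying in such a prime would be a zero-divisor in $\co(\cU_i)$, yet multiplication by it is injective on the torsion-free $\co(W_i)$-module $\co(\cU_i)$), so every irreducible component of $\cU_i$ dominates, hence — by finiteness — surjects onto $W_i$. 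The one step that really needs care is the first one, namely matching Emerton's essentially admissible Jacquet functor with the compact-operator formalism via \cite[Prop.~4.2.36]{Em11} and tracking the behaviour of $\varpi_2$; after that, the Fredholm-hypersurface input and the commutative algebra in the last step are routine, which is precisely why the paper can simply appeal to the argument of \cite[Prop.~3.11]{BHS1}.
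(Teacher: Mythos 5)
Your proposal is correct and follows exactly the route the paper takes: the paper's entire proof is the remark preceding the statement, namely that Theorem \ref{thm: clp-cco} (1) together with \cite[Prop. 4.2.36]{Em11} lets one recast $\cM$ via the spectral theory of the compact operator $\varpi_1$, after which one repeats the argument of \cite[Prop. 3.11]{BHS1}. Your write-up simply makes explicit the Fredholm-hypersurface factorisation and the (routine) finiteness/torsion-freeness check for surjectivity of components, which the paper leaves to the citation.
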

%
%
%
We assume that the $\cH^p$-action on $\widetilde{H}^1_{\et,c}(K^p,E)_{\overline{\rho}}^{\lalg}$ is semi-simple \big(e.g. this holds when $\cH^p=\cH(K^p)^{\sph}$\big). We summarize some (well-known) properties of $\cM$ and $\cS$ in the following theorem.
\begin{theorem}\label{thm: clp-eac}
  (1) The coherent sheaf $\cM$ is Cohen-Macauly over $\cS$.

  (2) The rigid space $\cS$ is equidimensional of dimension $2$, and the points of non-critical slope are Zariski-dense in $\cS$, and accumulate at points $(\chi,\lambda)$ with $\chi$ locally algebraic.

  (3) The rigid space $\cS$ is reduced.
\end{theorem}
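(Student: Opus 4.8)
The plan is to verify the three assertions on the members $\cU_i$ of the admissible covering of Proposition~\ref{prop: eigna}; write $A:=\co(\cU_i)$, $R:=\co(W_i)$, $M:=\cM(\cU_i)$, so that $R$ is a regular affinoid algebra (which one may take to be a $2$-dimensional domain), $M$ is finite projective over $R$, $A$ is finite over $R$, and $A\hookrightarrow\End_R(M)$ with $M$ a faithful $A$-module. Part~(1) is then immediate: $M$ is locally free over the ($2$-dimensional, regular) localizations of $R$, hence a maximal Cohen--Macaulay $R$-module, and as the finite morphism $\kappa_1\colon\cU_i\to W_i$ leaves depth and Krull dimension of a finite module unchanged, $M=\cM(\cU_i)$ is Cohen--Macaulay over $A$; that is, $\cM$ is Cohen--Macaulay over $\cS$.

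For part~(2), equidimensionality of dimension $2$ is clear, since by Proposition~\ref{prop: eigna} each irreducible component of $\cU_i$ is finite and surjective onto the $2$-dimensional $W_i$. For the density and accumulation of the points of non-critical slope I would run the standard slope-bound argument. The slope $z\mapsto\val_p(\chi_z(\varpi_1))$ is bounded above on the affinoid $\cU_i$, say by some $C$ (boundedness of the Newton polygon of the compact operator $\varpi_1$ over the quasi-compact $W_i$), while the locally algebraic characters of $T(\Z_p)$ of dominant weight with ``weight gap'' larger than $C$ are Zariski-dense in $\widehat{T}_0$ and accumulate at every locally algebraic point. Above such a weight, every point of $\cU_i$ is of non-critical slope in the sense of Definition~\ref{class00}(2), indeed very classical by Proposition~\ref{prop: clp-cla}. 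Since the preimage of a Zariski-dense subset under a finite surjective morphism from an equidimensional rigid space is again Zariski-dense (here equidimensionality is used, so that no component lies over a proper closed subset), the non-critical-slope points are Zariski-dense in $\cU_i$, hence in $\cS$. Running the same argument inside a small affinoid neighborhood of a given locally algebraic point $(\chi,\lambda)$ --- chosen, using finiteness of $\kappa$, so that $(\chi,\lambda)$ is the unique point of that neighborhood over its image in $\widehat{T}_0$ --- gives the accumulation statement.

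For part~(3) I would separate ``no embedded components'' from ``generically reduced''. No embedded components: since $M$ is $R$-torsion free and faithful over $A$, the ring $A$ is $R$-torsion free, and a finite torsion-free algebra over the $2$-dimensional regular domain $R$ has only minimal associated primes --- an associated prime of $A$ cannot contain a nonzero element of $R$, hence contracts to $(0)$ in $R$, so that $A/\fp$ is $2$-dimensional. Generically reduced: by part~(2), in each irreducible component of $\cU_i$ I may choose a very classical point $z$ of non-critical slope lying in the (open, dense) locus where $A$ is $R$-flat and such that the $U_p$-eigenvalues occurring in its fibre are pairwise distinct; the excluded loci are nowhere dense, so such $z$ still meet every component. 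For such a $z$, the identification~(\ref{fib0}), together with Proposition~\ref{prop: clp-cla} and the standing semisimplicity hypothesis on the $\cH^p$-action on $\widetilde{H}^1_{\et,c}(K^p,E)_{\overline{\rho}}^{\lalg}$, shows that the Hecke operators act semisimply on the fibre of $\cM$ above $\kappa_1(z)$, so that the corresponding fibre of $A$ is a product of fields, in particular reduced. As $A$ is then finite flat over the reduced ring $R$ in a neighborhood of $z$ with reduced special fibre, $\co_{\cS,z}$ is reduced; localizing at the generic point of the component through $z$ shows $A_{\fq}$ is a field for every minimal prime $\fq$ of $A$. Combined with the absence of embedded primes, this forces $A$, hence $\cS$, to be reduced.

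The step I expect to be the main obstacle is showing that the fibre of $\cM$ at the chosen classical points in part~(3) carries a semisimple (hence reduced) Hecke action: one must pass through~(\ref{fib0}) and the classicality control theorem to identify this fibre with a dual of a space of genuinely classical modular forms of fixed weight and bounded slope, on which $\cH^p$ acts semisimply by hypothesis and $U_p=\varpi_1$ acts semisimply once the two refinements are distinct; this is precisely where the reducedness criterion for eigenvarieties of Bella\"iche--Chenevier is invoked. The remaining ingredients are either the standard spectral/accumulation machinery for eigenvarieties or formal commutative algebra over the regular base $\widehat{T}_0$.
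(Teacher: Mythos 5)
Your proposal follows the same route as the paper, which simply delegates: part (1) to the argument of \cite[Lem. 3.8]{BHS2} (finite projectivity of $\cM(\cU_i)$ over the regular two-dimensional $\co(W_i)$ plus invariance of depth and dimension under the finite map $\kappa$), part (2) to \cite[Prop. 6.4.2]{Che} and the accumulation argument of \cite[\S 6.4.5]{Che}, and part (3) to \cite[Prop. 6.4]{Che05}. Your expansions of (1) and (2) are exactly those arguments, including the correct use of equidimensionality to pull Zariski-density back through the finite surjection onto $W_i$, and your two-step structure for (3) (no embedded primes from $R$-torsion-freeness of $\co(\cU_i)\hookrightarrow\End_R(\cM(\cU_i))$, plus generic reducedness from semisimple fibres at a dense set of classical points) is Chenevier's.

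The one step you should not leave as stated is the assertion that the locus where ``the $U_p$-eigenvalues occurring in the fibre are pairwise distinct'' is open and dense, with ``the excluded loci are nowhere dense.'' This is circular: if a component of $\cU_i$ were non-reduced --- say generically a doubled point carrying a single generalized $U_p$-eigenvalue --- the discriminant of $\varpi_1$ on $\cM$ would vanish identically on that component, so the excluded locus would contain the whole component; ruling this out is precisely what part (3) is supposed to prove. The correct replacement is the valuation argument: at a weight of gap $k$ with $k+1>2C$, where $C$ bounds $\val_p(\chi_{z}(\varpi_1))$ on the affinoid $\cU_i$, the two refinements $\alpha,\beta$ of any contributing newform satisfy $\val_p(\alpha)+\val_p(\beta)=k+1>2C$, so either only one of them lies in $\cU_i$ or they have distinct valuations, and in either case $\varpi_1$ acts semisimply on the classical fibre (oldforms with repeated eigensystem contribute scalar, hence semisimple, actions). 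Combined with the hypothesized semisimplicity of $\cH^p$ on $\widetilde{H}^1_{\et,c}(K^p,E)_{\overline{\rho}}^{\lalg}$ and the identification of the fibre via (\ref{fib0}) and Proposition \ref{prop: clp-cla}, this gives reduced fibres of $\co(\cU_i)$ at a Zariski-dense set of weights meeting every component, and the rest of your argument (étaleness at such a point, hence reducedness at the generic point of each component, hence reducedness given the absence of embedded primes) goes through.
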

\begin{proof}
  (1) follows from Proposition \ref{prop: eigna}  and the argument in the proof of \cite[Lem. 3.8]{BHS2}. Since $\widehat{T}_0$ is equidimensional of dimension $2$, by \cite[Prop. 6.4.2]{Che} and Proposition \ref{prop: eigna}, $\cS$ is also equidimensional of dimension $2$. The density and the accumulation property of the points of non-critical slope follow from standard arguments as in \cite[\S~6.4.5]{Che}. Finally, since the action of $\cH^p$ on $\widetilde{H}^1_{\et,c}(K^p,E)_{\overline{\rho}}^{\lalg}$ is semi-simple, (3) follows by the same argument as in \cite[Prop. 6.4]{Che05}.
\end{proof}
\begin{remark}\label{dens00}
 By  Theorem \ref{thm: clp-eac} (2) and Proposition \ref{prop: clp-cla}, the very classical points are Zariski-dense in $\cS$.
\end{remark}
Let $z=(\chi_z=\chi_{z,1}\otimes \chi_{z,2},\lambda_z)\in \cS$. Recall that one can associate (by the theory of pseudo-characters) to  $z$ a semi-simple continuous representation $\rho_z: \Gal_{\Q}\ra \GL_2(k_z)$ (where $k_z$ denotes the residue field of $z$, which is a finite extension of $E$) satisfying that
\begin{enumerate}
\item the mod $p$ reduction of $\rho_z$ is isomorphic to $\overline{\rho}$,
\item the restriction $\rho_{z,\ell}:=\rho_z|_{\Gal_{\Q_{\ell}}}$ is unramified for all $\ell \notin \Sigma(K^p)$, and
  \begin{equation*}
    \rho_z(\Frob_{\ell}^{-2})-\lambda_z(T_{\ell})\rho_z(\Frob_{\ell}^{-1})+\ell \lambda_z(S_{\ell})=0.
  \end{equation*}
\end{enumerate}
Note that the first property together with our assumption on $\overline{\rho}$ actually imply that $\rho_z$ is absolutely irreducible. Note also that $\rho_z$ is determined by the second property by Chebotarev's density theorem.
Let $z=(\chi_z, \lambda_z)\in \cS$ be such that $\wt(z):=\wt(\chi_{z,1})-\wt(\chi_{z,2})\in \Z_{\geq 0}$, we put
\begin{equation*}\chi_z^c:=\chi_z(x^{-\wt(z)-1} \otimes x^{\wt(z)+1}).\end{equation*}
 If $z^c:=(\chi_z^c, \lambda_z)\in \cS$, we call $z^c$ a \emph{companion point} of $z$. Suppose $z$ is classical, then we call $z$ \emph{critical} if $z$ admits a companion point. By \cite[Prop. 4.5.5]{Em1} (and the proof), we have (noting that the bad points in \emph{loc. cit.} are exactly the points admitting companion points in our terminology, see \cite[Def. 4.5.4]{Em1})
\begin{proposition}\label{sloNC}
If $z$ is of non-critical slope, then $z$ is not critical.
\end{proposition}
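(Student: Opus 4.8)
The statement to prove is Proposition \ref{sloNC}: if $z$ is of non-critical slope, then $z$ is not critical, i.e. $z$ does not admit a companion point $z^c=(\chi_z^c,\lambda_z)\in\cS$. Since the excerpt tells us this follows from \cite[Prop. 4.5.5]{Em1}, the plan is to translate that citation into the setup of this note. First I would recall what a companion point is: writing $w:=\wt(z)\in\Z_{\geq 0}$, we have $\chi_z^c=\chi_z(x^{-w-1}\otimes x^{w+1})$, so $\chi_z^c=\chi_{z,1}x^{-w-1}\otimes\chi_{z,2}x^{w+1}$. The crucial observation is that if $z$ is classical with $\chi_z$ locally algebraic of dominant weight, then $\chi_z^c$ is \emph{antidominant} (its algebraic part is $x^{k_1-w-1}\otimes x^{k_2+w+1}$, and $k_1-w-1=k_2-1<k_2+w+1$), so $z^c$ lies in a genuinely different region of $\widehat{T}$ and cannot itself be reached by the trivial reason.

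Next I would spell out the slope condition. If $z$ is of non-critical slope, Definition \ref{class00}(2) gives $\val_p(p\psi_{z,1}(p))<1-k_2$ where $\chi_z=(\psi_{z,1}x^{k_1})\otimes(\psi_{z,2}x^{k_2})$. Combined with the unitarity constraint \eqref{center}, $\val_p(\chi_z(p))=0$, one gets a matching lower bound on $\val_p(p\psi_{z,2}(p))$. The key step is then: if the companion point $z^c=(\chi_z^c,\lambda_z)$ were in $\cS$, then by \eqref{fib0} (or directly by the definition of $\cS$) the Jacquet module eigenspace
\begin{equation*}
J_B\big(\widetilde{H}^1_{\et,c}(K^p,E)_{\overline{\rho}}^{\an}\big)[T(\Q_p)=\chi_z^c,\cH^p=\lambda_z]
\end{equation*}
would be nonzero. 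By Emerton's adjunction formula (the non-critical case recalled in \S\ref{sec: clp-1.3}, or \cite[\S 4.4]{Em1}), a nonzero such eigenvector produces a nonzero map from a locally analytic principal series $\Ind_{\overline{B}}^{\GL_2}(\chi_z^c)^{\an}$ (suitably normalized) into $\widetilde{H}^1_{\et,c}(K^p,E)_{\overline{\rho}}^{\an}$. One then analyzes the Galois representation: the $\cH^p$-eigensystem $\lambda_z$ determines $\rho_z$, and a companion point at an antidominant weight forces $\rho_{z,p}=\rho_z|_{\Gal_{\Q_p}}$ to be crystalline (or at least de Rham up to twist) with a specific, \emph{split} refinement — this is precisely the mechanism by which companion points detect that $\rho_{z,p}$ is split (equivalently, $f$ is ``critical'' in the sense of the Introduction). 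But $\rho_{z,p}$ split with a non-critical-slope refinement contradicts the Newton--above--Hodge inequality / the weak admissibility bound: the non-critical slope condition $\val_p(p\psi_{z,1}(p))<1-k_2$ says exactly that the chosen eigenvalue of $U_p$ has small enough valuation that the refinement cannot be the ``critical'' one, so the companion eigenvector cannot exist.

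Concretely, the cleanest route is probably the cohomological one used in \cite[Prop. 4.5.5]{Em1}: the existence of the companion point $z^c$, together with $z$ classical, would via the adjunction formula and Theorem \ref{thm: clp-cco}(2) give rise to a nonzero class in $H^1_{\et,c}(K^p,\cF_{\alg(k,w)})_{\overline{\rho}}$ mapping in a way incompatible with the Hodge--Tate weights unless $\val_p(\alpha)=k+1$ (critical slope). Running this contrapositively: non-critical slope $\Rightarrow$ $\val_p(\alpha)<k+1$ $\Rightarrow$ no companion point $\Rightarrow$ $z$ not critical. I would then simply cite \cite[Prop. 4.5.5]{Em1} and \cite[Def. 4.5.4]{Em1} for the identification ``bad point $=$ point admitting a companion point,'' as the excerpt already flags, and check that the hypotheses of \emph{loc. cit.} (absolute irreducibility of $\overline{\rho}$, semisimplicity of the $\cH^p$-action on the locally algebraic part) are in force here — they are, by our standing assumptions. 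The main obstacle, and the only genuinely substantive point, is verifying that the slope inequality in Definition \ref{class00}(2) is exactly the numerical condition appearing in Emerton's criterion (a twist/normalization bookkeeping issue between $\val_p(\alpha)$, $\val_p(\psi_{z,1}(p))$, and the shift $x^{-w-1}\otimes x^{w+1}$); once that dictionary is fixed, the proposition is immediate from \cite[Prop. 4.5.5]{Em1}.
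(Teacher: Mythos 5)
Your bottom line --- prove the proposition by citing \cite[Prop.~4.5.5]{Em1} together with \cite[Def.~4.5.4]{Em1}, the only substantive check being the dictionary between the slope inequality of Definition~\ref{class00}~(2) and Emerton's numerical criterion --- coincides with the paper, whose entire proof is exactly that citation. But the reconstruction you offer of what lies behind the citation is not a correct argument. You propose the chain: companion point $\Rightarrow$ $\rho_{z,p}$ split $\Rightarrow$ contradiction with non-critical slope via weak admissibility. The first implication is the hard direction of Theorem~\ref{ThmBE}, i.e.\ of the Breuil--Emerton local-global compatibility theorem \cite{BE}; that result sits logically downstream of elementary non-criticality statements such as the present one, so invoking it here is backwards and vastly heavier than needed. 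The second implication is false as stated: a split crystalline $\rho_{z,p}$ always admits an ordinary refinement, whose $U_p$-eigenvalue has valuation $0$ and which is therefore of non-critical slope, so ``$\rho_{z,p}$ split'' is perfectly compatible with ``$z$ of non-critical slope'' and produces no contradiction. What forces $\val_p(\alpha)=k+1$ is not splitness alone but the finer fact that a companion point singles out the refinement attached to the wrong line of the Hodge filtration --- and that again is the content of \cite{BE}, not of weak admissibility.

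The actual mechanism behind \cite[Prop.~4.5.5]{Em1} is an integrality estimate with no Galois input. The operator $\varpi_1$ acts on $N_0$-invariants as $\delta_B(\varpi_1)=p^{-1}$ times a double coset operator preserving the lattice coming from $\widetilde{H}^1_{\et,c}(K^p,\co_E)_{\overline{\rho}}$, so every point $(\chi_1\otimes\chi_2,\lambda)\in\cS$ satisfies $\val_p(\chi_1(p))\geq -1$. For the putative companion point one computes $\chi_{z,1}^c=\psi_{z,1}x^{k_2-1}$, whence $\val_p(\chi_{z,1}^c(p))=\val_p(p\psi_{z,1}(p))-(1-k_2)-1$, and the non-critical slope inequality $\val_p(p\psi_{z,1}(p))<1-k_2$ says precisely that this is $<-1$. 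Hence $(\chi_z^c,\lambda_z)\notin\cS$ and $z$ admits no companion point. This two-line computation is exactly the dictionary you correctly flagged as the one substantive point; the Galois-theoretic detour should be deleted.
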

\begin{theorem}[$\text{\cite{BE}}$]\label{ThmBE}The point $z$ is critical if and only if the Galois representation $\rho_{z,p}:=\rho_z|_{\Gal_{\Q_p}}$ splits.
\end{theorem}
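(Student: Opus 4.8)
The plan is to transport the statement into $p$-adic Hodge theory. Since $z$ is classical and $\chi_z$ is locally algebraic of dominant weight $k:=\wt(z)\geq 0$, classicality at $p$ gives that $\rho_{z,p}$ is crystalline up to a twist, with two distinct Hodge--Tate weights $0$ and $k+1$; moreover the data carried by $z$ beyond $\lambda_z$ and the weight amount to a \emph{refinement} of $\rho_{z,p}$, i.e.\ an ordering of the two crystalline Frobenius eigenvalues, equivalently a $\varphi$-stable line $\cL_z\subseteq D_{\cris}(\rho_{z,p})$, equivalently a saturated triangulation $0\to\cR_E(\delta_1)\to D_{\rig}(\rho_{z,p})\to\cR_E(\delta_2)\to 0$ whose parameters $\delta_1,\delta_2$ are explicit twists of $\chi_{z,1},\chi_{z,2}$ by integral powers of $x$. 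I would first dispose of the degenerate case of a repeated Frobenius eigenvalue: it has slope $(k+1)/2<k+1$, hence is of non-critical slope and, by Proposition \ref{sloNC}, not critical, while a short argument shows $\rho_{z,p}$ is then non-split.

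Next I would reformulate ``$z$ admits a companion point''. As $z^c=(\chi_z^c,\lambda_z)$ carries the same Hecke system we have $\rho_{z^c}=\rho_z$, and unwinding $\chi_z^c=\chi_z(x^{-k-1}\otimes x^{k+1})$ reduces the statement $z^c\in\cS$ to the non-vanishing of $J_B\big(\widetilde{H}^1_{\et,c}(K^p,E)_{\overline{\rho}}^{\an}\big)[T(\Q_p)=\chi_z^c,\cH^p=\lambda_z]$. By Emerton's local--global compatibility the relevant Hecke eigenspace of $\widetilde{H}^1_{\et,c}(K^p,E)_{\overline{\rho}}^{\an}$ is built from the locally analytic vectors of the Banach representation $\Pi(\rho_{z,p})$ attached to $\rho_{z,p}$ by the $p$-adic local Langlands correspondence for $\GL_2(\Q_p)$, and via the adjunction property of the Jacquet--Emerton functor together with Breuil's and Colmez's computation of $J_B\big(\Pi(\rho_{z,p})^{\an}\big)$, the occurrence of $\chi_z^c$ there is equivalent to $D_{\rig}(\rho_{z,p})$ carrying a triangulation with sub $\cR_E(\delta_1 x^{-k-1})$. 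Since $\delta_1 x^{-k-1}$ has weight strictly below $\delta_1$, such a triangulation is the non-saturated one $t^{k+1}\cR_E(\delta_1)$, and a filtered $\varphi$-module computation identifies the condition that it actually contributes to the Jacquet module with the condition that $\Fil^{k+1}D_{\cris}(\rho_{z,p})$ be a $\varphi$-eigenline. To conclude I would observe that, for a rank-$2$ crystalline module with distinct Hodge--Tate weights $0<k+1$ and distinct Frobenius eigenvalues, $\Fil^{k+1}$ being $\varphi$-stable forces it to be a weakly admissible filtered $\varphi$-submodule whose quotient has Hodge--Tate weight $0$, so that $D_{\cris}(\rho_{z,p})$, hence $\rho_{z,p}$, splits; the converse is immediate from the shape of the Hodge filtration of a direct sum of two crystalline characters. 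Chaining these equivalences gives both directions.

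The step I expect to be the real obstacle is the middle one --- showing that the candidate triangulation genuinely contributes to $J_B$ exactly when $\rho_{z,p}$ splits. The mere existence of the inclusion $\cR_E(\delta_1 x^{-k-1})=t^{k+1}\cR_E(\delta_1)\hookrightarrow D_{\rig}(\rho_{z,p})$ is automatic and carries no information; what must be shown is that the corresponding \emph{locally analytic} (not merely locally algebraic) Jacquet eigenvector exists, equivalently that this triangulation propagates in a family over the weight space near $z$, and it is precisely here that the reducibility --- indeed the splitting --- of $\rho_{z,p}$ is used. This also forces one to have the adjunction property available for the non-classical (anti-dominant) character $\chi_z^c$, which is the setting of \S\ref{sec: clp-1.3}; the remaining ingredients --- crystallinity at a classical dominant point, the classification of triangulations of a rank-$2$ crystalline $(\varphi,\Gamma)$-module, the Breuil--Colmez description of $J_B\big(\Pi(\rho_{z,p})^{\an}\big)$, and the filtered $\varphi$-module computation --- are standard.
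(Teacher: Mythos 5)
The paper contains no proof of Theorem \ref{ThmBE}: the statement is imported verbatim from \cite{BE}, so the only question is whether your sketch would stand as an independent proof. It would not, for the reason you yourself identify. The reductions at either end are sound --- translating ``$z$ admits a companion point'' into the occurrence of $\chi_z^c$ in $J_B\big(\widetilde{H}^1_{\et,c}(K^p,E)_{\overline{\rho}}^{\an}\big)[\cH^p=\lambda_z]$, and the filtered $\varphi$-module computation showing that $\varphi$-stability of $\Fil^{k+1}D_{\mathrm{cris}}(\rho_{z,p})$ forces splitting (your disposal of the repeated-eigenvalue case is also fine). But the middle step, which you flag as ``the real obstacle'', carries the entire content of the theorem, and the tools you propose for it are not legitimately available here: Emerton's local--global compatibility \cite{Em4} in the reducible/ordinary case is established \emph{using} \cite{BE}, and the determination of $J_B\big(\Pi(\rho_{z,p})^{\an}\big)$ for reducible crystalline $\rho_{z,p}$ belongs to the same circle of results. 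As written the argument is therefore circular, and no substitute for the decisive step is supplied. For the record, \cite{BE} avoids $\Pi(V)$ altogether: they apply the ordinary-part functor $\mathrm{Ord}_B$ to $\widetilde{H}^1_{\et,c}(K^p,E)_{\overline{\rho}}$ and use Hida theory to identify the characters of $T(\Q_p)$ occurring in its $\lambda_z$-eigenspace with the $\Gal_{\Q_p}$-stable rank-one filtrations of $\rho_{z,p}$; a non-split reducible $\rho_{z,p}$ admits exactly one such filtration (yielding only the classical eigenvector), a split one admits two (the second yielding the companion eigenvector).

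A further point worth recording: your chain of equivalences, if completed, terminates at ``$\Fil^{k+1}D_{\mathrm{cris}}(\rho_{z,p})$ equals the $\varphi$-eigenline singled out by the refinement $z$'', which is strictly stronger than ``$\rho_{z,p}$ splits''. For the unit-root (non-critical-slope) stabilization of a form with split $\rho_{f,p}$ the local representation splits, yet no companion point exists by Proposition \ref{sloNC}. So what your argument would prove is the statement actually in \cite{BE}, which concerns the critical-slope stabilization, rather than the literal ``if and only if'' of Theorem \ref{ThmBE}; the latter requires that caveat to be read correctly. This is a defect of the statement rather than of your strategy, but you should not assert that ``chaining these equivalences gives both directions'' of the theorem as printed.
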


We recall the relation between Emerton's eigensurface $\cS$ and Coleman-Mazur eigencurve.
Let $\cW$ denote the rigid space parameterizing locally analytic characters of $\Z_p^{\times}$, the decomposition
\begin{equation}\label{decomp0}
  T(\Q_p)\cong \begin{pmatrix}
    \Z_p^{\times} & 0 \\ 0 & 1
  \end{pmatrix}\times \begin{pmatrix}
    1 & 0 \\ 0 & \Z_p^{\times}
  \end{pmatrix}\times \begin{pmatrix}
    p & 0 \\ 0 & 1
  \end{pmatrix}^{\Z} \times \begin{pmatrix}
    1 & 0 \\ 0 & p
  \end{pmatrix}^{\Z}
\end{equation}
induces $\widehat{T} \cong \cW_+\times \cW_-\times \bG_m \times \bG_m$ with $\cW_{\pm}\cong \cW$. The trivial character on $\cW_+$ induces an injection
\begin{equation*}
  \cW_-\times \bG_m \times \bG_m\hooklongrightarrow \widehat{T}.
\end{equation*}
Denote by $\cC$ the pull-back of $\cS$ over $\cW_-\times \bG_m \times \bG_m$, by $\kappa$ the induced map $\cC \ra \cW_-\cong \cW$. We still use $\cM$ to denote the pull-back of the $\co(\cS)$-module $\cM$ over $\cC$. We have
\begin{equation}\label{curve0}
  \cM(\cC)\cong \big(J_B(\widetilde{H}^1_{\et,c}(K^p,E)_{\overline{\rho}}^{\an})^{T_1}\big)^{\vee}_b
\end{equation}
where $T_1= \begin{pmatrix}
    \Z_p^{\times} & 0 \\ 0 & 1
  \end{pmatrix}$. By  \cite[Prop. 4.4.6]{Em1}, we have $\cS\cong \cC\times \cW$. Moreover, one can show that Proposition \ref{prop: eigna}, Theorem \ref{thm: clp-eac} and Remark \ref{dens00} also hold for $(\cC, \cM)$ (except that $\cC$ is equidimensional of dimension $1$). By \cite[Prop. 4.4.2]{Em1}, we have an explicit relation  between the classical points of $\cC$ and those of Coleman-Mazur eigencurve (constructed from the finite slope overconvergent modular forms). Using the reducedness of $\cC$, the density of (very) classical points and the same argument as in \cite[Prop. 7.2.8]{BCh}, one can deduce that $\cC$ is isomorphic to the corresponding Coleman-Mazur eigencurve. Finally we have by \cite[Prop. 4.5.5]{Em1}:
\begin{proposition}\label{prop: clp-neigh}
Let $z=(\chi_{z,1}\otimes \chi_{z,2},\lambda_z)\in \cC$, then there exists an admissible open $U\subset \cS$ containing $z$ such that any point in $U\setminus \{z\}$ does not admit companion points in $\cS$.
\end{proposition}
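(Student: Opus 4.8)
The plan is to reduce the proposition to the statement that the locus $Z\subseteq\cS$ of points admitting a companion point is discrete near $z$; once this is known, one simply takes $U$ to be an admissible open neighbourhood of $z$ with $Z\cap U\subseteq\{z\}$. The basic observation is that $Z$ is carved out of $\cS$ by closed conditions. The rule $y\mapsto\big(\chi_y(x^{-\wt(y)-1}\otimes x^{\wt(y)+1}),\lambda_y\big)$ makes sense for every $y\in\cS$ (because $\wt(y)$ varies rigid-analytically), and defines a morphism $\theta$ from $\cS$ into the ambient rigid space in which $\cS$ sits as a closed analytic subvariety --- namely $\widehat T$ together with the affine space attached to $\cH^p$, the embedding being given by $\kappa_1$ and the $\cH^p$-action and its image being closed by the finiteness statements of Proposition \ref{prop: eigna}. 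A point $y$ admits a companion point precisely when $\wt(y)\in\Z_{\geq 0}$ and $\theta(y)\in\cS$, so $Z=\theta^{-1}(\cS)\cap\wt^{-1}(\Z_{\geq 0})$; in particular $Z$ is contained in the closed analytic subvariety $\theta^{-1}(\cS)$ of $\cS$.

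Next I would bound the dimension of $\theta^{-1}(\cS)$. By Proposition \ref{sloNC} (combined with Proposition \ref{prop: clp-cla}, which makes points of non-critical slope classical) no point of non-critical slope lies in $Z$; since by Theorem \ref{thm: clp-eac}(2) these points are Zariski-dense in the reduced, equidimensional, $2$-dimensional space $\cS$, the subvariety $\theta^{-1}(\cS)$ contains no irreducible component of $\cS$, hence has dimension $\leq 1$. To pass from ``$\leq 1$'' to ``$0$ near $z$'', note that $Z\subseteq\wt^{-1}(\Z_{\geq 0})=\bigsqcup_{n\geq 0}\wt^{-1}(n)$, and that any irreducible curve $V'$ through $z$ on which $Z$ accumulates at $z$ must satisfy: either $\wt$ is constant on $V'$, so $V'\subseteq\wt^{-1}(n)$ for some $n$ and then $V'\subseteq Z$; or $\wt|_{V'}$ is non-constant, in which case $\rho_{\cdot,p}$ must split along $V'$ (a second triangulation of a non-split $\rho_{\cdot,p}$ forces $\wt(\cdot)\in\Z_{\geq 0}$, and a rigid function valued in the discrete set $\Z_{\geq 0}$ on a connected curve is constant). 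So it is enough to rule out both possibilities, and both amount to checking that no irreducible component of $\wt^{-1}(n)$ is contained in $Z$. Each such component maps finitely and surjectively onto an irreducible component of the curve $\{(\mu_1,\mu_2)\in\widehat T_0:\wt(\mu_1)-\wt(\mu_2)=n\}$, on which locally algebraic weights are Zariski-dense; feeding these weights into the accumulation property of Theorem \ref{thm: clp-eac}(2) produces, densely on the component, classical points of non-critical slope, and by Proposition \ref{sloNC} these are not in $Z$. Hence $\theta^{-1}(\cS)$ is $0$-dimensional near $z$, i.e. $Z$ is discrete near $z$, and one finishes by shrinking $U$.

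The step I expect to be the real obstacle is the last one: producing, densely on each irreducible component of the fixed-weight curve $\wt^{-1}(n)$, classical points of non-critical slope --- equivalently, knowing that $\rho_{\cdot,p}$ does not split along any curve through $z$. This requires the standard but delicate refinement of the accumulation property of Theorem \ref{thm: clp-eac}(2) in which one prescribes the weight inside the slice $\{\wt(\mu_1)-\wt(\mu_2)=n\}$ while keeping the slope small enough to apply Coleman's classicality. When $z$ itself is critical this can be arranged more cheaply by invoking Bella\"iche's smoothness of the eigencurve at critical points: then $\cS$ is regular --- hence locally irreducible --- at $z$, so $\theta^{-1}(\cS)$ is near $z$ either the whole local branch (impossible, since non-critical-slope points accumulate at the locally algebraic point $z$ by Theorem \ref{thm: clp-eac}(2)) or discrete; and when $z$ is not critical the closedness of $\theta^{-1}(\cS)$ already gives a companion-point-free neighbourhood. (This is essentially the argument of \cite[Prop. 4.5.5]{Em1}.)
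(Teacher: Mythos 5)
The paper gives no argument for this proposition beyond the citation of \cite[Prop.\ 4.5.5]{Em1}, so your proposal has to be judged against Emerton's proof, and it has two genuine gaps. The first is structural. The map $\theta$ is not a rigid-analytic morphism: $x^{s}$ for non-integral $s$ is not a character of $\Q_p^{\times}$, and even on $\Z_p^{\times}$ the analytic interpolation $\langle x\rangle^{s}$ differs from $x^{s}$ at integers $s=n$ by a Teichm\"uller twist depending on $n$ modulo $p-1$. So all you really have is that $Z\cap \wt^{-1}(n)$ is Zariski-closed in the curve $\wt^{-1}(n)$ for each fixed $n$; $Z$ itself is a countable union of closed sets, not a closed subvariety, and your dichotomy ``either an irreducible curve $V'\subseteq \overline{Z}$ passes through $z$, or $Z$ is discrete at $z$'' is unjustified. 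This matters because infinitely many slices $\wt^{-1}(n)$ meet every admissible neighbourhood of $z$ ($\Z_{\geq 0}$ accumulates $p$-adically at $k=\wt(z)$), so bad points sitting on infinitely many distinct slices could accumulate at $z$ without any curve being contained in $Z$. The missing ingredient is a slope bound: if $y$ and $y^{c}$ both lie on $\cS$, integrality of the $U_p$-eigenvalue of $y^{c}$ forces $\val_p(\chi_{y,1}(p))\geq \wt(y)+1+c_0$ for a universal constant $c_0$, while $\val_p(\chi_{y,1}(p))$ is bounded above on an affinoid neighbourhood of $z$ by Proposition \ref{prop: eigna} (the operator $\varpi_1$ is invertible on the finite module $\cM(\cU_i)$). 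This confines $\wt(y)$ to a finite set of integers and is the reason only finitely many slices can carry bad points near $z$; nothing of this kind appears in your argument.

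The second gap is the one you flag yourself, and neither of your workarounds repairs it. The accumulation mechanism of Theorem \ref{thm: clp-eac}~(2) produces non-critical-slope points by letting $k_1-k_2$ grow against a bounded slope; on the slice $\wt^{-1}(n)$ this difference is frozen at $n$, so the ``refinement'' you appeal to is not available (the critical-slope locus is precisely where it fails pointwise). The smoothness fallback is a false dichotomy: a Zariski-closed subset of a two-dimensional irreducible germ need not be the whole germ or a finite set of points --- it can be a curve through $z$ --- so local irreducibility at $z$ decides nothing; moreover Bella\"iche's smoothness is only available for classical critical points satisfying extra hypotheses, whereas the proposition concerns an arbitrary $z\in\cC$, possibly non-classical. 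What actually rescues the fixed-weight slices is the product structure $\cS\cong\cC\times\cW$: the $\cW$-twist is supported on $\begin{pmatrix}\Z_p^{\times}&0\\0&1\end{pmatrix}$, hence changes neither $\chi_{1}(p)$ nor the non-critical-slope condition of Definition \ref{class00}~(2), and the projection $\wt^{-1}(n)\to\cC$ is quasi-finite; since $z\in\wt^{-1}(n)$ forces $\chi_z$ to be locally algebraic of dominant weight, the non-critical-slope points accumulate at $z$ on the curve $\cC$ and lift to a Zariski-dense subset of every irreducible component of $\wt^{-1}(n)$ through $z$, which Proposition \ref{sloNC} then excludes from $Z$. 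As written, your proof does not close either of these two gaps.
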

\section{Adjunctions}\label{sec: clp-1.3}
In this section, we study some adjunction properties of the Jacquet-Emerton functor.
Let $z=(\chi_z=\chi_{z,1}\otimes \chi_{z,2},\lambda_z)$ be an $E$-point of $\cC$.
By Proposition \ref{prop: eigna} (with $\cS$ replaced by $\cC$ as discussed  below Theorem \ref{ThmBE}) and Proposition \ref{prop: clp-neigh}, there exists an affinoid neighborhood $\cU_0$ of $z$ in $\cC$ such that
\begin{itemize}
\item $\kappa: \cU_0 \ra \kappa(\cU_0)$ is a finite morphism of affinoids;
\item $\cM(\cU_0)$ is finitely generated locally free over $\co(\kappa(\cU_0))$;
\item any $z'\in \cU_0(\overline{E})\setminus \{z\}$ does not admit companion points;
\item $\kappa^{-1}(\kappa(z))^{\red}=\{z\}$.
\end{itemize}
Let $\cU:=\kappa^{-1}(\cV)$ with $\cV$ an admissible strictly quasi-Stein  neighborhood of $\kappa(z)$ in $\kappa(\cU_0)$ (cf. \cite[Def. 2.1.17 (iv)]{Em04}. Thus $\cU$ also satisfies the above listed properties.  Since $\cM(\cU)$ is a finitely generated locally free $\co(\cV)$-module and $\cV$ is strictly quasi-Stein, we have that $\cM(\cU)^{\vee}_b$ is an \emph{allowable} locally analytic representation of $T(\Q_p)$ in the sense of \cite[Def. 0.11]{Em2} (e.g. using similar arguments as in \cite[Ex. 6.3.15 (ii)]{Ding}). Note also $\cM(\cU)^{\vee}_b$ is naturally equipped with a continuous action of $\cH^p$.
The restriction maps  $\cM(\cS) \ra \cM(\cC)\ra \cM(\cU)$ induce $\cH^p$-invariant morphisms of locally analytic $T(\Q_p)$-representations:
\begin{equation}\label{cplf: inja}\cM(\cU)^{\vee}_b \lra \cM(\cC)^{\vee}_b\lra \cM(\cS)^{\vee}_b\cong J_B\big(\widetilde{H}^1_{\et,c}(K^p,E)_{\overline{\rho}}^{\an}\big).
\end{equation}
When $z$ does not have companion points (hence all points in $\cU$ do not admit companion points by assumption), one can prove as in \cite[Lem. 4.5.12]{Em1} that the composition in   (\ref{cplf: inja}) is \emph{balanced} in the sense of  \cite[Def. 0.8]{Em2} and induces an $\cH^p$-invariant morphism of locally analytic $\GL_2(\Q_p)$-representations
\begin{equation}\label{equ: clp-seB}
  \big(\Ind_{\overline{B}(\Q_p)}^{\GL_2(\Q_p)} \cM(\cU)^{\vee}_b\otimes_E \delta_B^{-1}\big)^{\an} \lra \widetilde{H}^1_{\et,c}(K^p,E)_{\overline{\rho}}^{\an},
\end{equation}
where $\delta_B:=\unr(p^{-1})\otimes \unr(p)$ is the modulus character of $B(\Q_p)$ (which we also view as a character of $\overline{B}(\Q_p)$ via $\overline{B}(\Q_p)\twoheadrightarrow T(\Q_p)$). The morphism (\ref{equ: clp-seB}) plays a crucial role in Emerton's construction of two-variable $p$-adic $L$-functions  (cf. \cite[Thm. 4.5.7]{Em1}). However, if $z$ admits a companion point (e.g. if $z$ is critical), then (\ref{cplf: inja}) does not (directly) induce a such morphism (e.g. see Proposition \ref{prop: clp-ntr} below). The main result in this section is to establish a similar adjunction result in the locus of such  $z$.

Suppose henceforward $k:=\wt(\chi_{z,1})-\wt(\chi_{z,2})\in \Z_{\geq 0}$ (note $\wt(\chi_{z,1})=0$ by (\ref{curve0})), and let $h=\begin{pmatrix}
  1 & 0 \\ 0 & -1
\end{pmatrix}\in \ft$.  We have a natural injection $\ft\hookrightarrow  \co(\widehat{T}_0)$, and we view hence $h$ as an element in $\co(\widehat{T}_0)$. By (\ref{curve0}), the $\ft$-action on $\cM(\cU)^{\vee}_b$ factors through the projection $\ft=\begin{pmatrix} \Q_p & 0 \\ 0 & \Q_p\end{pmatrix} \twoheadrightarrow \Q_p$ onto the bottom right factor. Shrinking $\cV$ (and hence $\cU$), we assume $\kappa(z)$ is the only point in $\cV$ such that the associated character of $\Z_p^{\times}$ is of weight $-k$ (noting $\kappa(z)=\chi_{z,2}$).
 Consider $\cM(\cU)^{\vee}_b[h=k]$ which is a finite dimensional $E$-vector space, and is a  $T(\Q_p)\times \cH^p$-stable subspace of $\cM(\cU)^{\vee}_b$. By assumption, we have in fact: 
\begin{equation}\label{equ: clp-fiber}
  \cM(\cU)^{\vee}_b[h=k]=J_B\big(\widetilde{H}^1_{\et,c}(K^p,E)_{\overline{\rho}}^{\an}\big)[\ft=d\chi_z]\{T(\Q_p)=\chi_z, \cH^p=\lambda_z\}.
\end{equation}
We call a vector $v\in J_B\big(\widetilde{H}^1_{\et,c}(K^p,E)_{\overline{\rho}}^{\an}\big)$ \emph{classical} if $v$ lies in $J_B\big(\widetilde{H}^1_{\et,c}(K^p,E)_{\overline{\rho}}^{\lalg}\big)$ (compare with Definition \ref{class00}), and we call a vector $v\in \cM(\cU)^{\vee}_b$ classical if it is sent to  a classical vector in $J_B\big(\widetilde{H}^1_{\et,c}(K^p,E)_{\overline{\rho}}^{\an}\big)$ via the map (\ref{cplf: inja}).

\begin{lemma}\label{lem: clp-ber}
Let $\gamma \in \End_{T(\Q_p)}\big(\cM(\cU)^{\vee}_b\big)$, the composition
\begin{equation}\label{equ: clp-wUe}
  \cM(\cU)^{\vee}_b \xlongrightarrow{\gamma} \cM(\cU)^{\vee}_b \xlongrightarrow{\text{(\ref{cplf: inja})}} J_B\big(\widetilde{H}^1_{\et,c}(K^p,E)_{\overline{\rho}}^{\an}\big)
\end{equation}
is balanced  if and only if any vector in $\gamma(\cM(\cU)^{\vee}_b[h=k])$ is classical.
\end{lemma}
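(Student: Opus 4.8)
The plan is to reduce the balancedness of the composition in (\ref{equ: clp-wUe}) to a statement about the finite-dimensional subspace $\cM(\cU)^{\vee}_b[h=k]$, using the description (\ref{equ: clp-fiber}). Recall that for a map $\phi\colon \cM(\cU)^{\vee}_b \to J_B(\widetilde{H}^1_{\et,c}(K^p,E)_{\overline{\rho}}^{\an})$ to be balanced in the sense of \cite[Def. 0.8]{Em2}, one needs, on top of it being $T(\Q_p)$-equivariant and $\co(\cV)$-linear-ish compatibility, that the image of any $\ug$-highest-weight-type vector (more precisely, any vector on which $\fn$ acts appropriately after twisting) actually comes from a locally algebraic vector so that the induced $\overline{B}$-representation can be parabolically induced up to a $\GL_2(\Q_p)$-representation landing in the completed cohomology. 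The key arithmetic input is that, since $\wt(\chi_{z,1})=0$ and $\wt(\chi_{z,1})-\wt(\chi_{z,2})=k\in\Z_{\geq 0}$, the character $\chi_z$ is locally algebraic of dominant weight, and the only obstruction to extending the adjunction of \cite[Lem. 4.5.12]{Em1} is precisely the (generalized) eigenspace sitting over $h=k$; away from this weight everything is governed by the non-critical-slope argument and poses no problem.

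First I would unwind the definition of balanced from \cite[Def. 0.8]{Em2} in this concrete setting, isolating the condition to the requirement that for every vector $v$ in the span of the $\fn$-stable line corresponding to the weight $d\chi_z$ — which, by the factorization of the $\ft$-action through the bottom-right $\Q_p$ recorded just above (\ref{equ: clp-fiber}), is exactly $\cM(\cU)^{\vee}_b[h=k]$ — the image $\gamma(v)$, viewed in $J_B(\widetilde{H}^1_{\et,c}(K^p,E)_{\overline{\rho}}^{\an})$, lies in the locally algebraic part $J_B(\widetilde{H}^1_{\et,c}(K^p,E)_{\overline{\rho}}^{\lalg})$. This is where (\ref{equ: clp-fiber}) is used: the subspace $\cM(\cU)^{\vee}_b[h=k]$ is identified with the $d\chi_z$-eigenspace for $\ft$ inside the generalized $(\chi_z,\lambda_z)$-eigenspace of $J_B$ of the full analytic completed cohomology, so balancedness becomes a statement purely about where these finitely many vectors are sent — and by definition that is the statement that every vector in $\gamma(\cM(\cU)^{\vee}_b[h=k])$ is classical. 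Conversely, if every such vector is classical, then the standard computation of \cite[Lem. 4.5.12]{Em1} — which only needed the target of the relevant weight vectors to be locally algebraic — goes through verbatim to show the composition is balanced; the non-critical-slope points are Zariski-dense in $\cU$ by Theorem \ref{thm: clp-eac}(2) (in the $\cC$ version), and one checks balancedness by the same density/continuity argument, the only new point being the behavior at $z$ itself, handled by the classicality hypothesis.

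The main obstacle I anticipate is the "only if" direction, i.e. showing that balancedness \emph{forces} classicality of $\gamma(\cM(\cU)^{\vee}_b[h=k])$. Here one must argue that a non-classical vector of weight $d\chi_z$ in the image cannot be accommodated by a morphism of the form required in the definition of balanced: concretely, such a vector would have to generate, under $\ug$, a locally analytic $\GL_2(\Q_p)$-subrepresentation receiving a map from a locally analytic principal series $(\Ind_{\overline{B}(\Q_p)}^{\GL_2(\Q_p)} \cM(\cU)^{\vee}_b\otimes_E\delta_B^{-1})^{\an}$, and one uses the structure theory of such principal series (the analogue of the analysis behind \cite[Prop. 4.5.5]{Em1} and the companion-point phenomenon, cf. Theorem \ref{ThmBE}) to see that the $\fn^-$-socle constraints in the balanced condition pin the image down to the locally algebraic vectors. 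I would also need to be careful that "classical" as defined in the excerpt (being sent into $J_B(\widetilde{H}^1_{\et,c}(K^p,E)_{\overline{\rho}}^{\lalg})$ via (\ref{cplf: inja})) matches the condition that naturally drops out of the balancedness computation; this should follow because the map (\ref{cplf: inja}) is injective on the relevant finite-dimensional eigenspaces and is compatible with the locally algebraic subspaces on both sides, but verifying this compatibility cleanly is the technical heart of the argument.
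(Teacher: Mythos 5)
Your overall skeleton is right --- balancedness reduces to a condition on the finite-dimensional weight spaces $\cM(\cU)^{\vee}_b[h=k']$, and the only problematic weight is $k'=k$ --- but the two steps that actually carry the proof are missing. First, you assert that unwinding \cite[Def. 0.8]{Em2} already ``isolates the condition'' to the requirement that $\gamma(v)$ be classical for $v$ of weight $k$; that is the statement of the lemma, not the definition. What the definition gives (via the argument of \cite[Lem. 4.5.12]{Em1}) is that the composition is balanced if and only if, for every $k'\in\Z_{\geq 0}$, the image of $\cM(\cU)^{\vee}_b[h=k']$ under the canonical lifting into $(\widetilde{H}^1_{\et,c}(K^p,E)_{\overline{\rho}}^{\an})^{N_0}$ is annihilated by $X_-^{k'+1}$. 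The bridge from this $\text{U}(\ug)$-condition to classicality at $k'=k$ is a concrete highest weight computation: a vector $v$ fixed by $N_0$ with $\ft v=d\chi_z v$ satisfies $X_-^{k+1}\cdot v=0$ if and only if $\text{U}(\ug)v\cong(\Sym^{k}E^2)^{\vee}$, i.e.\ if and only if $v$ is classical. This single equivalence settles both directions simultaneously; your proposed detour through the ``structure theory of principal series'' and ``$\fn^-$-socle constraints'' for the only-if direction is not developed and is not needed.

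Second, the weights $k'\neq k$ are not disposed of by a ``non-critical-slope argument'' or by Zariski-density and continuity (balancedness is not a condition one checks pointwise on a dense set of classical points). The correct mechanism is the companion-point argument built into the choice of $\cU$: decomposing the finite-dimensional $T(\Q_p)\times\cH^p$-stable space $\cM(\cU)^{\vee}_b[h=k']$ into eigenvectors $v$ for systems $(\chi',\lambda')\in\cU$, one finds (by \cite[Lem. 7.3.15]{Ding}, a variant of \cite[Prop. 4.4.4]{Em11}) that $X_-^{k'+1}\cdot v$ is a generalized $((\chi')^c,\lambda')$-eigenvector in $(\widetilde{H}^1_{\et,c}(K^p,E)_{\overline{\rho}}^{\an})^{N_0}$; if it were nonzero, $((\chi')^c,\lambda')$ would be a point of $\cS$, contradicting that every point of $\cU$ other than $z$ admits no companion point (and for $k'\neq k$ the point $(\chi',\lambda')$ cannot be $z$, since $\kappa(z)$ is the unique weight $-k$ point of $\cV$). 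Without these two ingredients your argument does not close.
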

\begin{proof}
By definition and the same argument as in the proof of \cite[Lem. 4.5.12]{Em1}, (\ref{equ: clp-wUe}) is balanced if and only if for any $k'\in \Z_{\geq 0}$, the image of the composition
\begin{equation}\label{equ: clp-oge}
    \cM(\cU)^{\vee}_b[h=k'] \xlongrightarrow{\text{(\ref{equ: clp-wUe})}} J_B\big(\widetilde{H}^1_{\et,c}(K^p,E)_{\overline{\rho}}^{\an}\big)   \longrightarrow (\widetilde{H}^1_{\et,c}(K^p,E)_{\overline{\rho}}^{\an})^{N_0} \hooklongrightarrow \widetilde{H}^1_{\et,c}(K^p,E)_{\overline{\rho}}^{\an}
\end{equation}
is annihilated by the operator $X_-^{k'+1}$ where $X_-:=\begin{pmatrix} 0 & 0 \\ 1 & 0 \end{pmatrix}\in \text{U}(\ug)$ naturally acts on the locally analytic representation $\widetilde{H}^1_{\et,c}(K^p,E)_{\overline{\rho}}^{\an}$, $N_0=N(\Z_p)=\begin{pmatrix}
  1 & \Z_p \\ 0 & 1
\end{pmatrix}$, and where the second map in  (\ref{equ: clp-oge}) is the ``canonical lifting" of \cite[(3.4.8)]{Em2} (with respect to $N_0$),  equivariant under the action of
\begin{equation*}T(\Q_p)^+:=\bigg\{\begin{pmatrix}
  a & 0 \\ 0 & d
\end{pmatrix}\in T(\Q_p)\ |\ a/d \in \Z_p\setminus \{0\}\bigg\}.
\end{equation*}

One can show as in the proof of \cite[Lem. 4.5.12]{Em1} that if $k'\neq k$, then any vector $v$ in the image of the following composition (which is obtained in the same way as (\ref{equ: clp-oge}) replacing (\ref{equ: clp-wUe}) by (\ref{cplf: inja}))
\begin{equation}\label{equ: hk'}
     \cM(\cU)^{\vee}_b[h=k'] \xlongrightarrow{\text{(\ref{cplf: inja})}} J_B\big(\widetilde{H}^1_{\et,c}(K^p,E)_{\overline{\rho}}^{\an}\big) \longrightarrow \widetilde{H}^1_{\et,c}(K^p,E)_{\overline{\rho}}^{\an}
\end{equation}
is annihilated by $X_-^{k'+1}$. Indeed, using the fact that  $\cM(\cU)^{\vee}_b[h=k']$ is finite dimensional (and $T(\Q_p) \times \cH^p$-stable), we can assume without loss of generality that $v$ is a $(\chi',\lambda')$-eigenvector for $T(\Q_p)^+\times \cH^p$ with $(\chi',\lambda')\in \cU$. In this case, by \cite[Lem. 7.3.15]{Ding} (which is an easy variation of \cite[Prop. 4.4.4]{Em11}), $X_-^{k'+1} \cdot v \in \big(\widetilde{H}^1_{\et,c}(K^p,E)_{\overline{\rho}}^{\an}\big)^{N_0}$ is a generalized $((\chi')^c, \lambda')$-eigenvector for $T(\Q_p)^+\times \cH^p$. If $X_-^{k'+1} \cdot v \neq 0$, we deduce $\big(\widetilde{H}^1_{\et,c}(K^p,E)_{\overline{\rho}}^{\an}\big)^{N_0}\{T(\Q_p)^+=(\chi')^c, \cH^p=\lambda'\}\neq 0$, and hence (see \cite[Prop. 3.4.9]{Em11} for the first equality):
\begin{equation*}
  J_B\big(\widetilde{H}^1_{\et,c}(K^p,E)_{\overline{\rho}}^{\an}\big)[T(\Q_p)=(\chi')^c, \cH^p=\lambda']
  =\big(\widetilde{H}^1_{\et,c}(K^p,E)_{\overline{\rho}}^{\an}\big)^{N_0}[T(\Q_p)^+=(\chi')^c, \cH^p=\lambda']\neq 0.
\end{equation*}
But this implies $((\chi')^c, \lambda')\in \cS$, contradicting the fact $(\chi',\lambda')$ does not admit any companion point. Since (\ref{equ: clp-oge}) factors through (\ref{equ: hk'}), we deduce the image of  (\ref{equ: clp-oge}) for $k'\neq k$ is also annihilated by $X_-^{k'+1}$.

Now it suffices  to show that for $k'=k$ and a vector $v$ in the image of (\ref{equ: clp-oge}),  $X_-^{k+1} \cdot v=0$ if and only $v$ is classical. Using the fact $v$ is fixed by $N_0$, $\ft v=d\chi_z v$ and the highest weight theory, it is not difficult to see that that followings are equivalent
\begin{itemize}
\item $v$ is classical,
\item $\text{U}(\ug) v\cong (\Sym^{k} E^2)^{\vee}$,
\item $X_-^{k+1} \cdot v=0$.
\end{itemize}
The lemma then follows.
\end{proof}
Recall that in \cite[(2.8)]{Em2}, Emerton introduced a closed $\GL_2(\Q_p)$-subrepresentation \begin{equation}\label{equ: cpl-IPG}I_{\overline{B}(\Q_p)}^{\GL_2(\Q_p)}\big(\cM(\cU)^{\vee}_b\otimes_E \delta_B^{-1}\big)\hooklongrightarrow \big(\Ind_{\overline{B}(\Q_p)}^{\GL_2(\Q_p)} \cM(\cU)^{\vee}_b\otimes_E \delta_B^{-1}\big)^{\an},
 \end{equation}
which, by \cite[Lem. 2.8.3]{Em2},  is the  closed $\GL_2(\Q_p)$-subrepresentation of $$\big(\Ind_{\overline{B}(\Q_p)}^{\GL_2(\Q_p)} \cM(\cU)^{\vee}_b\otimes_E \delta_B^{-1}\big)^{\an}$$ generated by $\cM(\cU)^{\vee}_b$ via the following composition (see \cite[Lem. 0.3]{Em2} for the first map, and \cite[(3.4.8)]{Em11} for the second map)
\begin{equation*}
  \cM(\cU)^{\vee}_b \hooklongrightarrow J_B\big(\big(\Ind_{\overline{B}(\Q_p)}^{\GL_2(\Q_p)} \cM(\cU)^{\vee}_b\otimes_E \delta_B^{-1}\big)^{\an}\big) \longrightarrow \big(\Ind_{\overline{B}(\Q_p)}^{\GL_2(\Q_p)} \cM(\cU)^{\vee}_b\otimes_E \delta_B^{-1}\big)^{\an}.
\end{equation*}
However, by the assumption that $\cM(\cU)$ is locally free over $\co(\kappa(\cU))$, one can  prove as in \cite[Lem. 4.5.12 (ii)]{Em1} that (\ref{equ: cpl-IPG}) is an isomorphism.
The following theorem thus follows from Emerton's adjunction formula \cite[Thm. 0.13]{Em2} combined with Lemma \ref{lem: clp-ber}.
\begin{theorem}\label{thm: clp-rps}
  Keep the notation of Lemma \ref{lem: clp-ber}, and suppose $\gamma$ commutes with $\cH^p$ (so that(\ref{equ: clp-wUe}) is $\cH^p$-equivariant), then the followings are equivalent:

  (i) there exists an $\cH^p$-equivariant morphism of locally analytic $\GL_2(\Q_p)$-representations
  \begin{equation*}
    \big(\Ind_{\overline{B}(\Q_p)}^{\GL_2(\Q_p)} \cM(\cU)^{\vee}_b\otimes_E \delta_B^{-1}\big)^{\an}\lra \widetilde{H}^1_{\et,c}(K^p,E)^{\an}_{\overline{\rho}}
  \end{equation*}
  such that the induced morphism
  \begin{equation}\label{Adj002}
    \cM(\cU)^{\vee}_b \hooklongrightarrow J_B\big(\big(\Ind_{\overline{B}(\Q_p)}^{\GL_2(\Q_p)} \cM(\cU)^{\vee}_b\otimes_E \delta_B^{-1}\big)^{\an}\big)\lra J_B\big(\widetilde{H}^1_{\et,c}(K^p,E)^{\an}_{\overline{\rho}}\big)
  \end{equation}
  is equal to (\ref{equ: clp-wUe});

  (ii) any vector in $\gamma(\cM(\cU)^{\vee}_b[h=k])$ is classical.
\end{theorem}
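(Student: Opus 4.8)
\emph{Proof plan.} The plan is to derive the equivalence formally, combining Emerton's adjunction formula \cite[Thm. 0.13]{Em2} with Lemma \ref{lem: clp-ber}; the substantive work has already been isolated in that lemma.

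First I would record that the composite (\ref{equ: clp-wUe}) is $\cH^p$-equivariant: this is immediate, since $\gamma$ commutes with $\cH^p$ by hypothesis and (\ref{cplf: inja}) is $\cH^p$-equivariant. Next I would check the hypotheses of \cite[Thm. 0.13]{Em2}: the source $\cM(\cU)^{\vee}_b$ is allowable in the sense of \cite[Def. 0.11]{Em2} (recorded in the discussion preceding Lemma \ref{lem: clp-ber}, using that $\cM(\cU)$ is finitely generated locally free over $\co(\cV)$ with $\cV$ strictly quasi-Stein), while $\widetilde{H}^1_{\et,c}(K^p,E)^{\an}_{\overline{\rho}}$ is very strongly admissible, being a closed $\GL_2(\Q_p)$-subrepresentation of the locally analytic vectors in an admissible Banach representation by Theorem \ref{thm: clp-cco}(1). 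Emerton's adjunction then supplies a natural isomorphism
\begin{equation*}
\Hom_{\GL_2(\Q_p)}\big(I_{\overline{B}(\Q_p)}^{\GL_2(\Q_p)}(\cM(\cU)^{\vee}_b\otimes_E\delta_B^{-1}),\,\widetilde{H}^1_{\et,c}(K^p,E)^{\an}_{\overline{\rho}}\big)\xlongrightarrow{\sim}\Hom^{\bal}_{T(\Q_p)}\big(\cM(\cU)^{\vee}_b,\,J_B(\widetilde{H}^1_{\et,c}(K^p,E)^{\an}_{\overline{\rho}})\big),
\end{equation*}
under which a $\GL_2(\Q_p)$-morphism on the left corresponds to the composite of the canonical inclusion $\cM(\cU)^{\vee}_b\hookrightarrow J_B(I_{\overline{B}(\Q_p)}^{\GL_2(\Q_p)}(\cdots))$ with $J_B$ applied to that morphism. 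Since $\cM(\cU)$ is locally free over $\co(\kappa(\cU))$, the inclusion (\ref{equ: cpl-IPG}) is an isomorphism (as recalled just before the theorem), so I may rewrite $I_{\overline{B}(\Q_p)}^{\GL_2(\Q_p)}(\cdots)$ as $(\Ind_{\overline{B}(\Q_p)}^{\GL_2(\Q_p)}\cdots)^{\an}$; under this identification the morphism attached to a $\GL_2(\Q_p)$-morphism is precisely (\ref{Adj002}). Finally, the adjunction isomorphism is manifestly compatible with the $\cH^p$-actions on the two Hom-spaces, hence restricts to a bijection between $\cH^p$-equivariant morphisms on each side.

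With this in hand the equivalence is a formality. Assuming (ii), Lemma \ref{lem: clp-ber} makes (\ref{equ: clp-wUe}) balanced, and it is $\cH^p$-equivariant by the first step, so feeding it through the adjunction isomorphism (in the direction from balanced $T(\Q_p)$-morphisms to $\GL_2(\Q_p)$-morphisms) yields an $\cH^p$-equivariant morphism $(\Ind_{\overline{B}(\Q_p)}^{\GL_2(\Q_p)}\cM(\cU)^{\vee}_b\otimes_E\delta_B^{-1})^{\an}\to\widetilde{H}^1_{\et,c}(K^p,E)^{\an}_{\overline{\rho}}$ whose associated morphism (\ref{Adj002}) is (\ref{equ: clp-wUe}); this is exactly (i). Conversely, assuming (i), the morphism (\ref{Adj002}) attached to the given $\GL_2(\Q_p)$-morphism lies in the image of the adjunction isomorphism, hence is balanced; since it equals (\ref{equ: clp-wUe}) by hypothesis, Lemma \ref{lem: clp-ber} gives (ii).

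I do not expect a genuine obstacle here: the real content — translating the balanced condition into the classicality of $\gamma(\cM(\cU)^{\vee}_b[h=k])$ via the $X_-$-action and highest weight theory — has been absorbed into Lemma \ref{lem: clp-ber}. The only points requiring care in the theorem itself are bookkeeping ones: confirming the allowability and admissibility hypotheses of \cite[Thm. 0.13]{Em2}, using the isomorphism (\ref{equ: cpl-IPG}) to pass between $I_{\overline{B}(\Q_p)}^{\GL_2(\Q_p)}$ and the full locally analytic induction, and tracking $\cH^p$-equivariance through the adjunction.
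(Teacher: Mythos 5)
Your proposal is correct and follows exactly the paper's route: the paper deduces the theorem from Emerton's adjunction formula \cite[Thm. 0.13]{Em2} combined with Lemma \ref{lem: clp-ber}, using the isomorphism (\ref{equ: cpl-IPG}) (valid because $\cM(\cU)$ is locally free over $\co(\kappa(\cU))$) to identify $I_{\overline{B}(\Q_p)}^{\GL_2(\Q_p)}$ with the full locally analytic induction. Your additional bookkeeping (allowability, very strong admissibility, $\cH^p$-equivariance of the adjunction) only makes explicit what the paper leaves implicit.
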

Let  $\cM(\cU)^{\vee}_b[h=k]^{\cl}  \subseteq \cM(\cU)^{\vee}_b[h=k]$ be the subspace of classical vectors. Recall we have $\cM(\cU)^{\vee}_b[h=k]\cong (\cM(\cU)/(h-k))^{\vee}$ on which the $\co(\cU)$-action  is determined by the action of $T(\Q_p)\times \cH^p$. By definition (see also (\ref{equ: clp-fiber})), one easily verifies that $\cM(\cU)^{\vee}_b[h=k]^{\cl} $ is stable under the action of $T(\Q_p)$ and $\cH^p$. Hence $\cM(\cU)^{\vee}_b[h=k]^{\cl}$ is stable under the $\co(\cU)$-action.
Put
\begin{equation}\label{Iz}
  \cI_z:=\{\gamma \in \co(\cU)\ |\ \gamma(\cM(\cU)^{\vee}_b[h=k]) \subseteq \cM(\cU)^{\vee}_b[h=k]^{\cl}\}.
\end{equation}
We see $\cI_z$ is either an ideal of $\co(\cU)$ of $\cI_z=\co(\cU)$. It is also clear that $h-k\in \cI_z$.
\begin{proposition}\label{prop: clp-ntr}Suppose $\chi_z$ is a product of an algebraic character and an unramified character, then
  $\cI_z=\co(\cU)$ if and only if $z$ does not admit companion points.
\end{proposition}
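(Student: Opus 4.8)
The plan is to deduce Proposition \ref{prop: clp-ntr} from Theorem \ref{thm: clp-rps} together with the interpretation of $\cI_z$ and the smoothness/geometry of $\cC$ near $z$. First I would unwind the definitions: $\cI_z = \co(\cU)$ means precisely that the identity operator $\gamma = \id$ sends $\cM(\cU)^{\vee}_b[h=k]$ into the classical subspace, i.e.\ \emph{every} vector in $\cM(\cU)^{\vee}_b[h=k]$ is classical. By Theorem \ref{thm: clp-rps} (applied with $\gamma = \id$), this is equivalent to the existence of the adjunction morphism $\big(\Ind_{\overline{B}(\Q_p)}^{\GL_2(\Q_p)} \cM(\cU)^{\vee}_b\otimes_E \delta_B^{-1}\big)^{\an}\to \widetilde{H}^1_{\et,c}(K^p,E)^{\an}_{\overline{\rho}}$ whose $J_B$ recovers (\ref{cplf: inja}). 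So the proposition reduces to: such a morphism exists iff $z$ admits no companion point.

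For the ``if'' direction (no companion point $\Rightarrow$ $\cI_z = \co(\cU)$): this is essentially the content of the discussion around (\ref{cplf: inja})--(\ref{equ: clp-seB}). When $z$ (hence every point of $\cU$) admits no companion point, one argues exactly as in \cite[Lem. 4.5.12]{Em1} — reproduced in the proof of Lemma \ref{lem: clp-ber} — that the composition (\ref{cplf: inja}) itself is balanced, so by Emerton's adjunction \cite[Thm. 0.13]{Em2} it extends to the desired $\GL_2(\Q_p)$-morphism. Hence $\gamma = \id$ satisfies condition (i) of Theorem \ref{thm: clp-rps}, so all of $\cM(\cU)^{\vee}_b[h=k]$ is classical and $\cI_z = \co(\cU)$.

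For the ``only if'' direction (companion point $\Rightarrow$ $\cI_z \neq \co(\cU)$): suppose for contradiction that $\cI_z = \co(\cU)$, so that every vector $v \in \cM(\cU)^{\vee}_b[h=k]$ is classical. By (\ref{equ: clp-fiber}), $\cM(\cU)^{\vee}_b[h=k]$ is the generalized $(\chi_z,\lambda_z)$-eigenspace for $T(\Q_p)\times \cH^p$ inside $J_B\big(\widetilde{H}^1_{\et,c}(K^p,E)_{\overline{\rho}}^{\an}\big)[\ft = d\chi_z]$; picking a genuine eigenvector $v$ in it (using that $\cH^p$ acts semisimply on the locally algebraic part, or passing to the socle) and applying the analysis at the end of the proof of Lemma \ref{lem: clp-ber}, classicality of $v$ forces $X_-^{k+1}\cdot v = 0$, i.e.\ $\text U(\ug) v \cong (\Sym^k E^2)^\vee$. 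Feeding this locally algebraic vector through Emerton's adjunction \cite[Thm. 0.13]{Em2} (in the classical/locally algebraic setting, as in \cite[\S 4.3]{Em1}) produces a nonzero $\GL_2(\Q_p)\times \cH^p$-equivariant map from a locally algebraic principal series into $\widetilde{H}^1_{\et,c}(K^p,E)_{\overline{\rho}}^{\lalg}$; but the reducibility structure of that locally algebraic principal series (it contains the twist realizing $\chi_z^c$) together with the classical companion-point criterion — exactly \cite[Prop. 4.5.5, Def. 4.5.4]{Em1}, invoked just before Proposition \ref{sloNC} — forces the companion character $\chi_z^c$ to contribute, i.e.\ $(\chi_z^c,\lambda_z)\in\cS$, which \emph{is} the statement that $z$ admits a companion point. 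Hence if $z$ admits a companion point, not every vector of $\cM(\cU)^{\vee}_b[h=k]$ can be classical, so $\cI_z$ is a proper ideal.

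The main obstacle I expect is the ``only if'' direction, specifically making precise the step that classicality of the full fiber $\cM(\cU)^{\vee}_b[h=k]$ is \emph{incompatible} with the existence of a companion point. The subtlety is that having one classical (locally algebraic) eigenvector does not by itself produce a companion point — what one needs is that under the hypothesis $\chi_z$ is locally algebraic times unramified, the companion point $z^c$ forces a \emph{non-classical} vector to appear in the generalized eigenspace $\cM(\cU)^{\vee}_b[h=k]$, via the non-split self-extension or the $\ug$-action relating $v$ and its companion. Concretely I would argue: by Theorem \ref{ThmBE} the companion point exists iff $\rho_{z,p}$ splits, and in that case the $(\varphi,\Gamma)$-module / Galois-side computation (or directly the argument of \cite{BE}) shows $\dim_E J_B\big(\widetilde{H}^1_{\et,c}(K^p,E)_{\overline{\rho}}^{\an}\big)[\ft = d\chi_z]\{T(\Q_p)=\chi_z,\cH^p=\lambda_z\} > \dim_E J_B\big(\widetilde{H}^1_{\et,c}(K^p,E)_{\overline{\rho}}^{\lalg}\big)[T(\Q_p)=\chi_z,\cH^p=\lambda_z]$, i.e.\ the fiber is strictly larger than its classical subspace, so $\id \notin \cI_z$. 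Conversely absence of a companion point gives equality of these dimensions (the very-classical situation up to the generalized-eigenspace subtlety), recovering the ``if'' direction and closing the equivalence.
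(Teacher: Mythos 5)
Your ``if'' direction (no companion point $\Rightarrow$ $\cI_z=\co(\cU)$) is fine and is exactly the paper's argument: the reasoning of Lemma \ref{lem: clp-ber} shows that when no point of $\cU$ admits a companion point, $X_-^{k+1}$ kills the image of the fiber, so every vector of $\cM(\cU)^{\vee}_b[h=k]$ is classical.

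The ``only if'' direction has a genuine gap, and in fact you have located it yourself in your last paragraph without closing it. First, the middle paragraph is a non sequitur: you argue that classicality of an eigenvector produces, via the locally algebraic principal series, a contribution of $\chi_z^c$, ``which is the statement that $z$ admits a companion point'' --- that derives the existence of a companion point from classicality, which is the wrong implication, and the sentence ``Hence if $z$ admits a companion point, not every vector can be classical'' does not follow from it. Second, your concrete fallback --- that the companion point forces $\dim \cM(\cU)^{\vee}_b[h=k] > \dim \cM(\cU)^{\vee}_b[h=k]^{\cl}$ by ``the $(\varphi,\Gamma)$-module computation or the argument of \cite{BE}'' --- is precisely the hard point and is not established by those references as stated. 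The companion eigenvector lives at the weight $d\chi_z^c\neq d\chi_z$, so it does not lie in $\cM(\cU)^{\vee}_b[h=k]=J_B(\cdots)[\ft=d\chi_z]\{T(\Q_p)=\chi_z,\cH^p=\lambda_z\}$; what one actually needs is that the weight map $\kappa$ is \emph{ramified} at $z$, so that the generalized eigenspace $(\co(\cU)/(h-k))^{\vee}$ strictly contains the classical line. The paper gets this by the contrapositive: if every vector of the fiber is classical then (for $\chi_{z,1}\chi_{z,2}^{-1}\neq \unr(p^{-1})x^k$) Chenevier's infinite-fern method shows $\cC$ is \'etale over $\cW$ at $z$, contradicting Bergdall's theorem \cite[Thm.~1.1]{Bergd0} that critical points are ramified over weight space; the remaining case $\chi_{z,1}\chi_{z,2}^{-1}=\unr(p^{-1})x^k$ is handled separately by showing $z$ is then of non-critical slope (using (\ref{center})), hence non-critical by Proposition \ref{sloNC}. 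Neither of these inputs (Chenevier's \'etaleness argument, Bergdall's ramification theorem, or the exceptional-character case where the hypothesis on $\chi_z$ is genuinely used) appears in your proposal, so the equivalence is not proved.
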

\begin{proof}
  If $z$ does not have companion points, by the same argument as in the proof of Lemma \ref{lem: clp-ber}, we see any vector in $\cM(\cU)^{\vee}_b[h=k]$ is classical, hence $\cI_z=\co(\cU)$.

  Suppose now $\cI_z=\co(\cU)$. By Theorem \ref{thm: clp-rps},  any vector in $\cM(\cU)^{\vee}_b[h=k]$ is classical. By (\ref{equ: clp-fiber}),  any vector in $J_B\big(\widetilde{H}^1_{\et,c}(K^p,E)_{\overline{\rho}}^{\an}\big)[h=k]\{T(\Q_p)=\chi_z, \cH^p=\lambda_z\}$ is classical (in particular, the point $z$ is classical). Assume first $\chi_z=\chi_{z,1} \otimes \chi_{z,2}$ satisfies $\chi_{z,1} \chi_{z,2}^{-1} \neq \unr(p^{-1}) x^k$. In this case, by similar (and easier) argument as in the proof of \cite[Thm. 7]{Ding3} (using Chenevier's method \cite[\S~4.4]{Che11}), we can deduce that $\cC$ is \'etale over $\cW$ at the point $z$. However, by \cite[Thm. 1.1]{Bergd0}, if $z$ is critical, then $\cC$ will be ramified over $\cW$ at $z$, that leads to a contradiction.
  Now assume $\chi_{z,1} \chi_{z,2}^{-1} = \unr(p^{-1}) x^k$. Together with (\ref{center}), we easily deduce $\val_p(\chi_{z,1}(p))=(k-1)/2$. Since $z\in \cC$, the character $\chi_{z,1}$ is actually smooth.  By Definition \ref{class00} (2), the point $z$ is of non-critical slope, and hence non-critical by Proposition \ref{sloNC}. This concludes the proof.
\end{proof}

Under the $\pi_0$-action, the coherent sheaf $\cM$ naturally decomposes into $\cM^+\oplus \cM^-$. We deduce that  $\cM^{\pm}(\cU)$ are both locally free over $\co(\kappa(\cU))$. Moreover by considering their fibers at classical points, we can see both of them are non-zero. The precedent results also hold for $\{\cM^{\pm}(\cU), \widetilde{H}^1_{\et,c}(K^p,E)_{\overline{\rho}}^{\an,\pm}\}$. In particular, we can define ideals $\cI_z^{\pm}$ in a similar way, and we can prove that the same statement in Proposition \ref{prop: clp-ntr} holds with $\cI_z$ replaced by  $\cI_z^{\pm}$.
\section{Two-variable $p$-adic $L$-functions}
We use the results in \S~\ref{sec: clp-1.3} to construct two-variable $p$-adic $L$-functions in a neighborhood  of critical points in $\cC$.
\subsection{Constructions}\label{sec: 21}
Let $N>1$, $p\nmid N$, and let
\begin{equation*}
  K^p=\bigg\{\begin{pmatrix} a & b \\ c & d \end{pmatrix}\in \GL_2(\hat{\Z}^p) \ \bigg| \ c\equiv 0 \pmod{N},\  d\equiv 1 \pmod N\bigg\}.
\end{equation*}
In this section, we let $\cH^p$ denote the $\co_E$-algebra generated by $T_{\ell}$, $S_{\ell}$ for $\ell \nmid N$, $\ell \neq p$, and the diamond operators $\langle a \rangle$ for $a\in (\Z/N\Z)^{*}$. Let $k\in \Z_{\geq 0}$, $f$ be a newform of weight $k+2$, level $\Gamma_1(N)$ over $E$, i.e. $f$ is an eigenform for $\cH^p$ and $T_p$, $S_p$, such that there does not exist an  eigenform (for $\cH^p$) of weight $k+2$, of level $\Gamma_1(N')$ with $N'$ is a proper divisor of $N$, having the same eigenvalue as $f$ for the operators in $\cH^p$. Let $\rho_f$ be the associated $\Gal_{\Q}$-representation over $E$ (enlarge $E$ if necessary), and we assume that the mod $\varpi_E$ reduction of $\rho_f$ is isomorphic to $\overline{\rho}$. Let $\cC$, $\cM$ be as in \S~\ref{sec: clp-1.2}.  Let $\epsilon: (Z/NZ)^{\times} \ra E^{\times}$ be the character with $\langle a\rangle f=\epsilon(a) f$.  Let $a_p, b_p\in E$ (enlarge $E$ if necessary) be the eigenvalues of $T_p$, $S_p$ of $f$ respectively. Let $\alpha$ be a root of $X^2-a_p X+pb_p=0$, and put
\begin{equation*}
  f_{\alpha}(z)=f(z)-\frac{pb_p}{\alpha} f(pz),\\
\end{equation*}
which is a modular form of weight $k+2$, of level $\Gamma_1(N)\cap \Gamma_0(p)$, and is an  eigenform of the same eigenvalues of $f$ for $\cH^p$,  an eigenform for $U_p$ of eigenvalues $\alpha$. The form $f_{\alpha}$ is referred to as a \emph{refinement} (or a \emph{$p$-stabilization}) of $f$. By \cite[Prop. 4.4.2]{Em1}, $f_{\alpha}$ corresponds to  a classical point $z_{\alpha}=(\chi_{z_{\alpha}},\lambda_f)\in \cC\subset \cS$ where $\lambda_f$ denotes the system of eigenvalues of $f$ for operators in $\cH^p$, and where
\begin{equation*}
  \chi_{z_{\alpha}}=\unr(\alpha/p)\otimes x^{-k} \unr(pb_p/\alpha).\\
\end{equation*}
In this section, we construct two-variable $p$-adic $L$-functions in a neighborhood of $z_{\alpha}$ (especially in the case where $z_{\alpha}$ is critical), via Emerton's method \cite{Em05}\cite{Em1}.
\begin{proposition}\label{locfree}
  $\cM^{\pm}$ are locally free of rank $1$ in a neighborhood of $z_{\alpha}$ in $\cC$.
\end{proposition}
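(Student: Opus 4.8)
The plan is to deduce local freeness of $\cM^{\pm}$ from the Cohen--Macaulayness of $\cM$ together with the regularity of $\cC$ at $z_\alpha$, and then to pin the rank down to $1$ by specialising at a nearby very classical point.

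First I would record that the $\pi_0$-decomposition $\cM=\cM^+\oplus\cM^-$ is a decomposition of coherent $\co(\cC)$-modules, since the $\pi_0$-action commutes with those of $T(\Q_p)$ and $\cH^p$; hence each $\cM^{\pm}$ is a direct summand of $\cM$. Next, the eigencurve $\cC$ is regular of dimension $1$ at $z_\alpha$ --- for $z_\alpha$ of non-critical slope this is classical, and in the remaining cases, in particular when $z_\alpha$ is critical, it is Bella\"iche's smoothness theorem \cite{Bel}. Since $\cM$ is Cohen--Macaulay over $\cC$ (Theorem \ref{thm: clp-eac} (1), which also holds for $(\cC,\cM)$), and over a discrete valuation ring a finitely generated Cohen--Macaulay module is torsion-free, hence free, the sheaf $\cM$ is locally free in a neighbourhood of $z_\alpha$; therefore so is its direct summand $\cM^{\pm}$. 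Shrinking, I fix a connected affinoid neighbourhood $\cU$ of $z_\alpha$ in $\cC$ on which $\cM^{\pm}$ is locally free, of some constant rank $r^{\pm}$.

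It then remains to show $r^{\pm}=1$. By the fibre formula (\ref{fib0}) (for $\cC$), for any $z\in\cU$ one has $r^{\pm}=\dim_{k_z} z^*\cM^{\pm}=\dim J_B(\widetilde{H}^1_{\et,c}(K^p,E)_{\overline{\rho}}^{\an})^{\pm}[T(\Q_p)=\chi_z,\cH^p=\lambda_z]$. Taking $z=z_\alpha$ and using that $f_\alpha$ contributes nonzero classical vectors in each $\pi_0$-eigenspace --- the $f$-isotypic part of the classical compactly supported cohomology of the modular curve is one-dimensional on each side of the $T_\infty$-decomposition, by Eichler--Shimura and period theory --- shows $r^{\pm}\geq 1$. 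For the upper bound I would instead take $z=z'$ a point of non-critical slope lying in $\cU$ (such points accumulate at $z_\alpha$ by Theorem \ref{thm: clp-eac} (2), the character $\chi_{z_\alpha}$ being locally algebraic), which is very classical by Proposition \ref{prop: clp-cla}. Very classicality lets one replace $\an$ by $\lalg$ in the above formula at $z'$; then (\ref{localg}) together with the factorisation of the Jacquet functor on locally algebraic vectors identifies $\dim J_B(\widetilde{H}^1_{\et,c}(K^p,E)_{\overline{\rho}}^{\lalg})^{\pm}[T(\Q_p)=\chi_{z'},\cH^p=\lambda_{z'}]$ with the multiplicity with which the refinement $\chi_{z'}$ occurs in $J_B^{\sm}(\pi_p)$, where $\pi_p$ is the local representation at $p$ appearing in the $\pi_0$-eigenspace of the $\cH^p=\lambda_{z'}$ part of the classical cohomology --- a single copy, by strong multiplicity one for $\GL_2/\Q$ and absolute irreducibility of $\overline{\rho}$. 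That multiplicity is $1$ whether $\pi_p$ is an irreducible principal series or a twist of the Steinberg representation, so $r^{\pm}=1$ on $\cU$, as desired.

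The only non-formal ingredient, and the crux of the argument, is the regularity of $\cC$ at a critical $z_\alpha$, which is precisely Bella\"iche's theorem; everything else is Cohen--Macaulay bookkeeping, the fibre formulas (\ref{fib0}) and (\ref{localg}), and classical multiplicity one for modular forms.
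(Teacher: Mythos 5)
Your argument in the main case is essentially the paper's: Bella\"iche's smoothness of $\cC$ at $z_{\alpha}$ plus Cohen--Macaulayness of $\cM$ gives local freeness (the paper quotes \cite[Cor. 17.3.5 (i)]{EGAiv1} where you use the DVR argument; these amount to the same thing, and your observation that $\cM^{\pm}$ inherits local freeness as a direct summand is exactly what the paper does), and the rank is pinned down to $1$ by computing, via (\ref{fib0}), Proposition \ref{prop: clp-cla} and multiplicity one, the one-dimensional fibres at the non-critical-slope very classical points accumulating at $z_{\alpha}$. Your separate lower bound $r^{\pm}\geq 1$ from the fibre at $z_{\alpha}$ itself is harmless but redundant.

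The genuine gap is your treatment of the case $\alpha=pb_p/\alpha$. You assert that regularity of $\cC$ at a non-critical-slope classical point is ``classical''; it is not. Bella\"iche's smoothness theorem (and the older \'etaleness results it refines) requires the refinement to be regular, i.e. the two roots of $X^2-a_pX+pb_p$ to be distinct; at an irregular point smoothness of the eigencurve is open (such points are conjectured not to exist, but that is not a theorem), so your blanket appeal to regularity fails precisely there. This is why the paper splits into two cases: when $\alpha\neq pb_p/\alpha$ it runs your argument, and when $\alpha=pb_p/\alpha$ it notes that $\val_p(\alpha)=(k+1)/2<k+1$, so $z_{\alpha}$ is of non-critical slope, and invokes \cite[Prop. 4.4.20]{Em1}, which yields local freeness of rank one at such points directly, with no smoothness input. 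You should substitute that citation for the appeal to ``classical'' regularity in the degenerate case; note that the critical case you actually care about always has $\alpha\neq pb_p/\alpha$, since criticality forces $\val_p(\alpha)=k+1\neq (k+1)/2$.
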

\begin{proof}
Suppose first $\alpha\neq pb_p/\alpha$ (i.e. $X^2-a_p X+pb_p=0$ has two distinct roots). By \cite[\S~2.3]{Bel}, the eigencurve $\cC$ is smooth at the classical point $z_{\alpha}$. Since $\cM$ is Cohen-Macauly (see Theorem \ref{thm: clp-eac} (1) and the discussion below (\ref{curve0})), by \cite[Cor. 17.3.5 (i)]{EGAiv1}, $\cM$ is locally free around $z_{\alpha}$. Hence $\cM^{\pm}$ are also locally free around $z_{\alpha}$. For any classical point $z'=(\chi_{z'},\lambda_{z'})$ of non-critical slope in $\cC$, we have
\begin{multline}\label{multi00}
  \dim_{k_{z'}} (z')^* \cM^{\pm}= \dim_{k_{z'}} J_B\big(\widetilde{H}^1_{\et,c}(K^p,E)_{\overline{\rho}}^{\an,\pm}\big)[T(\Q_p)=\chi_{z'},\cH^p=\lambda_{z'}]\\
  =\dim_{k_{z'}} J_B\big(\widetilde{H}^1_{\et,c}(K^p,E)_{\overline{\rho}}^{\lalg,\pm}\big)[T(\Q_p)=\chi_{z'},\cH^p=\lambda_{z'}]=1.
\end{multline}
where the first equality follows from (\ref{fib0}), the second follows from Proposition \ref{prop: clp-cla}, and the last equality follows from the multiplicity one result (see \cite[Prop. 4.4.18]{Em1}).
Since such points accumulate at $z_{\alpha}$ (see Theorem \ref{thm: clp-eac} (2) and the discussion below (\ref{curve0})), we deduce  that $\cM^{\pm}$ are locally free of rank $1$ in a neighborhood of $z_{\alpha}$ in $\cC$. If $\alpha=pb_p/\alpha$, as in the proof of Proposition \ref{prop: clp-ntr}, we know $z_{\alpha}$ is of non-critical slope. The proposition in this case then follows from \cite[Prop. 4.4.20]{Em1} (and the proof).
\end{proof}Let $\cU$ be a neighborhood of $z_{\alpha}$ in $\cC$ as in \S~\ref{sec: clp-1.3}.  If $\alpha\neq pb_p/\alpha$, we shrink $\cU$ such that the maximal ideal $\fm_{z_{\alpha}}$ associated to $z_{\alpha}$ is generated by one element $r_{z_{\alpha}}\in \co(\cU)$ (using the fact $\cU$ is one-dimensional and smooth at the point $z_{\alpha}$). We let $e$ be the ramification degree of $\cC$ over $\cW$ at $z_{\alpha}$ in this case. If $\alpha=pb_p/\alpha$, we put $r_{z_{\alpha}}=1$ and $e=1$.
\begin{theorem}\label{adj003}
  The composition (that is $\cH^p \times T(\Q_p)$-equivariant)  \begin{equation}\label{equ: clp-cEp}
  \cM^{\pm}(\cU)^{\vee}_b \xlongrightarrow{r_{z_{\alpha}}^{e-1}} \cM^{\pm}(\cU)^{\vee}_b \lra J_B\big(\widetilde{H}^1_{\et,c}(K^p,E)_{\overline{\rho}}^{\an}\big)
\end{equation}
induces  an $\cH^p$-equivariant morphism of locally analytic representations of $\GL_2(\Q_p)$
\begin{equation}\label{equ: clp-ppA}
  \iota^{\pm}: \big(\Ind_{\overline{B}(\Q_p)}^{\GL_2(\Q_p)} \cM^{\pm}(\cU)^{\vee}_b\otimes_E \delta_B^{-1}\big)^{\an} \lra \widetilde{H}^1_{\et,c}(K^p,E)_{\overline{\rho}}^{\an,\pm}.
\end{equation}
\end{theorem}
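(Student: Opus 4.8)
The plan is to deduce the theorem from Theorem \ref{thm: clp-rps} (in the $\pm$-version indicated at the end of \S~\ref{sec: clp-1.3}), applied with $\gamma\in\End_{T(\Q_p)}\big(\cM^{\pm}(\cU)^{\vee}_b\big)$ the multiplication by $r_{z_{\alpha}}^{e-1}\in\co(\cU)$. Since $\co(\cU)$ is commutative and acts $\co(\cU)$-linearly, this $\gamma$ commutes with $\cH^p$, and (\ref{equ: clp-cEp}) is then precisely the map (\ref{equ: clp-wUe}) for this $\gamma$. Thus the whole problem reduces to verifying condition (ii) of Theorem \ref{thm: clp-rps}, i.e. that $r_{z_{\alpha}}^{e-1}\in\cI_{z_{\alpha}}^{\pm}$ --- equivalently that every vector in $r_{z_{\alpha}}^{e-1}\big(\cM^{\pm}(\cU)^{\vee}_b[h=k]\big)$ is classical. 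When $\alpha=pb_p/\alpha$ this is immediate: as in the proof of Proposition \ref{prop: clp-ntr}, $z_{\alpha}$ is then of non-critical slope, hence not critical (Proposition \ref{sloNC}) and admits no companion point, so $\cI_{z_{\alpha}}^{\pm}=\co(\cU)$ by the $\pm$-analogue of Proposition \ref{prop: clp-ntr}; and here $r_{z_{\alpha}}=1$, $e=1$, so (\ref{equ: clp-cEp}) is just Emerton's map (\ref{equ: clp-seB}). I will henceforth assume $\alpha\neq pb_p/\alpha$.

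The next step is to describe the $\co(\cU)$-module $\cM^{\pm}(\cU)^{\vee}_b[h=k]$. By \cite[\S~2.3]{Bel}, $\cC$ is smooth of dimension one at $z_{\alpha}$; by Proposition \ref{locfree} we may shrink $\cU$ so that $\cM^{\pm}(\cU)$ is free of rank one over $\co(\cU)$, whence $\cM^{\pm}(\cU)^{\vee}_b[h=k]\cong\big(\co(\cU)/(h-k)\co(\cU)\big)^{\vee}$. Now $h-k$ cuts out $\kappa(z_{\alpha})$ as a reduced point of $\cV$ (the weight map on $\cW$ being \'etale, together with our assumption that $\kappa(z_{\alpha})$ is the only point of $\cV$ of weight $-k$), so in the discrete valuation ring $\co(\cU)_{z_{\alpha}}$ one has $(h-k)=(r_{z_{\alpha}}^{e})$, and therefore $\co(\cU)/(h-k)\co(\cU)\cong E[t]/(t^{e})$ with $t$ the image of $r_{z_{\alpha}}$. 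Hence $\cM^{\pm}(\cU)^{\vee}_b[h=k]$ is free of rank one over $E[t]/(t^{e})$, and multiplication by $r_{z_{\alpha}}^{e-1}=t^{e-1}$ sends it onto its socle, the one-dimensional subspace $\cM^{\pm}(\cU)^{\vee}_b[h=k][\fm_{z_{\alpha}}]$. By (\ref{fib0}) applied to $\cM^{\pm}$, this socle equals $(z_{\alpha}^{*}\cM^{\pm})^{\vee}\cong J_B\big(\widetilde{H}^1_{\et,c}(K^p,E)_{\overline{\rho}}^{\an,\pm}\big)[T(\Q_p)=\chi_{z_{\alpha}},\cH^p=\lambda_f]$, the honest $(\chi_{z_{\alpha}},\lambda_f)$-eigenspace on the $\pm$-part.

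It then remains to check that this honest eigenspace is classical. It contains $J_B\big(\widetilde{H}^1_{\et,c}(K^p,E)_{\overline{\rho}}^{\lalg,\pm}\big)[T(\Q_p)=\chi_{z_{\alpha}},\cH^p=\lambda_f]$, which is non-zero --- in fact one-dimensional --- for \emph{each} sign: $z_{\alpha}$ is the point attached to the classical refinement $f_{\alpha}$ of the newform $f$, and since $\alpha\neq pb_p/\alpha$ the system of Hecke eigenvalues of $f_{\alpha}$ occurs with multiplicity one in each $\pm$-part of the relevant classical cohomology (the classical Eichler--Shimura/modular-symbol multiplicity-one statement; compare \cite[Prop.~4.4.18]{Em1} and (\ref{multi00})). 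As both spaces are one-dimensional they coincide, so the socle lies in $\cM^{\pm}(\cU)^{\vee}_b[h=k]^{\cl}$, i.e. $r_{z_{\alpha}}^{e-1}\in\cI_{z_{\alpha}}^{\pm}$. Theorem \ref{thm: clp-rps} then furnishes the $\cH^p$-equivariant $\GL_2(\Q_p)$-map $\iota^{\pm}$ of (\ref{equ: clp-ppA}), and unwinding the adjunction as in (\ref{Adj002}) shows its Jacquet module is (\ref{equ: clp-cEp}).

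The main obstacle is the middle paragraph: identifying $\cM^{\pm}(\cU)^{\vee}_b[h=k]$ with a free rank-one module over $E[t]/(t^{e})$ and locating the image of $r_{z_{\alpha}}^{e-1}$ as exactly the fibre $(z_{\alpha}^{*}\cM^{\pm})^{\vee}$ --- this is where the smoothness of $\cC$ at critical points (\cite{Bel}) and the interpretation of $e$ as the ramification index of $\kappa$ at $z_{\alpha}$ (recall $e\geq 2$ precisely when $z_{\alpha}$ is critical, by \cite[Thm.~1.1]{Bergd0}) are used --- and where one must be careful that the resulting one-dimensional fibre is genuinely locally algebraic on each of the two $\pm$-components.
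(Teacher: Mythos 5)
Your proposal is correct and follows essentially the same route as the paper: reduce to condition (ii) of Theorem \ref{thm: clp-rps} for $\gamma=r_{z_{\alpha}}^{e-1}$, dispose of the case $\alpha=pb_p/\alpha$ via non-critical slope, and in the remaining case use smoothness at $z_{\alpha}$, local freeness of rank one of $\cM^{\pm}$, $(h-k)=(r_{z_{\alpha}}^{e})$, and the multiplicity-one statement to identify $r_{z_{\alpha}}^{e-1}\big(\cM^{\pm}(\cU)^{\vee}_b[h=k]\big)$ with the one-dimensional fibre $(z_{\alpha}^{*}\cM^{\pm})^{\vee}$ and to see that it is classical. The paper packages this as the equality (\ref{chaofcl}) and the computation $\cI_z^{\pm}=r_{z_{\alpha}}^{e-1}\co(\cU)$, whereas you phrase it via the socle of a free rank-one $E[t]/(t^{e})$-module, but the content is the same.
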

\begin{proof}
  If $\alpha=pb_p/\alpha$, then $z_{\alpha}$ is of non-critical slope (see the proof of Proposition \ref{prop: clp-ntr}), and hence non-critical by Proposition \ref{sloNC}. By Proposition \ref{prop: clp-ntr} (and the discussion that follows), we see $\cI_z^{\pm}=\co(\cU)$. The theorem then follows from Theorem \ref{thm: clp-rps}.

  We suppose $\alpha\neq pb_p/\alpha$. We prove first
    \begin{equation}\label{chaofcl}
\cM^{\pm}(\cU)^{\vee}_b[r_{z_{\alpha}}]=\cM^{\pm}(\cU)_b^{\vee}[h=k]^{\cl}.
  \end{equation}
We have
\begin{multline}\label{ss00}\cM^{\pm}(\cU)_b^{\vee}[h=k]^{\cl}=J_B\big(\widetilde{H}^1_{\et,c}(K^p,E)_{\overline{\rho}}^{\lalg,\pm}\big)[\ft=d\chi_z]\{T(\Q_p)=\chi_z,\cH^p=\lambda_z\} \\=J_B\big(\widetilde{H}^1_{\et,c}(K^p,E)_{\overline{\rho}}^{\lalg,\pm}\big)[T(\Q_p)=\chi_z,\cH^p=\lambda_z]
\end{multline}where the first equality follows from (\ref{equ: clp-fiber}), and the second from the assumption $\alpha\neq pb_p/\alpha$ and the semi-simplicity of the $\cH^p$-action on $\widetilde{H}^1_{\et,c}(K^p,E)_{\overline{\rho}}^{\lalg}$ (using the classical fact that the $\cH^p$-action on the right hand side of (\ref{localg}) is semi-simple).
By (the proof of) \cite[Prop. 4.4.18]{Em1}, the $E$-vector space in (\ref{ss00}) is one dimensional. It is clear that we have an injection
\begin{multline}\label{inja}J_B\big(\widetilde{H}^1_{\et,c}(K^p,E)_{\overline{\rho}}^{\lalg,\pm}\big)[T(\Q_p)=\chi_z,\cH^p=\lambda_z] \\ \hooklongrightarrow J_B\big(\widetilde{H}^1_{\et,c}(K^p,E)_{\overline{\rho}}^{\an,\pm}\big)[T(\Q_p)=\chi_z,\cH^p=\lambda_z]\cong (z_{\alpha}^* \cM^{\pm})^{\vee}=\cM^{\pm}(\cU)^{\vee}_b[r_{z_{\alpha}}].\end{multline}
By Proposition \ref{locfree}, $\dim_E \cM^{\pm}(\cU)^{\vee}_b[r_{z_{\alpha}}]=1$. We see (\ref{inja}) is actually bijective. Combined with (\ref{ss00}), (\ref{chaofcl}) follows.

We have $(h-k)\co(\cU)=r_{z_{\alpha}}^e \co(\cU)$. By Proposition \ref{locfree}, we have $\cM^{\pm}(\cU)_b^{\vee}[h=k]\cong (\co(\cU)/(h-k))^{\vee}$ and by (\ref{chaofcl}) $\cM^{\pm}(\cU)_b^{\vee}[h=k]^{\cl}\cong (\co(\cU)/r_{z_{\alpha}})^{\vee}$. We deduce (see (\ref{Iz}) and the discussion at the end of \S~\ref{sec: clp-1.3})
\begin{equation}\label{Iz2}\cI_z=\cI_z^{\pm}=(\frac{h-k}{r_{z_\alpha}}) \co(\cU)=r_{z_{\alpha}}^{e-1} \co(\cU).
\end{equation}
The theorem follows then from Theorem \ref{thm: clp-rps}.
\end{proof}

We have $\cM^{\pm}(\cU)^{\vee}_b[r_{z_{\alpha}}]\cong \chi_{z_{\alpha}}$ as $T(\Q_p)$-representation, thus (\ref{equ: clp-ppA}) induces a morphism
\begin{equation}\label{equ: clp-adj}
  \big(\Ind_{\overline{B}(\Q_p)}^{\GL_2(\Q_p)} \chi_{z_{\alpha}} \delta_B^{-1}\big)^{\an} \lra \widetilde{H}^1_{\et,c}(K^p,E)_{\overline{\rho}}^{\an,\pm}[\cH^p=\lambda_f].
\end{equation}
The locally analytic representation $ (\Ind_{\overline{B}(\Q_p)}^{\GL_2(\Q_p)}\chi_{z_{\alpha}}\delta_B^{-1})^{\an}$ sits in a non-splitting  exact sequence (e.g. see \cite[Thm. 4.1]{Br})
\begin{multline}\label{equ: clp-exact}
 0 \lra (\Ind_{\overline{B}(\Q_p)}^{\GL_2(\Q_p)} \chi_{z_{\alpha}}^{\sm}\delta_B^{-1})^{\infty}\otimes_E (\Sym^k E^2)^{\vee} \\ \lra  (\Ind_{\overline{B}(\Q_p)}^{\GL_2(\Q_p)}\chi_{z_{\alpha}}\delta_B^{-1})^{\an} \lra  (\Ind_{\overline{B}(\Q_p)}^{\GL_2(\Q_p)} \chi_{z_{\alpha}}^c\delta_B^{-1})^{\an}\ra 0,
\end{multline}
where $\chi_{z_{\alpha}}^{\sm}:=\unr(\alpha/p)\otimes \unr(pb_p/\alpha)$ is the ``smooth" part of $\chi_{z_{\alpha}}$, $(\Ind -)^{\infty}$ denotes the smooth induction, and recall  $\chi_{z_{\alpha}}^c=\chi_{z_{\alpha}} (x^{1-k} \otimes x^{k-1})$.
\begin{proposition}\label{prop: clp-rest}
 The map (\ref{equ: clp-adj}) is non-zero, and its restriction on the locally algebraic vectors is zero if and only if $z_{\alpha}$ is critical.
\end{proposition}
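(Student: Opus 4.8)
The plan is to analyze the two pieces of the statement separately, using the exact sequence (\ref{equ: clp-exact}) as the organizing tool. For non-vanishing of (\ref{equ: clp-adj}): by construction the map (\ref{equ: clp-adj}) is obtained from $\iota^{\pm}$ of Theorem \ref{adj003} by restricting to the $\cH^p=\lambda_f$-eigenspace and using $\cM^{\pm}(\cU)^{\vee}_b[r_{z_{\alpha}}]\cong \chi_{z_{\alpha}}$; and $\iota^{\pm}$ is adjoint (via Emerton's adjunction formula, Theorem \ref{thm: clp-rps}) to the balanced map (\ref{equ: clp-cEp}), whose composition with $\cM^{\pm}(\cU)^{\vee}_b \hookrightarrow J_B(\cdots)$ recovers $r_{z_{\alpha}}^{e-1}$ followed by (\ref{cplf: inja}). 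So I would argue that if (\ref{equ: clp-adj}) were zero, then applying $J_B$ and restricting would force the composite $\cM^{\pm}(\cU)^{\vee}_b[r_{z_{\alpha}}] \to J_B(\widetilde{H}^1_{\et,c}(K^p,E)_{\overline{\rho}}^{\an})$ induced by $r_{z_{\alpha}}^{e-1}\circ(\ref{cplf: inja})$ to vanish on the fiber; but by (\ref{Iz2}) and (\ref{chaofcl}), $r_{z_{\alpha}}^{e-1}$ sends $\cM^{\pm}(\cU)^{\vee}_b$ onto $\cM^{\pm}(\cU)^{\vee}_b[h=k]^{\cl}=\cM^{\pm}(\cU)^{\vee}_b[r_{z_{\alpha}}]$, which is the one-dimensional space (\ref{inja}) mapping injectively into $J_B(\widetilde{H}^1_{\et,c}(K^p,E)_{\overline{\rho}}^{\an,\pm})$ via (\ref{cplf: inja}); in particular it is nonzero there. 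Hence (\ref{equ: clp-adj}) is nonzero.

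For the second assertion, I would restrict (\ref{equ: clp-adj}) to locally algebraic vectors. The locally algebraic vectors of $(\Ind_{\overline{B}(\Q_p)}^{\GL_2(\Q_p)}\chi_{z_{\alpha}}\delta_B^{-1})^{\an}$ are exactly the subobject $(\Ind_{\overline{B}(\Q_p)}^{\GL_2(\Q_p)} \chi_{z_{\alpha}}^{\sm}\delta_B^{-1})^{\infty}\otimes_E (\Sym^k E^2)^{\vee}$ appearing in (\ref{equ: clp-exact}) (this is a standard fact: the quotient $(\Ind \chi_{z_{\alpha}}^c\delta_B^{-1})^{\an}$ has no nonzero locally algebraic vectors since $\chi_{z_{\alpha}}^c$ is of antidominant weight). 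So the restriction to locally algebraic vectors lands in $\widetilde{H}^1_{\et,c}(K^p,E)_{\overline{\rho}}^{\lalg,\pm}[\cH^p=\lambda_f]$, and the question is whether it is nonzero. By Frobenius reciprocity / the adjunction at the locally algebraic level, a nonzero such map is equivalent to the existence of a nonzero classical vector in the relevant Jacquet module eigenspace, i.e. to $J_B(\widetilde{H}^1_{\et,c}(K^p,E)_{\overline{\rho}}^{\lalg,\pm})[T(\Q_p)=\chi_{z_{\alpha}},\cH^p=\lambda_f]\neq 0$. I would then trace through: the adjoint map $\cM^{\pm}(\cU)^{\vee}_b[r_{z_{\alpha}}]\to J_B(\widetilde{H}^1_{\et,c}(K^p,E)_{\overline{\rho}}^{\an,\pm})$ has image in the locally algebraic part precisely when the image vector is classical, which by (\ref{chaofcl}) and the definition of $\cI_z$ (see (\ref{Iz})) happens iff the factor $r_{z_{\alpha}}^{e-1}$ is already "enough", i.e. iff $e=1$, i.e. iff $\cI_z=\co(\cU)$. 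By Proposition \ref{prop: clp-ntr} (applied to $\cI_z^{\pm}$, as noted at the end of \S\ref{sec: clp-1.3}), $\cI_z^{\pm}=\co(\cU)$ iff $z_{\alpha}$ does not admit companion points, i.e. (since $z_{\alpha}$ is classical) iff $z_{\alpha}$ is \emph{not} critical. Therefore the restriction to locally algebraic vectors is zero iff $z_{\alpha}$ is critical.

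The main obstacle I anticipate is making precise the bookkeeping in the last paragraph: one must carefully identify "the image of the adjoint map on locally algebraic vectors is nonzero" with "the vector $\iota^{\pm}$ produces in $J_B$ from the bottom of (\ref{equ: clp-exact}) is nonzero," and then with the statement that the classical fiber $J_B(\widetilde{H}^1_{\et,c}(K^p,E)_{\overline{\rho}}^{\lalg,\pm})[T(\Q_p)=\chi_{z_{\alpha}},\cH^p=\lambda_f]$ survives into the analytic fiber (\ref{inja}) under the factor $r_{z_{\alpha}}^{e-1}$ — which survives iff $e=1$. Concretely: by Theorem \ref{thm: clp-rps}(ii)$\Leftrightarrow$(i) the adjoint exists with $\gamma=r_{z_{\alpha}}^{e-1}$ because $r_{z_{\alpha}}^{e-1}$ maps into the classical subspace by (\ref{Iz2}); the restriction of $\iota^{\pm}$ to locally algebraic vectors corresponds, under $J_B$, to the restriction of $r_{z_{\alpha}}^{e-1}$ to $\cM^{\pm}(\cU)^{\vee}_b[r_{z_{\alpha}}]=\cM^{\pm}(\cU)^{\vee}_b[h=k]^{\cl}$, which is zero exactly when $e>1$ (as $r_{z_{\alpha}}$ annihilates this space) and an isomorphism when $e=1$. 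Since $z_{\alpha}$ critical $\iff$ $\cC$ ramified over $\cW$ at $z_{\alpha}$ $\iff$ $e>1$ (Bellaïche \cite{Bel}, Bergdall \cite{Bergd0}, already used in the proof of Proposition \ref{prop: clp-ntr}), and $z_{\alpha}$ is always classical here, the claimed equivalence follows. Care is also needed because $\chi_{z_{\alpha}}$ is assumed to be (algebraic)$\times$(unramified), so Proposition \ref{prop: clp-ntr} applies directly; the degenerate case $\alpha=pb_p/\alpha$ forces $z_{\alpha}$ of non-critical slope, hence non-critical, hence $e=1$ and the restriction is nonzero, consistent with the statement.
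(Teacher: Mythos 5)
Your argument for the second assertion (vanishing of the locally algebraic restriction iff $z_{\alpha}$ is critical) is essentially the paper's: under $J_B$ the restriction of $\iota^{\pm}$ to $I(\chi_{z_{\alpha}}\delta_B^{-1})$ is identified with $r_{z_{\alpha}}^{e-1}$ restricted to $\cM^{\pm}(\cU)^{\vee}_b[r_{z_{\alpha}}]$, which vanishes iff $e>1$ iff $z_{\alpha}$ is critical (via Proposition \ref{prop: clp-ntr} and (\ref{Iz2})). That part is sound.

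Your argument for the first assertion (non-vanishing of (\ref{equ: clp-adj})) has a genuine gap, and in fact contradicts your own second part. You claim that if (\ref{equ: clp-adj}) were zero, the composite $\cM^{\pm}(\cU)^{\vee}_b[r_{z_{\alpha}}]\hookrightarrow \cM^{\pm}(\cU)^{\vee}_b\xrightarrow{r_{z_{\alpha}}^{e-1}}\cM^{\pm}(\cU)^{\vee}_b\to J_B(\cdots)$ would vanish, ``but it is nonzero there.'' It is \emph{not} nonzero in the critical case: $r_{z_{\alpha}}$ annihilates the $r_{z_{\alpha}}$-torsion subspace $\cM^{\pm}(\cU)^{\vee}_b[r_{z_{\alpha}}]$, so $r_{z_{\alpha}}^{e-1}$ kills it whenever $e>1$. (You have confused ``the image of $r_{z_{\alpha}}^{e-1}$ on $\cM^{\pm}(\cU)^{\vee}_b[h=k]\cong(\co(\cU)/r_{z_{\alpha}}^{e})^{\vee}$ equals the torsion line'' with ``the restriction of $r_{z_{\alpha}}^{e-1}$ \emph{to} that line is nonzero.'') Indeed the vanishing of this composite is exactly what makes the locally algebraic restriction zero in the critical case; it tells you nothing about (\ref{equ: clp-adj}) itself, because $J_B$ of $(\Ind_{\overline{B}(\Q_p)}^{\GL_2(\Q_p)}\chi_{z_{\alpha}}\delta_B^{-1})^{\an}$ sees $\chi_{z_{\alpha}}^{c}$ as well as $\chi_{z_{\alpha}}$, and in the critical case (\ref{equ: clp-adj}) factors through the quotient $(\Ind_{\overline{B}(\Q_p)}^{\GL_2(\Q_p)}\chi_{z_{\alpha}}^{c}\delta_B^{-1})^{\an}$ (cf. (\ref{compan})). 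The paper's actual argument is different and is needed here: assuming (\ref{equ: clp-adj})$=0$, one factors $\iota^{\pm}$ as $\iota''\circ r_{z_{\alpha}}$ on the full induced representation, applies $J_B$ to get (\ref{equ: clp-cEp})$=\iota'\circ r_{z_{\alpha}}$ with $\iota'$ balanced (hence $\iota'$ lands in the classical, i.e.\ $r_{z_{\alpha}}$-torsion, subspace on $[h=k]$), and then contradicts the identity $r_{z_{\alpha}}\circ\iota'=r_{z_{\alpha}}^{e-1}$ on $\cM^{\pm}(\cU)^{\vee}_b[h=k]\cong(\co(\cU)/r_{z_{\alpha}}^{e})^{\vee}$, on which $r_{z_{\alpha}}^{e-1}\neq 0$. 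You would need to supply this (or an equivalent) argument for the critical case; the non-critical case does follow from your second part.
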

\begin{proof}We first prove the second part of the proposition. Put \begin{equation*}I(\chi_{z_{\alpha}} \delta_B^{-1}):=(\Ind_{\overline{B}(\Q_p)}^{\GL_2(\Q_p)} \chi_{z_{\alpha}}^{\sm}\delta_B^{-1})^{\infty}\otimes_E (\Sym^k E^2)^{\vee}\end{equation*}
 which is in fact the locally algebraic subrepresentation of $\big(\Ind_{\overline{B}(\Q_p)}^{\GL_2(\Q_p)} \chi_{z_{\alpha}} \delta_B^{-1}\big)^{\an}$ (cf. (\ref{equ: clp-exact})). Applying the (left exact) Jacquet-Emerton functor to the composition
\begin{equation}
   I(\chi_{z_{\alpha}} \delta_B^{-1})\hooklongrightarrow \big(\Ind_{\overline{B}(\Q_p)}^{\GL_2(\Q_p)} \chi_{z_{\alpha}} \delta_B^{-1}\big)^{\an} \xlongrightarrow{\text{(\ref{equ: clp-adj})}} \widetilde{H}^1_{\et,c}(K^p,E)_{\overline{\rho}}^{\an,\pm}[\cH^p=\lambda_f],
\end{equation}
one gets an injection of locally analytic representations of $T(\Q_p)$ (see \cite[Lem. 0.3]{Em2}):
\begin{equation}\label{equ: clp-ggb}
  \chi_{z_{\alpha}}\hooklongrightarrow J_B\big(\widetilde{H}^1_{\et,c}(K^p,E)_{\overline{\rho}}^{\an,\pm}\big)[\cH^p=\lambda_f].
\end{equation}
We have actually a commutative diagram (see (\ref{Adj002}) for the bottom maps)
\begin{equation*}
  \begin{CD}
    \chi_{z_{\alpha}}@>>> J_B\big(\Ind_{\overline{B}(\Q_p)}^{\GL_2(\Q_p)} \chi_{z_{\alpha}} \delta_B^{-1}\big)^{\an} @>>> J_B\big(\widetilde{H}^1_{\et,c}(K^p,E)_{\overline{\rho}}^{\an,\pm}\big)[\cH^p=\lambda_f] \\@VVV @VVV @VVV\\
    \cM(\cU)^{\vee}_b @>>> J_B\big(\big(\Ind_{\overline{B}(\Q_p)}^{\GL_2(\Q_p)} \cM(\cU)^{\vee}_b\otimes_E \delta_B^{-1}\big)^{\an}\big) @>>> J_B\big(\widetilde{H}^1_{\et,c}(K^p,E)^{\an}_{\overline{\rho}}\big)
  \end{CD}
\end{equation*}
Using Theorem \ref{thm: clp-rps}, we see (\ref{equ: clp-ggb}) is equal to the composition
\begin{equation*}
  \chi_{z_{\alpha}}\cong \cM^{\pm}(\cU)^{\vee}_b[r_{z_{\alpha}}] \hooklongrightarrow \cM^{\pm}(\cU)^{\vee}_b \xlongrightarrow{\text{(\ref{equ: clp-cEp})}} J_B\big(\widetilde{H}^1_{\et,c}(K^p,E)_{\overline{\rho}}^{\an,\pm}\big).
\end{equation*}
It is  straightforward to see the above map is non-zero if and only if $e=1$, which is equivalent to that  $z_{\alpha}$ is non-critical by Proposition \ref{prop: clp-ntr}. The second part follows.

Now we prove (\ref{equ: clp-adj}) is non-zero. The non-critical case is an easy consequence of the second part proved above. Now assume $z_{\alpha}$ is critical (in particular $\alpha\neq pb_p/\alpha$). If (\ref{equ: clp-adj}) is zero, we see the morphism
\begin{equation*}
  (\Ind_{\overline{B}(\Q_p)}^{\GL_2(\Q_p)} \cM^{\pm}(\cU)^{\vee}_b \otimes_E \delta_B^{-1})^{\an} \lra \widetilde{H}^1_{\et,c}(K^p,E)_{\overline{\rho}}^{\an,\pm}
\end{equation*}
factors as \Big(noting the kernel of the first map of the following composition is $$ (\Ind_{\overline{B}(\Q_p)}^{\GL_2(\Q_p)} \cM^{\pm}(\cU)^{\vee}_b[r_{z_{\alpha}}] \otimes_E \delta_B^{-1})^{\an} \cong \big(\Ind_{\overline{B}(\Q_p)}^{\GL_2(\Q_p)} \chi_{z_{\alpha}} \delta_B^{-1}\big)^{\an}\Big):$$
\begin{equation*}
  (\Ind_{\overline{B}(\Q_p)}^{\GL_2(\Q_p)} \cM^{\pm}(\cU)^{\vee}_b \otimes_E \delta_B^{-1})^{\an} \xlongrightarrow{r_{z_{\alpha}}} (\Ind_{\overline{B}(\Q_p)}^{\GL_2(\Q_p)} \cM^{\pm}(\cU)^{\vee}_b \otimes_E \delta_B^{-1})^{\an} \lra \widetilde{H}^1_{\et,c}(K^p,E)_{\overline{\rho}}^{\an,\pm},
\end{equation*}
where the second morphism is also $\GL_2(\Q_p)\times \cH^p$-equivariant.
Applying the Jacquet-Emerton functor, we deduce (\ref{equ: clp-cEp}) can factor as
\begin{equation}\label{comp11}
  \cM^{\pm}(\cU)^{\vee}_b  \xlongrightarrow{r_{z_{\alpha}}} \cM^{\pm}(\cU)^{\vee}_b \xlongrightarrow{\iota'} J_B\big(\widetilde{H}^1_{\et,c}(K^p,E)_{\overline{\rho}}^{\an,\pm}\big),
\end{equation}
where $\iota'$ is $T(\Q_p)\times \cH^p$-equivariant.
By \cite[Thm. 0.13]{Em2}, the map $\iota'$ is balanced. Using the same argument as in the proof of Lemma \ref{lem: clp-ber}, we see that any vector in $\iota'(\cM^{\pm}(\cU)^{\vee}_b[h=k])$ is classical.  The composition
\begin{equation}\label{pol0}r_{z_{\alpha}} \circ \iota': \cM^{\pm}(\cU)^{\vee}_b[h=k] \lra \cM^{\pm}(\cU)^{\vee}_b[h=k]
 \end{equation}is equal to $r_{z_{\alpha}}^{e-1}$ (since the two maps (\ref{comp11}) and (\ref{equ: clp-cEp}) are equal, and their restriction on $\cM^{\pm}(\cU)^{\vee}_b[h=k]$ is injective using (\ref{equ: clp-fiber})).  However, $\iota'(\cM^{\pm}(\cU)^{\vee}_b[h=k])=\cM^{\pm}(\cU)^{\vee}_b[h=k]^{\cl}=\cM^{\pm}(\cU)^{\vee}_b[r_{z_{\alpha}}]$ and hence the morphism (\ref{pol0}) is zero, which contradicts $r_{z_{\alpha}}^{e-1}\neq 0$ on $\cM^{\pm}(\cU)^{\vee}_b[h=k]\cong (\co(\cU)/r_{z_\alpha}^e)^{\vee}$. The proposition follows.
\end{proof}
\begin{remark}\label{rem: clp-classical} If $z_{\alpha}$ is not critical then (\ref{equ: clp-adj}) is injective, and its restriction on the locally algebraic subrepresentations is given by the local-global compatibility in classical local Langlands correspondence. 
If $z_{\alpha}$ is critical then (\ref{equ: clp-adj}) factors as
\begin{equation}\label{compan} \big(\Ind_{\overline{B}(\Q_p)}^{\GL_2(\Q_p)} \chi_{z_{\alpha}} \delta_B^{-1}\big)^{\an} \twoheadlongrightarrow(\Ind_{\overline{B}(\Q_p)}^{\GL_2(\Q_p)} \chi_{z_{\alpha}}^c\delta_B^{-1})^{\an}\hooklongrightarrow  \widetilde{H}^1_{\et,c}(K^p,E)_{\overline{\rho}}^{\an,\pm}[\cH^p=\lambda_f].
\end{equation}
The existence of the second injection was proved in \cite{BE}, which is an important fact on local-global compatibility in $p$-adic local Langlands program in critical case.
\end{remark}
Let $D^0$ be the $\co_E$-module of degree zero divisors in $\bP^1(\Q)$.
Recall that by \cite[Prop. 4.2]{Em05}, one has the following pairing which interpolates the classical modular symbols
\begin{equation*}
  \widetilde{H}^1_{\et,c}(K^p,\co_E)_{\overline{\rho}}\times D^0 \lra \co_E.
\end{equation*}Evaluating at $\{\infty\}-\{0\}$ gives a continuous $E$-linear map (of norm less than $1$)
\begin{equation}\label{equ: clp-cpa}
\{\infty\}-\{0\}:  \widetilde{H}^1_{\et,c}(K^p,E)_{\overline{\rho}} \lra E.
\end{equation}
We fix an isomorphism of $\co(\cU)$-modules $\cM^{\pm}(\cU)\cong \co(\cU)$. The following composition
\begin{multline}\label{padicL}
  \cC^{\la}(\Z_p^{\times},E)\widehat{\otimes}_E \co(\cU)^{\vee}_b \cong \cC^{\la}(\Z_p^{\times},\co(\cU)^{\vee}_b) \hooklongrightarrow  \big(\Ind_{\overline{B}(\Q_p)}^{\GL_2(\Q_p)} \co(\cU)^{\vee}_b\otimes_E \delta_B^{-1}\big)^{\an}
  \\ \xlongrightarrow{\sim}  \big(\Ind_{\overline{B}(\Q_p)}^{\GL_2(\Q_p)} \cM^{\pm}(\cU)^{\vee}_b\otimes_E \delta_B^{-1}\big)^{\an}
  \xlongrightarrow{\iota^{\pm}}\widetilde{H}^1_{\et,c}(K^p,E)_{\overline{\rho}}^{\pm, \an} \xlongrightarrow{\{\infty\}-\{0\}} E,
\end{multline}
thus gives a global section $L^{\pm}$ of $\cU \times\cW$, where the second map sends $F$ to $I(F)$ with $I(F)$ supported in $\overline{B}(\Q_p)N(\Z_p^{\times})$ with $N(\Z_p^{\times}):=\begin{pmatrix}1 & \Z_p^{\times} \\ 0 & 1 \end{pmatrix}$, and $I(F)\bigg(\begin{pmatrix} 1 & x \\ 0 & 1 \end{pmatrix}\bigg)=F(x)$ for $x\in \Z_p^{\times}$. The function $L^{\pm}$ was constructed in \cite[Thm. 4.5.7]{Em1} when $z_{\alpha}$ is of non-critical slope. We also remark that $L^{\pm}$ depends on the (fixed) isomorphism $\cM^{\pm}(\cU)\cong \co(\cU)$, and hence is naturally defined up to units in $\co(\cU)$.

Denote by $L^{\pm}(z_{\alpha},-): \cC^{\la}(\Z_p^{\times}, E) \ra E$ the evaluation of $L^{\pm}$ at $z_{\alpha}$ which is  a distribution of $\Z_p^{\times}$. Up to non-zero scalars, $L^{\pm}(z_{\alpha},-)$ equals the distribution given by the following composition
\begin{equation}\label{equ: clp-mea}
    \cC^{\la}(\Z_p^{\times}, E)\hooklongrightarrow  \big(\Ind_{\overline{B}(\Q_p)}^{\GL_2(\Q_p)} \chi_{z_{\alpha}} \delta_B^{-1}\big)^{\an} \xlongrightarrow{(\ref{equ: clp-adj})}  \widetilde{H}^1_{\et,c}(K^p,E)_{\overline{\rho}}^{\an,\pm} \xlongrightarrow{(\ref{equ: clp-cpa})} E,
  \end{equation}
  where the first map sends $F$ to $I(F)$ with $I(F)$ supported in $\overline{B}(\Q_p)N(\Z_p^{\times})$, and $I(F)\bigg(\begin{pmatrix} 1 & x \\ 0 & 1 \end{pmatrix}\bigg)=F(x)$ for $x\in \Z_p^{\times}$.
  The following proposition is due to Emerton.
\begin{proposition} \label{intpl}
  (1) The distribution $L^{\pm}(z_{\alpha},-)$ is $\alpha$-tempered (see \cite[Def. 3.12]{Em05} for the definition of $\alpha$-tempered distributions).

  (2) Suppose $z_{\alpha}$ is not critical, then we have up to non-zero scalars (independent of $\phi x^j$),
\begin{equation}\label{equ: clp-int}
  L^{\pm}(z_{\alpha},\phi x^j)=e_p(\alpha,\phi x^j)\frac{m^{j+1}}{(-2\pi i)^j}\frac{j!}{\tau(\phi^{-1})}\frac{L_{\infty}(f\phi^{-1},j+1)}{\Omega_f^{\pm}},
\end{equation}
where $\phi: \Z_p^{\times}\ra \Q_p^{\times}$ is a smooth character of conductor $p^v$ satisfying $\phi(-1)=\pm 1$, $j\in\{0, \cdots, k\}$,
\begin{equation*}
  e_p(\alpha,\phi x^j):=\frac{1}{\alpha^v}\big(1-\frac{\phi^{-1}(p)\epsilon(p) p^{k-j}}{\beta}\big)\big(1-\frac{\phi(p) p^j}{\beta}\big),
\end{equation*}
$\tau(\phi^{-1})$ is the Gauss sum of $\phi$, $L_{\infty}$ is the archimedean $L$-function and $\Omega_f^{\pm}$ are the two archimedean periods of $f$.
\end{proposition}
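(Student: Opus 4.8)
The plan is to follow Emerton's treatment in \cite{Em05}, to which both assertions are essentially due. Recall that, up to a non-zero scalar, $L^{\pm}(z_{\alpha},-)$ is the distribution obtained from (\ref{equ: clp-mea}): one restricts the functional $\{\infty\}-\{0\}$ of (\ref{equ: clp-cpa}) along the chain $\cC^{\la}(\Z_p^{\times},E)\hooklongrightarrow \big(\Ind_{\overline{B}(\Q_p)}^{\GL_2(\Q_p)}\chi_{z_{\alpha}}\delta_B^{-1}\big)^{\an}\longrightarrow \widetilde{H}^1_{\et,c}(K^p,E)_{\overline{\rho}}^{\an,\pm}$, the first arrow sending $F$ to the function $I(F)$ supported on $\overline{B}(\Q_p)N(\Z_p^{\times})$ with $I(F)\bigg(\begin{pmatrix}1 & x\\ 0 & 1\end{pmatrix}\bigg)=F(x)$, and the second being (\ref{equ: clp-adj}).

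For (1), I would show that $\alpha$-temperedness is forced by the integral structure, together with the way the conductor filtration on $\Z_p^{\times}$ interacts with the group action. The key point is the identity $\begin{pmatrix}1 & px\\ 0& 1\end{pmatrix}=\varpi_1\begin{pmatrix}1 & x\\ 0& 1\end{pmatrix}\varpi_1^{-1}$, which turns multiplication by $p$ on $\Z_p^{\times}$ into the action of $\varpi_1$ on $\big(\Ind_{\overline{B}(\Q_p)}^{\GL_2(\Q_p)}\chi_{z_{\alpha}}\delta_B^{-1}\big)^{\an}$; passing through (\ref{equ: clp-adj}), which is $\cH^p\times T(\Q_p)$-equivariant, and the Jacquet functor, this becomes the $U_p$-action (with eigenvalue $\alpha$) on the $\lambda_f$-part of $\widetilde{H}^1_{\et,c}(K^p,E)_{\overline{\rho}}^{\an}$. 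Since (\ref{equ: clp-cpa}) is induced from the $\co_E$-valued pairing $\widetilde{H}^1_{\et,c}(K^p,\co_E)_{\overline{\rho}}\times D^0\to\co_E$ of \cite[Prop. 4.2]{Em05}, it has operator norm $\le 1$; hence for a smooth character $\phi$ of conductor $p^v$ the moments $L^{\pm}(z_{\alpha},\phi x^j)$ are bounded in absolute value by $|\alpha|^{-v}$ times a constant independent of $v$ and of $j$, which is exactly the defining estimate for an $\alpha$-tempered distribution; the precise bookkeeping is done in \cite[\S 3]{Em05}. This step uses nothing about the classicality of $z_{\alpha}$, so it applies in the critical case as well.

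For (2), assume $z_{\alpha}$ is not critical. By Remark \ref{rem: clp-classical} the map (\ref{equ: clp-adj}) is injective, and its restriction to the locally algebraic subrepresentation $\big(\Ind_{\overline{B}(\Q_p)}^{\GL_2(\Q_p)}\chi_{z_{\alpha}}^{\sm}\delta_B^{-1}\big)^{\infty}\otimes_E(\Sym^k E^2)^{\vee}$ realises the classical local--global compatibility; via (\ref{localg}) its image lies in the $f_{\alpha}$-part of $H^1_{\et,c}(K^p,\cF_{\alg(k+2,w)})_{\overline{\rho}}\otimes_E\alg(k+2,w)^{\vee}$, i.e.\ in the space of classical modular symbols of $f_{\alpha}$. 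For $\phi$ smooth of conductor $p^v$ with $\phi(-1)=\pm1$ and $j\in\{0,\dots,k\}$, the locally algebraic character $\phi x^j$ gives a vector of this space, and (\ref{equ: clp-mea}) evaluated at $\phi x^j$ reduces to the classical pairing of the $f_{\alpha}$-modular symbol with $\{\infty\}-\{0\}$ twisted by $\phi x^j$. I would then carry out the classical computation: expanding $f_{\alpha}(z)=f(z)-\tfrac{pb_p}{\alpha}f(pz)$ produces the normalization $\alpha^{-v}$ and the Euler factor $e_p(\alpha,\phi x^j)$; the period integral $\int_0^{i\infty}(f\phi^{-1})(z)\,z^j\,dz$ yields $\tfrac{j!}{(-2\pi i)^{j+1}}L_{\infty}(f\phi^{-1},j+1)$ up to the Gauss sum $\tau(\phi^{-1})$ and the level factor $m^{j+1}$; and dividing by the chosen trivialisation of $\cM^{\pm}(\cU)^{\vee}_b$ brings in the period $\Omega_f^{\pm}$. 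This is Birch's lemma; I would cite \cite{MTT}, \cite{PSc}, \cite{Bel} for the archimedean computation.

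The genuinely delicate point, as always for such formulas, is the bookkeeping of the normalizing constants in (2) --- matching the representation-theoretic normalization built into (\ref{equ: clp-mea}) and the choice of $I(F)$ with the analytic normalization of \cite{MTT}, which is exactly what accounts for the overall non-zero scalar independent of $\phi x^j$. Part (1) is comparatively formal once one tracks the $\varpi_1$-action and the $\co_E$-integrality of the pairing.
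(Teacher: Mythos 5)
Your proposal is correct and follows essentially the same route as the paper: part (1) is exactly the content of \cite[Lem. 3.22]{Em05} (temperedness of the $\widetilde{H}^1$-valued distribution via the $\varpi_1$/$U_p$ action and the integral lattice) combined with \cite[Lem. 3.20]{Em05} (composition with the norm-$\leq 1$ functional $\{\infty\}-\{0\}$), which is precisely what the paper cites, and part (2) reduces, as in the paper's appeal to \cite[Prop. 4.9]{Em05}, to the classical modular-symbol computation on the locally algebraic subrepresentation, valid for any non-critical $z_{\alpha}$ including the critical-slope case.
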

\begin{proof}
(1)  By \cite[Lem. 3.22]{Em05}, the composition
\begin{equation*}
  \cC^{\la}(\Z_p^{\times}, E)\hooklongrightarrow  \big(\Ind_{\overline{B}(\Q_p)}^{\GL_2(\Q_p)} \chi_{z_{\alpha}} \delta_B^{-1}\big)^{\an} \xlongrightarrow{(\ref{equ: clp-adj})}  \widetilde{H}^1_{\et,c}(K^p,E)_{\overline{\rho}}^{\pm}
\end{equation*}
is $\alpha$-tempered (in the sense of \cite[Def. 3.12]{Em05}).
Since (\ref{equ: clp-cpa}) sends $\widetilde{H}^1_{\et,c}(K^p,\co_E)_{\overline{\rho}}$ to $\co_E$, we deduce by \cite[Lem. 3.20]{Em05} that  $L^{\pm}(z_{\alpha},-)$
is $\alpha$-tempered.

(2) The interpolation result for the distribution (\ref{equ: clp-mea}) in non-critical slope case (i.e. $\val_p(\alpha)<k+1$) was proved in \cite[Prop. 4.9]{Em05}, and the critical slope (i.e. $\val_p(\alpha)=k+1$) but non-critical case follows by the same argument.
\end{proof}
\begin{remark} By results of Amice-V\'elu and Vishik (e.g. see \cite[Lem. 3.14]{Em05}), when ${z_{\alpha}}$ is not of critical slope, $L^{\pm}(z_{\alpha},-)$ is determined (up to non-zero scalars) by the interpolation property.
\end{remark}
\begin{proposition}\label{cri0}
  If $z_{\alpha}$ is critical, then $L^{\pm}(z_{\alpha},\phi x^j)=0$ for all $\phi x^j$ given as in (\ref{equ: clp-int}).
\end{proposition}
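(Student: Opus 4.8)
The plan is to combine the factorisation of the map (\ref{equ: clp-adj}) recorded in Remark \ref{rem: clp-classical} (equivalently, the second assertion of Proposition \ref{prop: clp-rest}) with the explicit shape of the locally algebraic subrepresentation $I(\chi_{z_\alpha}\delta_B^{-1})=(\Ind_{\overline{B}(\Q_p)}^{\GL_2(\Q_p)}\chi_{z_\alpha}^{\sm}\delta_B^{-1})^{\infty}\otimes_E(\Sym^k E^2)^{\vee}$ of $\big(\Ind_{\overline{B}(\Q_p)}^{\GL_2(\Q_p)}\chi_{z_\alpha}\delta_B^{-1}\big)^{\an}$. Recall from (\ref{equ: clp-mea}) that, up to a non-zero scalar, $L^{\pm}(z_\alpha,\phi x^j)$ is the value on $\phi x^j\in\cC^{\la}(\Z_p^{\times},E)$ of the composite $\cC^{\la}(\Z_p^{\times},E)\hooklongrightarrow\big(\Ind_{\overline{B}(\Q_p)}^{\GL_2(\Q_p)}\chi_{z_\alpha}\delta_B^{-1}\big)^{\an}\xlongrightarrow{\text{(\ref{equ: clp-adj})}}\widetilde{H}^1_{\et,c}(K^p,E)_{\overline{\rho}}^{\an,\pm}\xlongrightarrow{\text{(\ref{equ: clp-cpa})}}E$, where the first arrow sends $F$ to $I(F)$. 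Since $z_\alpha$ is critical, Remark \ref{rem: clp-classical} tells us that (\ref{equ: clp-adj}) factors through the quotient $\big(\Ind_{\overline{B}(\Q_p)}^{\GL_2(\Q_p)}\chi_{z_\alpha}^c\delta_B^{-1}\big)^{\an}$ appearing in (\ref{equ: clp-exact}), hence annihilates the kernel $I(\chi_{z_\alpha}\delta_B^{-1})$ of that surjection. It therefore suffices to show that $I(\phi x^j)\in I(\chi_{z_\alpha}\delta_B^{-1})$ whenever $\phi$ is a smooth character of $\Z_p^{\times}$ and $j\in\{0,\dots,k\}$.

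To check this, observe that $I(\phi x^j)$ is by construction supported on the open subset $\overline{B}(\Q_p)N(\Z_p^{\times})$ of $\GL_2(\Q_p)$, hence on a compact subset of the open Bruhat cell, and its restriction to $N(\Q_p)$ is the function equal to $\begin{pmatrix}1&x\\0&1\end{pmatrix}\mapsto\phi(x)x^j$ for $x\in\Z_p^{\times}$ and to $0$ otherwise; since $\phi$ is locally constant and $x^j$ is a monomial of degree $j\le k$, this restriction is of the form $\sum_{i=0}^{k}c_i(x)x^i$ with each $c_i$ locally constant of compact support. On the other hand, from the structure of principal series recalled in (\ref{equ: clp-exact}) — and using that $\delta_B$ is smooth, so that the algebraic part of $\chi_{z_\alpha}\delta_B^{-1}$ has dominant weight $k$ — a vector of $\big(\Ind_{\overline{B}(\Q_p)}^{\GL_2(\Q_p)}\chi_{z_\alpha}\delta_B^{-1}\big)^{\an}$ supported on a compact subset of the open Bruhat cell lies in $I(\chi_{z_\alpha}\delta_B^{-1})$ exactly when its restriction to $N(\Q_p)$ has this shape (these are the $N(\Q_p)$-restrictions of the smooth, compactly supported $(\Sym^k E^2)^{\vee}$-valued functions making up $I(\chi_{z_\alpha}\delta_B^{-1})$). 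Hence $I(\phi x^j)\in I(\chi_{z_\alpha}\delta_B^{-1})$, so (\ref{equ: clp-adj}) kills it, and consequently the composite (\ref{equ: clp-mea}) vanishes on $\phi x^j$; therefore $L^{\pm}(z_\alpha,\phi x^j)=0$.

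The only step needing real care is this last identification, namely matching the embedding $\cC^{\la}(\Z_p^{\times},E)\hooklongrightarrow\big(\Ind_{\overline{B}(\Q_p)}^{\GL_2(\Q_p)}\chi_{z_\alpha}\delta_B^{-1}\big)^{\an}$ with the model of the principal series on the open cell and reading off which open-cell vectors are locally algebraic; this is a routine unwinding of the locally analytic and the locally algebraic inductions, and it is here that the hypothesis $j\le k$ is used essentially (for $j>k$ the vector $I(\phi x^j)$ is no longer locally algebraic, in accordance with the fact that nothing forces $L^{\pm}(z_\alpha,\phi x^j)$ to vanish outside the range $0\le j\le k$). As a consistency check, when $z_\alpha$ is not critical the map (\ref{equ: clp-adj}) is injective by Remark \ref{rem: clp-classical}, so $I(\phi x^j)$ is not killed and $L^{\pm}(z_\alpha,\phi x^j)$ recovers the critical value of (\ref{equ: clp-int}) through the modular symbol; the vanishing proved here is exactly the reflection of the locally algebraic part of the principal series being destroyed by (\ref{equ: clp-adj}) at a critical point.
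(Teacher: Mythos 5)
Your proof is correct and follows essentially the same route as the paper: both arguments reduce the claim to the fact that $I(\phi x^j)$ lies in the locally algebraic subrepresentation $(\Ind_{\overline{B}(\Q_p)}^{\GL_2(\Q_p)}\chi_{z_\alpha}^{\sm}\delta_B^{-1})^{\infty}\otimes_E(\Sym^k E^2)^{\vee}$ (the paper packages this as the commutative diagram involving $\cC^{\lp,\leq k}(N(\Z_p^{\times}),E)$ cited from Emerton, while you unwind it directly on the open Bruhat cell), and then invoke Proposition \ref{prop: clp-rest} to conclude that (\ref{equ: clp-adj}) kills this subrepresentation at a critical point.
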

\begin{proof}
We denote by $\cC^{\lp,\leq k}(N(\Z_p^{\times}), E)$ the closed subspace of $\cC^{\la}(N(\Z_p^{\times}), E)$ consisting  of functions that are locally polynomial of degree $\leq k$. We have a natural commutative diagram (e.g. see \cite[(3.15) (3.16)]{Em05}):
\begin{equation*}
  \begin{CD}
    \cC^{\lp,\leq k}(N(\Z_p^{\times}), E) @>>> (\Ind_{\overline{B}(\Q_p)}^{\GL_2(\Q_p)} \chi_{z_{\alpha}}^{\sm}\delta_B^{-1})^{\infty}\otimes_E (\Sym^k E^2)^{\vee}  \\
    @VVV @VVV \\
    \cC^{\la}(N(\Z_p^{\times}), E) @>>> \big(\Ind_{\overline{B}(\Q_p)}^{\GL_2(\Q_p)} \chi_{z_{\alpha}} \delta_B^{-1}\big)^{\an}
  \end{CD}
\end{equation*}
where the horizontal maps are injective and are given as in the first map in (\ref{equ: clp-mea}).
The commutative diagram, together with Proposition \ref{prop: clp-rest}, imply that $L^{\pm}(z_{\alpha}, \psi)=0$ for all $\psi \in \cC^{\lp,\leq k}(N(\Z_p^{\times}), E) $. It is easy to see that all the $\phi x^j$ (given as in (\ref{equ: clp-int})) lie in $\cC^{\lp,\leq k}(N(\Z_p^{\times}), E) $. The proposition follows.
\end{proof}
\begin{remark}
 (1) It is  not clear to the author whether $L^{\pm}(z_{\alpha}, -)$ is zero or not.

 (2) We can also directly construct the critical $p$-adic $L$-functions $L^{\pm}(z_{\alpha},-)$ without using the two-variable $p$-adic $L$-functions $L^{\pm}(-,-)$. In fact, we have:
 \begin{multline}\label{mult1}
\dim_E   \Hom_{\GL_2(\Q_p)}\big((\Ind_{\overline{B}(\Q_p)}^{\GL_2(\Q_p)} \chi_{z_{\alpha}}^c\delta_B^{-1})^{\an}, \widetilde{H}^1_{\et,c}(K^p,E)_{\overline{\rho}}^{\an,\pm}[\cH^p=\lambda_f]\big) \\
=\dim_E  \Hom_{\GL_2(\Q_p)}\big( (\Ind_{\overline{B}(\Q_p)}^{\GL_2(\Q_p)} \chi_{z_{\alpha}}^{\sm}\delta_B^{-1})^{\infty}\otimes_E (\Sym^k E^2)^{\vee}, \widetilde{H}^1_{\et,c}(K^p,E)_{\overline{\rho}}^{\an,\pm}[\cH^p=\lambda_f]\big)=1
 \end{multline}
where the first equation is a consequence of the local-global compatibility of $p$-adic Langlands correspondence \cite[Thm. 1.2.1]{Em4}, and the second equation follows from the multiplicity one result.
Any non-zero element $j$ in the $1$-dimensional $E$-vector space on the left hand side of (\ref{mult1}) induces (where the first map is the same as the first map in (\ref{equ: clp-mea}))
 \begin{equation}\label{padicL2}
\cC^{\la}(\Z_p^{\times}, E)\hooklongrightarrow  \big(\Ind_{\overline{B}(\Q_p)}^{\GL_2(\Q_p)} \chi_{z_{\alpha}} \delta_B^{-1}\big)^{\an}
\twoheadlongrightarrow (\Ind_{\overline{B}(\Q_p)}^{\GL_2(\Q_p)} \chi_{z_{\alpha}}^c\delta_B^{-1})^{\an} \xlongrightarrow{j}  \widetilde{H}^1_{\et,c}(K^p,E)_{\overline{\rho}}^{\an,\pm} \xlongrightarrow{(\ref{equ: clp-cpa})} E.
 \end{equation}
By (\ref{mult1}), $j$ is equal to the second morphism in (\ref{compan}) up to non-zero scalars. We deduce  that (\ref{padicL2}) is equal to $L^{\pm}(z_{\alpha},-)$ up to non-zero scalars. We also remark that this construction does not rely on the smoothness of the eigencurve.
\end{remark}

\subsection{Properties}\label{sec: 22}Keep the notation in \S~\ref{sec: 21}, and assume $z_{\alpha}$ is critical. In this section, we study some properties of $L^{\pm}(z_{\alpha},-)$ and the two-variable $p$-adic $L$-functions $L^{\pm}(-,-)$.
\subsubsection{Relations with Bella\"iche's critical $p$-adic $L$-functions}
Recall in \cite{Bel}, Bellaiche constructed $2$-variable $\pm$-$p$-adic $L$-functions $\sL^{\pm}$ in a neighborhood of $z_{\alpha}$. In this section, we compare $\sL^{\pm}$ with our $L^{\pm}$ constructed in \S~\ref{sec: 21}.

Let $\cV\subseteq \cU$ be an affinoid open neighborhood of $z_{\alpha}$ in $\cC$ such that both $L^{\pm}$ and $\sL^{\pm}$ are defined, that the points of non-critical slope are Zariski-dense in $\cV$, and that $\co(\cV)$ is a PID.

The isomorphism $\Z_p^{\times} \cong (\Z/q \Z)^{\times}\times (1+q \Z_p)$ (with $q=p$ if $p\neq 2$, and $q=2p$ if $p=2$) induces an isomorphism of rigid spaces $\cW\cong \sqcup_{i\in (\Z/q \Z)^{\times}} \cW_i$ where all the $\cW_i$ are isomorphic to the rigid space over $E$ parametrizing continuous characters of $1+q\Z_p\cong \Z_p$. Note that the latter rigid space is isomorphic to the open unit disc: a point $z$ of $\bC_p$ with $|z|<1$ corresponds to the character $(1+z)^a$ for $a\in \Z_p$. For $\lambda\in \co(\cW)$ (resp. $\Lambda\in \co(\cV \times \cW)$, we denote by $\lambda^i\in \co(\cW_i)$ (resp. $\Lambda^i\in \co(\cV \times \cW_i)$) its restriction on $\cW_i$ (resp. on $\co(\cV \times \cW_i)$).

\begin{proposition}Let $i\in (\Z/q \Z)^{\times}$. Assume $\sL^{\pm}(z_{\alpha},-)^i\neq 0$ (resp. $L^{\pm}(z_{\alpha}, -)^i\neq 0$), then there exist an admissible affinoid $\cV'\subset \cV$ containing $z_{\alpha}$ and $a_{\pm}\in \co(\cV')$ such that $L^{\pm}(-,-)^i=a_{\pm} \sL^{\pm}(-,-)^i$ (resp. $\sL^{\pm}(-,-)^i=a_{\pm} L^{\pm}(-,-)^i$).
\end{proposition}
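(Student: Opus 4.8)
The plan is to compare the two two-variable families slice by slice in the first (weight) variable, using the uniqueness of $p$-adic $L$-functions at non-critical slope, and then to propagate the comparison to a full neighborhood of $z_{\alpha}$ by Zariski density of the non-critical-slope points. By symmetry I would only treat the case $\sL^{\pm}(z_{\alpha},-)^i\neq 0$. Recall that via the Amice transform the $i$-th factor of the space of locally analytic distributions on $\Z_p^{\times}$ is $\co(\cW_i)$, so $L^{\pm}(-,-)^i$ and $\sL^{\pm}(-,-)^i$ are genuine elements of $\co(\cV\times\cW_i)=\co(\cV)\widehat{\otimes}_E\co(\cW_i)$, and $\co(\cV)$ is a reduced one-dimensional domain (it is a PID by hypothesis); in particular $\cV$ is irreducible.

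First I would produce $a_{\pm}$. Since $\sL^{\pm}(z_{\alpha},-)^i\neq 0$, I can choose a locally analytic function $\phi_0$ on $\Z_p^{\times}$, supported on the subset of $\Z_p^{\times}$ lying over $i\in(\Z/q\Z)^{\times}$, with $\langle\sL^{\pm}(z_{\alpha},-),\phi_0\rangle\neq 0$. Pairing $\sL^{\pm}$ with $\phi_0$ gives $\sL^{\pm}(-,\phi_0)\in\co(\cV)$, which is nonzero at $z_{\alpha}$; as $\cV$ is irreducible of dimension one, the zero locus of $\sL^{\pm}(-,\phi_0)$ is a finite set avoiding $z_{\alpha}$, so after shrinking $\cV$ to an affinoid neighborhood $\cV'$ of $z_{\alpha}$ I may assume $\sL^{\pm}(-,\phi_0)\in\co(\cV')^{\times}$. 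Put $a_{\pm}:=L^{\pm}(-,\phi_0)/\sL^{\pm}(-,\phi_0)\in\co(\cV')$ and $G:=L^{\pm}(-,-)^i-a_{\pm}\,\sL^{\pm}(-,-)^i\in\co(\cV'\times\cW_i)$, so that $G(-,\phi_0)=0$ by construction.

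The heart of the argument is to show $G=0$. Let $z\in\cV'$ be a classical point of non-critical slope (hence very classical, by Proposition \ref{prop: clp-cla}). Both slices $L^{\pm}(z,-)$ and $\sL^{\pm}(z,-)$ are $\alpha$-tempered (Proposition \ref{intpl}(1) and its analogue for $\sL^{\pm}$) and interpolate, up to a nonzero scalar, the same critical $L$-values of the eigenform at $z$---for $\sL^{\pm}$ this is built into Bella\"iche's construction \cite{Bel}, and for $L^{\pm}$ it is Proposition \ref{intpl}(2) (applied at $z$ in place of $z_{\alpha}$). Since $z$ is of non-critical slope, the Amice--V\'elu/Vishik uniqueness (cf. the remark after Proposition \ref{intpl}) forces $L^{\pm}(z,-)$ and $\sL^{\pm}(z,-)$ to be proportional. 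As $\sL^{\pm}(-,\phi_0)$ is a unit on $\cV'$ we have $\langle\sL^{\pm}(z,-),\phi_0\rangle\neq 0$, so the proportionality constant at $z$ is $a_{\pm}(z)$; hence $L^{\pm}(z,-)^i=a_{\pm}(z)\,\sL^{\pm}(z,-)^i$, i.e. $G(z,-)=0$. Now, by Theorem \ref{thm: clp-eac}(2) (in the version valid for $\cC$), the non-critical-slope points accumulate at $z_{\alpha}$---recall $\chi_{z_{\alpha}}$ is locally algebraic---so they are Zariski-dense in the irreducible affinoid $\cV'$. Expanding $G$ as a power series in a coordinate on the open disc $\cW_i$ with coefficients in $\co(\cV')$, and exhausting $\cW_i$ by closed subdiscs, the vanishing of $G(z,-)$ over a Zariski-dense subset of the reduced space $\cV'$ forces each coefficient to vanish; thus $G=0$, i.e. $L^{\pm}(-,-)^i=a_{\pm}\,\sL^{\pm}(-,-)^i$ on $\cV'\times\cW_i$. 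The case $L^{\pm}(z_{\alpha},-)^i\neq 0$ is identical after exchanging $L^{\pm}$ and $\sL^{\pm}$.

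The step I expect to be the main obstacle is the slice-wise identification above: one must be certain that at every non-critical-slope classical point of $\cV'$ Bella\"iche's and Emerton's two-variable constructions specialize to the \emph{same} classical critical $L$-values---so that the Amice--V\'elu/Vishik uniqueness genuinely yields proportionality of the two slices (rather than merely both vanishing)---and that enough such points accumulate at $z_{\alpha}$ inside the possibly small neighborhood $\cV'$. Both facts are provided by the results recalled above, but they are precisely what upgrades the pointwise comparison into an identity of rigid analytic functions in a neighborhood of $z_{\alpha}$.
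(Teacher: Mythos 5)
Your proposal is correct and follows essentially the same route as the paper: both arguments rest on the proportionality of the two slices at classical non-critical-slope points (via the interpolation property and Amice--V\'elu/Vishik uniqueness) and then propagate the identity by Zariski density of such points in $\cV\times\cW_i$. The only cosmetic difference is that you shrink $\cV$ first so the denominator $\sL^{\pm}(-,\phi_0)$ becomes a unit and then show the difference $G$ vanishes, whereas the paper works in $\Fra(\co(\cV))$ with the determinant $d^{\pm}_{w_1,w_2}$ and shrinks only at the end.
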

\begin{proof}
We only prove the case where $\sL^{\pm}(z_{\alpha},-)^i\neq 0$, the other case being symmetric.
Let $w_1, w_2\in \cW_i$, put: \begin{equation*}d_{w_1,w_2}^{\pm}(-):=L^{\pm}(-,w_1)\sL^{\pm}(-,w_2)-L^{\pm}(-,w_2)\sL^{\pm}(-,w_1)\in \co(\cV).
\end{equation*}
We claim $d_{w_1,w_2}^{\pm}=0$ (thus independent of $w_1,w_2$). Indeed for any  point $z\in \cV$ of non-critical slope, we know the distributions $L^{\pm}(z,-)$, $\sL^{\pm}(z,-)$ equal up to non-zero scalars (by the interpolation property), thus $d_{w_1,w_2}^{\pm}(z)=0$. Since such points are Zariski-dense in $\cV$, the claim follows.

For $w\in \cW_i$ such that $\sL^{\pm}(z_{\alpha}, w)=\sL^{\pm}(z_{\alpha}, w)^i\neq 0$ (by assumption, such $w$ exists), we put $a_{\pm}':=\frac{L^{\pm}(-,w)}{\sL^{\pm}(-,w)}\in \Fra (\co(\cV))$. By the above claim, we see $a_{\pm}'$  is independent of the choice of $w$. We put  $a_{\pm}:=\frac{L^{\pm}(-,-)^i}{\sL^{\pm}(-,-)^i}\in \Fra(\co(\cV \times \cW_i))$. We claim $a_{\pm}'=a_{\pm}$ (in other words $a_{\pm}\in \Frac(\co(\cV))$). We view $a_{\pm}'$ as an element in $\Fra (\co(\cV \times \cW_i))$ by the natural inclusion. To prove $a_{\pm}'=a_{\pm}$, it is sufficient to prove
\begin{equation*}\mathfrak{d}:=\sL^{\pm}(-,w)L^{\pm}(-,-)^i-L^{\pm}(-,w)\sL^{\pm}(-,-)^i=0
\end{equation*}
 where $w$ satisfies $\sL^{\pm}(z_{\alpha},w)\neq 0$,  $\sL^{\pm}(-,w)$ and $L^{\pm}(-,w)$ are viewed as elements in $\co(\cV \times\cW_i)$ via the natural injection $\co(\cV)\hookrightarrow \co(\cV \times \cW_i)$ (and so $\mathfrak{d}\in \co(\cV \times\cW_i)$). Let $Z$ be the set of classical points of non-critical slope in $\cV$. As discussed above, we know
 \begin{equation*}\mathfrak{d}(z,w')=\sL^{\pm}(z,w)L^{\pm}(z,w')^i-L^{\pm}(z,w) \sL^{\pm}(z,w')^i=0
  \end{equation*}for all $z\in Z$, and $w'\in \cW_i$. Since $Z\times \cW_i$ is Zariski-dense in $\cV \times\cW_i$, we deduce $\mathfrak{d}=0$. Thus $a_{\pm}=a_{\pm}'\in \Fra(\co(\cV))$. The proposition follows by shrinking $\cV$ (and using the assumption $\sL^{\pm}(z_{\alpha},-)^i\neq 0$).
\end{proof}
Recall that the restriction $\sL^{\pm}(z_{\alpha},-)$ is equal, up to non-zero scalars, to the $\pm$-$p$-adic $L$-function for $f_{\alpha}$ constructed in \cite{Bel}, we can thus deduce from the proposition the following corollary.
\begin{corollary}Let $i\in (\Z/q\Z)^{\times}$. Assume $\sL^{\pm}(z_{\alpha},-)^i\neq 0$ (resp. $L^{\pm}(z_{\alpha},-)^i\neq 0$), then there exists $a^{\pm}\in E$ (enlarge $E$ if necessary) such that $L^{\pm}(z_{\alpha},-)^i=a^{\pm} \sL^{\pm}(z_{\alpha}, -)^i$ (resp. $\sL^{\pm}(z_{\alpha},-)^i=a^{\pm} L^{\pm}(z_{\alpha}, -)^i$).
\end{corollary}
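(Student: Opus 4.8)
The plan is to read off the corollary as the specialization of the preceding Proposition at the point $z_{\alpha}$; no new geometric input beyond what is already recorded is required.

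Concretely, in the case $\sL^{\pm}(z_{\alpha},-)^i\neq 0$ the Proposition provides an admissible affinoid $\cV'\subseteq \cV$ with $z_{\alpha}\in\cV'$ and an element $a_{\pm}\in\co(\cV')$ such that $L^{\pm}(-,-)^i = a_{\pm}\,\sL^{\pm}(-,-)^i$ holds in $\co(\cV'\times\cW_i)$. The one step is then to evaluate this equality at $z_{\alpha}$ in the first variable, i.e.\ to push it forward along $\{z_{\alpha}\}\times\cW_i\hookrightarrow\cV'\times\cW_i$. By construction $L^{\pm}(z_{\alpha},-)$ is the evaluation of the two-variable function $L^{\pm}(-,-)$ at $z_{\alpha}$, and likewise $\sL^{\pm}(z_{\alpha},-)$ is obtained from Bella\"iche's two-variable $\pm$-$p$-adic $L$-function by restricting to $z_{\alpha}$ (see \cite{Bel}); hence the pushed-forward identity reads $L^{\pm}(z_{\alpha},-)^i = a_{\pm}(z_{\alpha})\cdot\sL^{\pm}(z_{\alpha},-)^i$, and putting $a^{\pm}:=a_{\pm}(z_{\alpha})$ gives the statement. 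A priori $a^{\pm}$ lies in the residue field $k_{z_{\alpha}}$, a finite extension of $E$, which accounts for the clause ``enlarge $E$ if necessary'' (in fact $k_{z_{\alpha}}=E$ here, since $z_{\alpha}$ is an $E$-point). The ``resp.'' case is identical, using instead the symmetric output $\sL^{\pm}(-,-)^i = a_{\pm}\,L^{\pm}(-,-)^i$ of the Proposition under the hypothesis $L^{\pm}(z_{\alpha},-)^i\neq 0$.

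I do not expect any genuine obstacle, since the Proposition carries all of the weight; the only thing to be checked carefully is the compatibility of the two constructions of $L^{\pm}(z_{\alpha},-)$ and $\sL^{\pm}(z_{\alpha},-)$ with specialization at $z_{\alpha}$, which is built into their definitions. Finally, combining the corollary with the already-recalled identification of $\sL^{\pm}(z_{\alpha},-)$ with the $\pm$-$p$-adic $L$-function of $f_{\alpha}$ of \cite{Bel}, up to a nonzero scalar, we obtain the comparison between our $L^{\pm}(z_{\alpha},-)$ and Bella\"iche's $p$-adic $L$-function.
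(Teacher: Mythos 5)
Your proposal is correct and coincides with the paper's (essentially implicit) argument: the corollary is obtained by evaluating the identity $L^{\pm}(-,-)^i=a_{\pm}\,\sL^{\pm}(-,-)^i$ from the preceding proposition at the $E$-point $z_{\alpha}$ and setting $a^{\pm}:=a_{\pm}(z_{\alpha})$. Your remarks on the compatibility with specialization and on the residue field are exactly the points the paper leaves to the reader.
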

\subsubsection{Secondary critical $p$-adic $L$-functions}
For $i=1,\cdots, e-1$, we put (shrinking $\cU$ if necessary) $L^{\pm}_{i}\in \co(\cU \times \cW)$ such that (noting that $r_{z_{\alpha}}$ is a uniformiser of $\co(\cU)$ at $z_{\alpha}$)
\begin{equation*}L^{\pm}_{i}(t, \sigma):=\frac{\partial^i L^{\pm}}{\partial r_{z_{\alpha}}^{i}}(t,\sigma), \forall \ (t,\sigma)\in \cU \times \cW.
\end{equation*}
The statement in the following proposition was proved in \cite[\S~4.4]{Bel} for Bella\"iche's two-variable $p$-adic $L$-functions. We show that it also holds for our $L^{\pm}(-,-)$.
\begin{proposition}
  (1) For $i=1,\cdots, e-2$, $L^{\pm}_i(z_{\alpha}, \phi x^j)=0$ for all $\phi x^j$ given as in (\ref{equ: clp-int}).

  (2) With the notation of (\ref{equ: clp-int}), we have up to non-zero scalars (independent of $\phi x^j$):
\begin{equation}\label{int2}
  L^{\pm}_{e-1}(z_{\alpha},\phi x^j)=e_p(\alpha,\phi x^j)\frac{m^{j+1}}{(-2\pi i)^j}\frac{j!}{\tau(\phi^{-1})}\frac{L_{\infty}(f\phi^{-1},j+1)}{\Omega_f^{\pm}}.
\end{equation}
\end{proposition}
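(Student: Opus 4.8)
The plan is to reduce the statement about $L_i^{\pm}$ to the interpolation statement already proved for $L^{\pm}$ in Proposition \ref{intpl} and the vanishing statement in Proposition \ref{cri0}, by a careful analysis of the order of vanishing of $L^{\pm}$ along the divisor $\{z_{\alpha}\}\times\cW$ inside $\cU\times\cW$. First I would recall from the construction that $L^{\pm}$ is obtained by composing the injection $\cC^{\la}(\Z_p^{\times},E)\widehat{\otimes}_E\co(\cU)^{\vee}_b\hookrightarrow(\Ind_{\overline{B}(\Q_p)}^{\GL_2(\Q_p)}\co(\cU)^{\vee}_b\otimes_E\delta_B^{-1})^{\an}$ with the map $\iota^{\pm}$ of Theorem \ref{adj003}, and that $\iota^{\pm}$ itself is built from the composition (\ref{equ: clp-cEp}), which carries the factor $r_{z_{\alpha}}^{e-1}$. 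Thus, morally, $L^{\pm}$ already ``contains" a factor of $r_{z_{\alpha}}^{e-1}$: restricting to the fiber $\co(\cU)^{\vee}_b[r_{z_{\alpha}}]\cong\chi_{z_{\alpha}}$ kills $r_{z_{\alpha}}^{e-1}$ unless $e=1$, which is precisely why $L^{\pm}(z_{\alpha},-)$ vanishes on locally polynomial functions of degree $\le k$ (Proposition \ref{cri0}). The key point to make precise is that $L^{\pm}$, viewed as an element of $\co(\cU\times\cW)$, is divisible by $r_{z_{\alpha}}^{e-1}$ but not by $r_{z_{\alpha}}^{e}$, when tested against functions $\phi x^j$ as in (\ref{equ: clp-int}).

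The core of the argument is then a Taylor expansion of $L^{\pm}$ in the uniformiser $r_{z_{\alpha}}$ at $z_{\alpha}$. Since $r_{z_{\alpha}}$ is a uniformiser of $\co(\cU)$ at $z_{\alpha}$, for a fixed locally polynomial character $\psi=\phi x^j$ one may expand $L^{\pm}(t,\psi)=\sum_{n\ge 0}\frac{1}{n!}L^{\pm}_n(z_{\alpha},\psi)\,r_{z_{\alpha}}(t)^n$ in a neighborhood of $z_{\alpha}$. Writing the evaluation at level $n$ in terms of the adjunction data: evaluating $L_n^{\pm}$ at $z_{\alpha}$ on $\psi$ amounts, via the $n$-th infinitesimal neighborhood of $z_{\alpha}$, to pairing $\psi$ against the image of $\cM^{\pm}(\cU)^{\vee}_b[r_{z_{\alpha}}^{n+1}]$ under $\iota^{\pm}$ followed by $\{\infty\}-\{0\}$. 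Because $\iota^{\pm}$ factors through multiplication by $r_{z_{\alpha}}^{e-1}$ on $\cM^{\pm}(\cU)^{\vee}_b$ (this is the content of (\ref{equ: clp-cEp}) and of $\cI_z^{\pm}=r_{z_{\alpha}}^{e-1}\co(\cU)$ from (\ref{Iz2})), the induced pairing on the $n$-th graded piece is zero for $n<e-1$: here I would reuse verbatim the local-polynomial/classicality argument from the proof of Proposition \ref{cri0}, noting that for $n\le e-2$ the relevant image still lands in a locally polynomial subrepresentation killed by the pairing on $\cC^{\lp,\le k}$. That gives part (1). For part (2), the $(e-1)$-st graded piece is exactly $\cM^{\pm}(\cU)^{\vee}_b[r_{z_{\alpha}}]\cong\chi_{z_{\alpha}}$ (as in (\ref{chaofcl}), (\ref{inja})), on which $\iota^{\pm}$ restricts, up to the factor that got cancelled, to the \emph{same} map (\ref{equ: clp-adj}) whose non-critical-slope avatar was treated in Proposition \ref{intpl}(2). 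Hence $L^{\pm}_{e-1}(z_{\alpha},\phi x^j)$ is computed by literally the composition (\ref{equ: clp-mea}) — which is the map whose interpolation Emerton computed — giving the formula (\ref{int2}) with exactly the same Euler factor $e_p(\alpha,\phi x^j)$, archimedean period and Gauss sum, up to a non-zero scalar independent of $\phi x^j$.

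Concretely, the steps in order are: (i) record that $L^{\pm}\in\co(\cU\times\cW)$ and that, along $\{z_{\alpha}\}\times\cW$, it is divisible by $r_{z_{\alpha}}^{e-1}$, so that $L_i^{\pm}(z_{\alpha},-)=0$ automatically for $i<e-1$ as \emph{distributions}, hence in particular on all $\phi x^j$ — this already gives (1); (ii) identify the leading term $\frac{1}{(e-1)!}L^{\pm}_{e-1}(z_{\alpha},-)$ with the distribution attached to the fiber map $\chi_{z_{\alpha}}\hookrightarrow\cM^{\pm}(\cU)^{\vee}_b\to J_B(\widetilde H^1_{\et,c}(K^p,E)^{\an}_{\overline\rho})$ after dividing out $r_{z_{\alpha}}^{e-1}$, using the commutative diagram in the proof of Proposition \ref{prop: clp-rest} and the equality (\ref{Iz2}); (iii) observe that this fiber map, composed with (\ref{equ: clp-cpa}), is precisely the composition (\ref{equ: clp-mea}) defining $L^{\pm}(z_{\alpha},-)$ in the non-critical case, so Emerton's computation in \cite[Prop. 4.9]{Em05} (as invoked in Proposition \ref{intpl}(2)) applies unchanged and yields (\ref{int2}). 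The main obstacle I anticipate is step (ii): one must be careful that ``differentiating $e-1$ times in $r_{z_{\alpha}}$ and then restricting to $z_{\alpha}$" genuinely extracts the $r_{z_{\alpha}}$-adic leading coefficient of $\iota^{\pm}$ as a map of $\co(\cU)$-modules — i.e. that the $n$-th partial derivative $\partial^n L^{\pm}/\partial r_{z_{\alpha}}^n$ at $z_{\alpha}$ really is computed by the $(n+1)$-st infinitesimal fiber $(\co(\cU)/r_{z_{\alpha}}^{n+1})^\vee$ paired with the appropriate truncation of $\iota^{\pm}$ — and that no cross terms from the $\cW$-variable interfere (they do not, since $r_{z_{\alpha}}$ is pulled back from $\cU$ and the $h=k$ condition pins down the $\cW$-weight). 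Once that bookkeeping is set up, both parts follow formally from the results already established in \S\ref{sec: 21}.
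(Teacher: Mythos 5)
Your overall strategy (Taylor-expand $L^{\pm}$ in the uniformiser $r_{z_{\alpha}}$, show the first $e-1$ coefficients vanish on the relevant test functions, and identify the $(e-1)$-st coefficient with a fiber map to which Emerton's interpolation computation applies) is the same as the paper's. But the two key steps are not correctly justified, and one of them is asserted in a form that is actually false.

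First, the divisibility claim. You assert in step (i) that $L^{\pm}$ is divisible by $r_{z_{\alpha}}^{e-1}$ in $\co(\cU\times\cW)$, so that $L^{\pm}_i(z_{\alpha},-)=0$ \emph{as distributions} for $i<e-1$. For $e\geq 2$ this would in particular force $L^{\pm}(z_{\alpha},-)=0$ as a distribution, which is precisely what the paper states is \emph{not} known (and which would contradict the mechanism of Proposition \ref{prop: clp-rest}: $\iota^{\pm}$ cannot factor through multiplication by $r_{z_{\alpha}}$ on the induced representation, since its fiber (\ref{equ: clp-adj}) is non-zero). The factorization through $r_{z_{\alpha}}^{e-1}$ in (\ref{equ: clp-cEp}) and (\ref{Iz2}) lives on the Jacquet-module side; transporting it to values of $L^{\pm}$ is exactly the non-trivial point, and it only works on locally polynomial functions of degree $\leq k$. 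The mechanism is the $(\ug,B(\Q_p))$-equivariant Verma-type map (\ref{Verma}), under which $X_-^{j}\otimes f$ goes to $\big(\prod_{l=0}^{j-1}(h-l)\big)f\cdot z^{j}$: testing against $1_{\cV}z^{j}$ produces the factor $\Delta_j=\prod_{l=0}^{j-1}(h-l)$, and one gets $\Delta_j\,L^{\pm}(-,1_{\cV}z^j)=G_{\cV,j}\,r_{z_{\alpha}}^{e-1}$ with $G_{\cV,j}\in\co(\cU)$. The divisibility $r_{z_{\alpha}}^{e-1}\mid L^{\pm}(-,1_{\cV}z^j)$ then follows only because $\Delta_j$ is coprime to $r_{z_{\alpha}}$ when $j\leq k$ (using $(h-k)=(r_{z_{\alpha}}^{e})$); for $j>k$ it fails. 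This degree restriction enters nowhere in your argument, which is a sign the essential computation is missing. "Reusing verbatim the argument of Proposition \ref{cri0}" does not supply it: that argument only treats the zeroth-order term.

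Second, the identification of the leading coefficient. You claim $L^{\pm}_{e-1}(z_{\alpha},-)$ is computed by "literally the composition (\ref{equ: clp-mea})". But (\ref{equ: clp-mea}) factors through (\ref{equ: clp-adj}), whose restriction to locally algebraic vectors is \emph{zero} in the critical case (Proposition \ref{prop: clp-rest}); that composition therefore kills every $\phi x^j$ with $j\leq k$ (this is Proposition \ref{cri0}), and cannot produce the non-trivial interpolation formula (\ref{int2}). The correct object is a different map: the embedding $\iota_{z_{\alpha}}^{\pm}$ of the locally algebraic principal series $(\Ind_{\overline{B}(\Q_p)}^{\GL_2(\Q_p)}\chi_{z_{\alpha}}^{\sm}\delta_B^{-1})^{\infty}\otimes_E(\Sym^kE^2)^{\vee}$ into $\widetilde{H}^1_{\et,c}(K^p,E)_{\overline{\rho}}^{\an,\pm}$, obtained from the fiber map $\chi_{z_{\alpha}}\hookrightarrow\cM^{\pm}(\cU)^{\vee}_b\to J_B(\cdots)$ (which lands in classical vectors by (\ref{chaofcl})), not from the full locally analytic induction. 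It is to this map, composed with $\{\infty\}-\{0\}$, that Emerton's computation applies; relating $L^{\pm}_{e-1}(z_{\alpha},1_{\cV}z^j)$ to it again requires the diagram chase through (\ref{Verma}) and the cancellation of $\Delta_j$ against $\Delta_j(z_{\alpha})\neq 0$. As written, your step (iii) points at a map that vanishes on the test functions in question.
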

\begin{proof}
Step (a). We unwind a bit Emerton's adjunction formula (\cite[Thm. 0.13]{Em2}).
By \cite[Thm. 3.5.6]{Em11}, the composition in (\ref{equ: clp-cEp}) first induces $B(\Q_p)$-equivariant morphisms
\begin{equation}\label{adj000}
  \cC^{\sm}_c(N(\Q_p), \cM^{\pm}(\cU)^{\vee}_b \otimes_E \delta_B^{-1}) \xlongrightarrow{r_{z_{\alpha}}^{e-1}} \cC_c^{\sm}(N(\Q_p), \cM^{\pm}(\cU)^{\vee}_b\otimes_E \delta_B^{-1})\xlongrightarrow{\iota_0} \widetilde{H}^1_{\et,c}(K^p,E)_{\overline{\rho}}^{\an, \pm},
\end{equation}
where $\cC^{\sm}_c(N(\Q_p),\cM^{\pm}(\cU)^{\vee}_b \otimes_E \delta_B^{-1})$ denotes the space of locally constant, compactly supported functions on $N(\Q_p)$ with values in $\cM^{\pm}(\cU)^{\vee}_b \otimes_E \delta_B^{-1}$ (which is equipped with a natural $B(\Q_p)$-action as in \cite[\S~3.5]{Em11}). The morphisms in (\ref{adj000}) further induce $(\ug, B(\Q_p))$-equivariant morphisms (cf. \cite[(5.11)]{Em07})
\begin{multline}\label{adj001}
 \text{U}(\ug) \otimes_{\text{U}(\ub)} \cC^{\sm}_c(N(\Q_p), \cM^{\pm}(\cU)^{\vee}_b \otimes_E \delta_B^{-1})\xlongrightarrow{r_{z_{\alpha}}^{e-1}}\text{U}(\ug) \otimes_{\text{U}(\ub)} \cC_c^{\sm}(N(\Q_p), \cM^{\pm}(\cU)^{\vee}_b\otimes_E \delta_B^{-1})\\ \xlongrightarrow{\iota_1^{\pm}} \widetilde{H}^1_{\et,c}(K^p,E)_{\overline{\rho}}^{\an, \pm}.
\end{multline}
On the other hand, we have a natural $(\ug, B(\Q_p))$-equivariant morphism (cf. \cite[(2.8.7)]{Em2})
\begin{multline}\label{Verma}
 \text{U}(\ug) \otimes_{\text{U}(\ub)} \cC^{\sm}_c(N(\Q_p), \cM^{\pm}(\cU)^{\vee}_b \otimes_E \delta_B^{-1}) \lra \cC^{\lp}_c(N(\Q_p), \cM^{\pm}(\cU)^{\vee}_b \otimes_E \delta_B^{-1}) \\
 \cong \cC^{\sm}_c(N(\Q_p, \cM^{\pm}(\cU)^{\vee}_b \otimes_E \delta_B^{-1}) \otimes_E E[z],
\end{multline}
where $\cC^{\lp}_c(N(\Q_p), \cM^{\pm}(\cU)^{\vee}_b \otimes_E \delta_B^{-1})$ denotes the space of locally polynomial functions on $N(\Q_p)$ with values in $\cM^{\pm}(\cU)^{\vee}_b \otimes_E \delta_B^{-1}$ and we refer to \cite[Def. 2.5.21]{Em2} for the precise definition and for the second isomorphism in (\ref{Verma}). As in \cite[(4.5.16)]{Em1},
the morphism in (\ref{Verma}) is given by
\begin{equation*}X_-^{l} \otimes f \mapsto ((\prod_{i=0}^{l-1} (h-i)) \cdot f) z^l
 \end{equation*}
 for $l\in \Z_{\geq 1}$, and $f\in \cC^{\sm}_c(N(\Q_p), \cM^{\pm}(\cU)^{\vee}_b \otimes_E \delta_B^{-1})$ (and $X_-^0 \otimes f \mapsto f$). As in the last paragraph of the proof of \cite[Lem. 4.5.12]{Em1}, the first morphism in (\ref{Verma}) is surjective. 
Since the composition in (\ref{equ: clp-cEp}) is balanced, the composition in (\ref{adj001}) factors through (\ref{Verma}) (noting that in contrary $\iota_1^{\pm}$ does not factor through (\ref{Verma})). In summary, we have a $(\ug, B(\Q_p))$-equivariant commutative diagram
\begin{equation}\label{comt00}
\begin{CD}
  \text{U}(\ug) \otimes_{\text{U}(\ub)} \cC^{\sm}_c(N(\Q_p), \cM^{\pm}(\cU)^{\vee}_b \otimes_E \delta_B^{-1})@> r_{z_{\alpha}}^{e-1}>> \text{U}(\ug) \otimes_{\text{U}(\ub)} \cC^{\sm}_c(N(\Q_p), \cM^{\pm}(\cU)^{\vee}_b\otimes_E \delta_B^{-1}) \\
  @VVV @V\iota_1^{\pm} VV \\
  \cC^{\lp}_c(N(\Q_p), \cM^{\pm}(\cU)^{\vee}_b \otimes_E \delta_B^{-1}) @>>> \widetilde{H}^1_{\et,c}(K^p,E)_{\overline{\rho}}^{\an, \pm}
\end{CD}.
\end{equation}
We remark that the morphism (\ref{equ: clp-ppA}) is actually induced by the bottom map in (\ref{comt00}) (e.g. see \cite[Cor. 4.3.3]{Em2}).
For $N(\Z_p^{\times})=\begin{pmatrix} 1 & \Z_p^{\times} \\ 0 & 1\end{pmatrix}$, we have a natural injection $\cC^*(N(\Z_p^{\times}), -) \hookrightarrow \cC^*_c(N(\Q_p),-)$,  sending a function $F$ to the function whose value at $x\in N(\Z_p^{\times})$ is $F(x)$ and $0$ outside  $N(\Z_p^{\times})$.  We then easily deduce from (\ref{comt00}) a commutative diagram (which is $\text{U}(\ug)$-equivariant)
\begin{equation}\label{diagK}
\begin{CD}
  \text{U}(\ug) \otimes_{\text{U}(\ub)} \cC^{\sm}(N(\Z_p^{\times}), \cM^{\pm}(\cU)^{\vee}_b \otimes_E \delta_B^{-1})@> r_{z_{\alpha}}^{e-1}>> \text{U}(\ug) \otimes_{\text{U}(\ub)} \cC^{\sm}(N(\Z_p^{\times}), \cM^{\pm}(\cU)^{\vee}_b\otimes_E \delta_B^{-1}) \\
  @VVV @V\iota_1^{\pm} VV \\
  \cC^{\lp}(N(\Z_p^{\times}), \cM^{\pm}(\cU)^{\vee}_b \otimes_E \delta_B^{-1}) @>>> \widetilde{H}^1_{\et,c}(K^p,E)_{\overline{\rho}}^{\an, \pm}
\end{CD}.
\end{equation}
The bottom map of (\ref{diagK}) is in fact equal to the composition
\begin{multline*}
\cC^{\lp}(N(\Z_p^{\times}), \cM^{\pm}(\cU)^{\vee}_b \otimes_E \delta_B^{-1}) \hooklongrightarrow \cC^{\la}(N(\Z_p^{\times}), \cM^{\pm}(\cU)^{\vee}_b \otimes_E \delta_B^{-1})\\  \lra
\big(\Ind_{\overline{B}(\Q_p)}^{\GL_2(\Q_p)} \cM^{\pm}(\cU)^{\vee}_b\otimes_E \delta_B^{-1}\big)^{\an} \xlongrightarrow{\iota^{\pm}} \widetilde{H}^1_{\et,c}(K^p,E)_{\overline{\rho}}^{\an, \pm},
\end{multline*}
where the middle map is given in the same way as in the discussion below  (\ref{padicL}).
\\

Step (b). Let $\cV$ be a compact open subset of $\Z_p^{\times}$, and $1_{\cV}\in \cC^{\infty}(\Z_p^{\times},E)$ be the function with $1_{\cV}(x)=\begin{cases}
  1 & x\in \cV \\ 0 & \text{otherwise}
\end{cases}$. For $j\in \Z_{\geq 0}$ and $i=1,\cdots, e-1$, we have
\begin{equation}\label{deri0}
L^{\pm}_i(-,1_{\cV} z^j)=\frac{d L^{\pm}(-,1_{\cV} z^j)}{d^i r_{z_{\alpha}}}.
\end{equation}Recall that we have fixed an isomorphism $\co(\cU)\cong \cM^{\pm}(\cU)$, which induces an isomorphism $\co(\cU)^{\vee}_b \cong \cM^{\pm}(\cU)^{\vee}_b\otimes_E \delta_B^{-1}$, that we fix in the sequel.
By definition, $L^{\pm}(-,1_{\cV} z^j)\in \co(\cU)$ is characterized by the following composition
\begin{equation}\label{Lfunval}
  \co(\cU)^{\vee}_b \cong \cM^{\pm}(\cU)^{\vee}_b \otimes_E \delta_B^{-1} \xlongrightarrow{h_{\cV, j}} \cC^{\lp}(N(\Z_p^{\times}), \cM^{\pm}(\cU)^{\vee}_b \otimes_E \delta_B^{-1}) \\ \lra \widetilde{H}^1_{\et,c}(K^p,E)_{\overline{\rho}}^{\an, \pm} \xlongrightarrow{\{\infty\}-\{0\}} E,
\end{equation}
where  the map $h_{\cV,j}$ sends $m \in  \cM^{\pm}(\cU)^{\vee}_b \otimes_E \delta_B^{-1}$ to the element:
\begin{equation*}m\otimes 1_{\cV} \otimes z^j \in \cC^{\sm}(N(\Z_p^{\times}), E) \otimes_E \cM^{\pm}(\cU)^{\vee}_b \otimes_E \delta_B^{-1}) \otimes_E E[z]
\cong \cC^{\lp}(N(\Z_p^{\times}), \cM^{\pm}(\cU)^{\vee}_b \otimes_E \delta_B^{-1}).
\end{equation*} Denote by $F_{\cV, j}\in \co(\cU)$ the element given by the following composition
\begin{multline}\label{Fvj}
  \co(\cU)^{\vee}_b \cong \cM^{\pm}(\cU)^{\vee}_b \otimes_E \delta_B^{-1} \xlongrightarrow{g_{\cV,j}} \text{U}(\ug) \otimes_{\text{U}(\ub)} \cC^{\sm}(N(\Z_p^{\times}), \cM^{\pm}(\cU)^{\vee}_b \otimes_E \delta_B^{-1})\\  \xlongrightarrow{\iota_1^{\pm}\circ r_{z_{\alpha}}^{e-1}} \widetilde{H}^1_{\et,c}(K^p,E)_{\overline{\rho}}^{\an, \pm}\xlongrightarrow{\{\infty\}-\{0\}}  E,
\end{multline}
where $g_{\cV,j}$ sends $m$ to $X_-^{j} \otimes 1_{\cV} \otimes m$.
By the description of (\ref{Verma}), we see
\begin{equation}\label{fomul1}F_{\cV,j}=\Delta_j L^{\pm}(-,1_{\cV} z^j)
\end{equation}with $\Delta_j=\prod_{l=0}^{j-1}(h-l)$ for $j\in \Z_{>0}$, and $\Delta_0=1$. Similarly, the composition
\begin{multline}\label{Gvj}
  \co(\cU)^{\vee}_b \cong \cM^{\pm}(\cU)^{\vee}_b \otimes_E \delta_B^{-1} \xlongrightarrow{g_{\cV,j}} \text{U}(\ug) \otimes_{\text{U}(\ub)} \cC^{\sm}(N(\Z_p^{\times}), \cM^{\pm}(\cU)^{\vee}_b \otimes_E \delta_B^{-1})\\ \xlongrightarrow{\iota_1^{\pm}} \widetilde{H}^1_{\et,c}(K^p,E)_{\overline{\rho}}^{\an, \pm} \xlongrightarrow{\{\infty\}-\{0\}}  E
\end{multline}
gives an element $G_{\cV,j}\in \co(\cU)$ as well. Comparing  (\ref{Fvj}) with (\ref{Gvj}), we see
\begin{equation}\label{fomul2}F_{\cV,j}=G_{\cV,j} r_{z_{\alpha}}^{e-1}.
 \end{equation}
Suppose $j\leq k$, then $\Delta_j$ and $r_{z_{\alpha}}$ are coprime (using $(r_{z_{\alpha}}^{e})=(h-k)$). By (\ref{fomul1}) and (\ref{fomul2}),  we deduce (shrinking $\cU$ if necessary) that there exists $H_{\cV,j}\in \co(\cU)$ such that $L^{\pm}(-,1_{\cV} z^j)=r_{z_{\alpha}}^{e-1} H_{\cV,j}$ and $G_{\cV,j}=\Delta_j H_{\cV,j}$. Together with (\ref{deri0}), the part (1) follows and we have $L_{e-1}^{\pm}(z_{\alpha},1_{\cV}z^j)=(e-1)!H_{\cV,j}(z_{\alpha})$.
\\

Step (c). We prove the part (2). We fix an isomorphism of $E$-vector space $\chi_{z_{\alpha}}\cong E$. Denote by $i_{z_{\alpha}}: \co(\cU)\twoheadrightarrow \co(\cU)/r_{z_{\alpha}}=E$ the natural projection, which induces $i_{z_{\alpha}}^*:  \chi_{z_{\alpha}} \hookrightarrow \co(\cU)^{\vee}_b$. We remark that for $s\in \co(\cU)$ the evaluation $s(z_{\alpha})$ is given by $i_{z_{\alpha}}(s)$. Since $i_{z_{\alpha}}^*$ has image in $\cM^{\pm}(\cU)^{\vee}_b[h=k]^{\cl}$ (by (\ref{chaofcl})), as in \cite[Ex. 5.22]{Em07}, the composition
\begin{equation*}
  \chi_{z_{\alpha}} \hookrightarrow \co(\cU)^{\vee}_b\cong \cM^{\pm}(\cU)^{\vee}_b  \lra J_B\big(\widetilde{H}^1_{\et,c}(K^p,E)_{\overline{\rho}}^{\an, \pm}\big)
\end{equation*}
is balanced and induces an injection
\begin{equation*}
  \iota_{z_{\alpha}}^{\pm}:   (\Ind_{\overline{B}(\Q_p)}^{\GL_2(\Q_p)} \chi_{z_{\alpha}}^{\sm}\delta_B^{-1})^{\infty}\otimes_E (\Sym^k E^2)^{\vee}  \hooklongrightarrow  \widetilde{H}^1_{\et,c}(K^p,E)_{\overline{\rho}}^{\an,\pm}.
\end{equation*}
We have hence a commutative diagram
\begin{equation}\label{comt01}
  \begin{CD}
     \text{U}(\ug) \otimes_{\text{U}(\ub)} \cC^{\sm}(N(\Z_p^{\times}), \chi_{z_{\alpha}} \otimes_E \delta_B^{-1}) @> i_{z_{\alpha}}^*>>    \text{U}(\ug) \otimes_{\text{U}(\ub)} \cC^{\sm}_c(N(\Z_p^{\times}), \co(\cU)^{\vee}_b\otimes_E \delta_B^{-1})\\
     @VVV @V \iota_1^{\pm} VV \\
     (\Ind_{\overline{B}(\Q_p)}^{\GL_2(\Q_p)} \chi_{z_{\alpha}}^{\sm})^{\infty}\otimes_E (\Sym^k E^2)^{\vee} @> \iota_{z_{\alpha}}^{\pm} >>  \widetilde{H}^1_{\et,c}(K^p,E)_{\overline{\rho}}^{\an,\pm}
  \end{CD}
\end{equation}
where the top morphism is induced by $i_{z_{\alpha}}^*$, and  the left hand side map factors as
\begin{multline*}
\text{U}(\ug) \otimes_{\text{U}(\ub)} \cC^{\sm}(N(\Z_p^{\times}), \chi_{z_{\alpha}} \otimes_E \delta_B^{-1})  \lra \cC^{\lp, \leq k}(N(\Z_p^{\times}), \chi_{z_{\alpha}} \otimes_E \delta_B^{-1})\\
\hooklongrightarrow  (\Ind_{\overline{B}(\Q_p)}^{\GL_2(\Q_p)} \chi_{z_{\alpha}}^{\sm}\delta_B^{-1})^{\infty}\otimes_E (\Sym^k E^2)^{\vee} \Big(\hooklongrightarrow   (\Ind_{\overline{B}(\Q_p)}^{\GL_2(\Q_p)} \chi_{z_{\alpha}}\delta_B^{-1})^{\an}\Big),
\end{multline*}
with the first map given in the same way as in (\ref{Verma}), and the second map given in the same way as in the discussion below  (\ref{padicL}).
By (\ref{comt01}), $G_{\cV,j}(z_{\alpha})$ is equal to  the evaluation at $1\in E$ of the following composition
\begin{equation*}
  E\cong \chi_{z_{\alpha}} \otimes_E \delta_B^{-1} \xrightarrow{g_{\cV,j}} \text{U}(\ug) \otimes_{\text{U}(\ub)} \cC^{\sm}(N(\Z_p^{\times}), \chi_{z_{\alpha}} \otimes_E \delta_B^{-1}) \\
  \xlongrightarrow{\iota_1^{\pm}\circ i_{z_{\alpha}}^*}  \widetilde{H}^1_{\et,c}(K^p,E)_{\overline{\rho}}^{\an,\pm} \xlongrightarrow{\{\infty\}-\{0\}} E.
\end{equation*}
Consider the following composition (recall $j\leq k$, and the map $h_{\cV,j}$ is given in the same way as in (\ref{Lfunval}))
\begin{multline*}
\mu_{\cV,j}: E\cong \chi_{z_{\alpha}} \otimes_E \delta_B^{-1} \xlongrightarrow{h_{\cV, j}} \cC^{\lp, \leq k}(N(\Z_p^{\times}), \chi_{z_{\alpha}} \otimes_E \delta_B^{-1})\\ \hooklongrightarrow  (\Ind_{\overline{B}(\Q_p)}^{\GL_2(\Q_p)} \chi_{z_{\alpha}}^{\sm}\delta_B^{-1})^{\infty}\otimes_E (\Sym^k E^2)^{\vee} \xlongrightarrow{\iota_{z_{\alpha}}^{\pm}} \widetilde{H}^1_{\et,c}(K^p,E)_{\overline{\rho}}^{\an,\pm} \xlongrightarrow{\{\infty\}-\{0\}} E.
\end{multline*}
By the commutative diagram (\ref{comt01}) (and using the description of (\ref{Verma})),  we see
\begin{equation*}G_{\cV,j}(z_{\alpha})=\Delta_j(z_{\alpha}) \mu_{\cV,j}(1)
\end{equation*}
and hence (by step (b)) $L_{e-1}^{\pm}(z_{\alpha}, 1_{\cV}z^j)=(e-1)! \mu_{\cV,j}(1)$.  In summary, the values of $L_{e-1}^{\pm}(z_{\alpha},-)$ at functions in $\cC^{\lp, \leq k} (\Z_p^{\times},E)$ (in particular, at $\phi x^j$ as in (\ref{equ: clp-int})) are characterized by the composition
\begin{multline*}
  \cC^{\lp, \leq k}(N(\Z_p^{\times}), \chi_{z_{\alpha}} \otimes_E \delta_B^{-1}) \hooklongrightarrow  (\Ind_{\overline{B}(\Q_p)}^{\GL_2(\Q_p)} \chi_{z_{\alpha}}^{\sm}\delta_B^{-1})^{\infty}\otimes_E (\Sym^k E^2)^{\vee} \\ \xlongrightarrow{\iota_{z_{\alpha}}^{\pm}} \widetilde{H}^1_{\et,c}(K^p,E)_{\overline{\rho}}^{\an,\pm} \xlongrightarrow{\{\infty\}-\{0\}} E.
\end{multline*}
Now we are in the same situation as in \cite[(4.14)(4.15)]{Em05}. The proof of \cite[Prop. 4.9]{Em05} carries over verbatim to this setting, and the part (2) follows.
\end{proof}


\begin{thebibliography}{10}

\bibitem{Bel}
Jo{\"e}l Bella{\"\i}che.
\newblock Critical p-adic {L}-functions.
\newblock {\em Inventiones mathematicae}, 189(1):1--60, 2012.

\bibitem{BCh}
Jo{\"e}l Bella{\"i}che and Ga{\"e}tan Chenevier.
\newblock Families of {Galois} representations and {Selmer} groups.
\newblock {\em Ast{\'e}risque}, 324:1--314, 2009.

\bibitem{Bergd0}
John Bergdall.
\newblock Ordinary modular forms and companion points on the eigencurve.
\newblock {\em Journal of Number Theory}, 134:226--239, 2014.

\bibitem{Br}
Christophe Breuil.
\newblock Remarks on some locally $\mathbb{Q}_p$-analytic representations of
  $\mathrm{GL}_2({F})$ in the crystalline case.
\newblock {\em Non-abelian fundamental groups and Iwasawa theory},
  393:212--238, 2010.

\bibitem{Br11b}
Christophe Breuil.
\newblock Correspondance de langlands p-adique, compatibilit{\'e} local-global
  et applications.
\newblock {\em S{\'e}minaire Bourbaki}, 1031:119--147, 2011.

\bibitem{BE}
Christophe Breuil and Matthew Emerton.
\newblock Repr\'esentations $p$-adiques ordinaires de
  $\mathrm{GL}_2(\mathbb{Q}_p)$ et compatibilit\'e local-global.
\newblock {\em Ast\'erisque}, 331:255--315, 2010.

\bibitem{BHS2}
Christophe Breuil, Eugen Hellmann, and Benjamin Schraen.
\newblock Smoothness and classicality on eigenvarieties.
\newblock {\em Inventiones mathematicae}, 209(1):197--274, 2017.

\bibitem{BHS1}
Christophe Breuil, Eugen Hellmann, and Benjamin Schraen.
\newblock Une interpr{\'e}tation modulaire de la vari{\'e}t{\'e} trianguline.
\newblock {\em Mathematische Annalen}, 367(3-4):1587--1645, 2017.

\bibitem{Che}
Ga{\"e}tan Chenevier.
\newblock Familles $p$-adiques de formes automorphes pour $\mathrm{GL}_n$.
\newblock {\em J. reine angew. Math}, 570:143--217, 2004.

\bibitem{Che05}
Ga{\"e}tan Chenevier.
\newblock Une correspondance de {Jacquet}-{Langlands} $p$-adique.
\newblock {\em Duke Mathematical Journal}, 126(1):161--194, 2005.

\bibitem{Che11}
Ga{\"e}tan Chenevier.
\newblock On the infinite fern of {Galois} representations of unitary type.
\newblock {\em Ann. Sci. {\'E}c. Norm. Sup{\'e}r.(4)}, 44(6):963--1019, 2011.

\bibitem{Colm2}
Pierre Colmez.
\newblock Repr{\'e}sentations triangulines de dimension $2$.
\newblock {\em Ast{\'e}risque}, 319:213--258, 2008.

\bibitem{Ding3}
Yiwen Ding.
\newblock $\mathcal{L}$-invariants and local-global compatibility for
  $\mathrm{GL}_2/{F}$.
\newblock {\em Forum of Mathematics, Sigma}, 4:49 p., 2016.

\bibitem{Ding}
Yiwen Ding.
\newblock Formes modulaires $p$-adiques sur les courbes de {Shimura} unitaires
  et compatibilit\'e local-global.
\newblock {\em M\'emoires de la SMF}, 155, 2017.

\bibitem{Em05}
Matthew Emerton.
\newblock $p$-adic {$L$}-functions and unitary completions of representations
  of $p$-adic reductive groups.
\newblock {\em Duke mathematical journal}, 130:353--392, 2005.

\bibitem{Em11}
Matthew Emerton.
\newblock Jacquet modules of locally analytic representations of $p$-adic
  reductive groups {I}. {Construction} and first properties.
\newblock {\em Annales scientifiques de l'{\'E}cole normale sup{\'e}rieure},
  39(5):775--839, 2006.

\bibitem{Em1}
Matthew Emerton.
\newblock On the interpolation of systems of eigenvalues attached to
  automorphic {Hecke} eigenforms.
\newblock {\em Inventiones mathematicae}, 164:1--84, 2006.

\bibitem{Em2}
Matthew Emerton.
\newblock Jacquet modules of locally analytic representations of $p$-adic
  reductive groups {II}. {The} relation to parabolic induction.
\newblock 2007.
\newblock to appear in \emph{J. Institut Math. Jussieu}.

\bibitem{Em07}
Matthew Emerton.
\newblock Locally analytic representation theory of $p$-adic reductive groups:
  A summary of some recent developments.
\newblock {\em London mathematical society lecture note series}, 320:407, 2007.

\bibitem{Em4}
Matthew Emerton.
\newblock Local-global compatibility in the $p$-adic {Langlands} programme for
  $\mathrm{GL}_2/\mathbb{Q}$.
\newblock 2011.
\newblock preprint.

\bibitem{Em04}
Matthew Emerton.
\newblock Locally analytic vectors in representations of locally $p$-adic
  analytic groups.
\newblock {\em Memoirs of the Amer. Math. Soc.}, 248(1175), 2017.

\bibitem{EGAiv1}
Alexander Grothendieck and Jean Dieudonn\'e.
\newblock {\'E}l\'ements de g\'eom\'etrie alg\'ebrique iv: \'etude locale des
  sch\'emas et des morphismes de sch\'emas (premi\`ere partie).
\newblock {\em Pub. Math. I.H.\'E.S.}, 20:5--259, 1964.

\bibitem{KPX}
Kiran~S Kedlaya, Jonathan Pottharst, and Liang Xiao.
\newblock Cohomology of arithmetic families of {$(\varphi, \Gamma)$}-modules.
\newblock {\em Journal of the American Mathematical Society}, 27(4):1043--1115,
  2014.

\bibitem{Kis03}
Mark Kisin.
\newblock Overconvergent modular forms and the {Fontaine}-{Mazur} conjecture.
\newblock {\em Inventiones mathematicae}, 153(2):373--454, 2003.

\bibitem{LLZ}
Antonio Lei, David Loeffler, and Sarah~Livia Zerbes.
\newblock Critical slope $p$-adic {L}-functions of {CM} modular forms.
\newblock {\em Israel Journal of Mathematics}, 198(1):261--282, 2013.

\bibitem{Liu}
Ruochuan Liu.
\newblock Triangulation of refined families.
\newblock {\em Commentarii Mathematici Helvetici}, 90(4):831--904, 2015.

\bibitem{MTT}
Barry Mazur, John Tate, and Jeremy Teitelbaum.
\newblock On $p$-adic analogues of the conjectures of {Birch} and
  {Swinnerton}-{Dyer}.
\newblock {\em Inventiones mathematicae}, 84(1):1--48, 1986.

\bibitem{PSc}
Robert Pollack and Glenn Stevens.
\newblock Critical slope $p$-adic {L}-functions.
\newblock {\em Journal of the London Mathematical Society}, 87(2):428--452,
  2012.

\end{thebibliography}
\end{document}